\documentclass[a4paper,twoside,reqno]{amsart}
\usepackage{amssymb}
\usepackage{amsmath}
\usepackage[mathscr]{eucal}
\usepackage{hyperref}
\usepackage{graphicx}
\usepackage[all]{xy}
\usepackage{tikz}
\usepackage{tikz-cd} 
\usepackage{multirow}
\usepackage{booktabs}
\usepackage{float}
\usepackage{placeins}
\usepackage[capitalise]{cleveref}
\usepackage{enumitem}

\crefformat{equation}{equation~(#1)}

\theoremstyle{plain}
\newtheorem{prop}{Proposition}[section]
\newtheorem{thm}[prop]{Theorem}
\newtheorem{cor}[prop]{Corollary}
\newtheorem{lem}[prop]{Lemma}
\newtheorem{conj}[prop]{Conjecture}

\newtheorem*{thm*}{Theorem}

\theoremstyle{definition}
\newtheorem{dfn}[prop]{Definition}
\newtheorem{rem}[prop]{Remark}
\newtheorem{example}[prop]{Example}
\newtheorem{lab}[prop]{}

\newcommand{\To}{\Rightarrow}

\newcommand{\C}{{\mathbb{C}}}
\renewcommand{\P}{{\mathbb{P}}}
\newcommand{\R}{{\mathbb{R}}}
\newcommand{\N}{{\mathbb{N}}}
\newcommand{\Z}{{\mathbb{Z}}}

\DeclareMathOperator{\im}{im}

\DeclareMathOperator{\interior}{int}

\DeclareMathOperator{\rk}{rk}

\DeclareMathOperator{\gl}{GL}

\DeclareMathOperator{\gram}{Gram}

\DeclareMathOperator{\spn}{span}
\DeclareMathOperator{\codim}{codim}
\DeclareMathOperator{\initial}{in}
\DeclareMathOperator{\gin}{gin}

\newcommand{\du}{{\scriptscriptstyle\vee}}

\newcommand{\ol}{\overline}

\newcommand{\all}{\forall\,}

\renewcommand{\subset}{\subseteq}
\renewcommand{\supset}{\supseteq}
\newcommand{\bil}[2]{\langle{#1},{#2}\rangle}
\newcommand{\mm}{\mathfrak{m}}
\newcommand{\sz}{\mathcal{Z}}

\newcommand{\id}[1]{\langle #1 \rangle}

\newcommand{\bpf}{\bp-free}
\newcommand{\bp}{base-point}
\newcommand{\gs}{Gram spectrahedron}
\newcommand{\gsa}{Gram spectrahedra}
\newcommand{\cc}{change of coordinates}
\newcommand{\nsingular}{non\--singular}

\renewcommand{\emptyset}{\varnothing}

\renewcommand{\epsilon}{\varepsilon}
\renewcommand{\theta}{\vartheta}

\newcommand\mapsfrom{\mathrel{\reflectbox{\ensuremath{\mapsto}}}}
\newcommand{\todfn}[1]{\textit{#1}}


\newcommand{\ra}[1]{\renewcommand{\arraystretch}{#1}}

\setcounter{MaxMatrixCols}{20}


\author{Julian Vill}
\title{dimensions of faces of gram spectrahedra}

\begin{document}

\begin{abstract}
Let $f\in\Sigma_{n,2d}$ be a sum of squares. The \gs\ of $f$ is a compact, convex set that parametrizes all sum of squares representations of $f$. Let $F\subset\gram(f)$ be a face of its \gs. We are interested in upper bounds for the dimension of $F$.
We show that this upper bound can be determined combinatorially. As it turns out, if the degree is large enough, a face realizing this bound, is a face of a \gs\ such that the form $f$ is singular. Thus we are also interested in finding better bounds whenever the form $f$ is smooth.
\end{abstract}

\maketitle

\section{Introduction}

We always assume that $d,\, n\ge 2$. We write $\R[x_1,\dots,x_n]_d$ for the vector space of homogeneous polynomials of degree $d$ and $\Sigma_{n,2d}\subset\R[x_1,\dots,x_n]_{2d}$ for the cone of sums of squares of degree $2d$. Let $f\in\Sigma_{n,2d}$ be a sum of squares (sos), e.g. there exist $p_1,\dots,p_r\in\R[x_1,\dots,x_n]_d$ such that $f=\sum_{i=1}^r p_i^2$. On the one hand one associates to this representation a Gram matrix $G$ such that $f=X^TGX=(p_1,\dots,p_r)(p_1,\dots,p_r)^T$ where $X$ is the vector containing all monomials of degree $d$, on the other hand, every positive semidefinite (psd) Gram matrix gives rise to a sum of squares representation of $f$, since every psd matrix $G$ can be decomposed as $G=H^TH$. Thus the set of all psd Gram matrices parametrizes the sum of squares representations of $f$ (up to orthogonal equivalence). The set of such matrices $G$ where $G$ is positive semidefinite is called the \gs\ $\gram(f)$ of $f$. It is by definition a spectrahedron, that is, the intersection of an affine-linear subspace with the cone of psd matrices. In this case the subspace is given by the linear conditions imposed on the matrix by the equation $f=X^TGX$.

Let $f\in\Sigma_{n,2d}$ and $\gram(f)$ its \gs. For any face $F$ of $\gram(f)$ any relative interior point has the same rank, hence it is well-defined to call this the rank of the face $F$.
For a Gram matrix the rank is the length of the corresponding sum of squares representation of $f$.
For some fixed rank $r$ we are interested in possible dimensions of a face of this rank (for fixed $n,d$).
It turns out that this is a purely algebraic question: Let $G$ be a Gram matrix in the relative interior of $F$ and write $U\subset\R[x_1,\dots,x_n]_d$ for the subspace spanned by the entries of $GX$. If $G$ corresponds to a representation $f=\sum_{i=1}^r p_i^2$, then $U$ is also given by the span of the $p_i$, hence especially $\dim U=\rk G$. 
Then the dimension of $F$ is given by the dimension of the kernel of the multiplication map $\mathsf{S}^2U\to \R[x_1,\dots,x_n],\, p\otimes q\mapsto pq$. Here $\mathsf{S}^2U$ is the second symmetric power of $U$, which we understand as all tensors in $U\otimes U$ that are invariant under permutation of the factors.
We are thus mainly interested in understanding the dimension of the image of this map which we denote by $U^2$, the subspace spanned by all product $pq$ where $p,\,q\in U$.

We write $R:=\R[x_1,\dots,x_n]$ and omit the $n$ in the notation. Furthermore it turns out to be more convenient to talk about the codimensions of $U$ and $U^2$ (in $R_d$ and $R_{2d}$ respectively).

In \cref{sec:Strongly stable subspaces} we determine upper bounds for $\codim U^2$ for fixed $n,d$ and $k$ where $k$ is the codimension of $U$. Although the bounds are tight, the subspaces realizing these bounds will usually have a \bp. This means that there exists a point in $\C^n$ such that every element in $U$ vanishes at that point. Especially these correspond to faces of \gsa\ of forms $f$ that are singular at some point.

We therefore consider subspaces that are \bpf\ to find better bounds for \nsingular\ sos forms, since this shows how \gsa\ look like for "generic" $f\in\Sigma_{n,2d}$.

In \cref{sec:Subspaces of codimension 1 and 2} we show that the bounds for $\codim U^2$ we determined in \cref{sec:Strongly stable subspaces} are far from optimal if the codimension of $U$ is 1 or 2 and $U$ is \bpf. In these cases we find uniform bounds for $\codim U^2$, not depending on $n$ or $d$, whereas the bounds in \cref{sec:Strongly stable subspaces} do always depend on $n$. Since we make use of some commutative algebra in this section as well as later on, we introduce Theorems of Macaulay, Gotzmann and Green in \cref{sec:Some commutative algebra}.

In \cref{sec:arbitrary codimension} we look at the same problem whenever $U\subset R_d$ is \bpf\ and has small codimension $k$ compared to $d$. The main result we show is that whenever $k\le d-1$, there is a uniform bound for $\codim U^2$, not depending on $n$ and $d$, as in the case of codimension $1$ and $2$. 
This is done via an inductive argument, where we reduce the degree and increase or decrease the number of variables, mainly using Green's Theorem. The degree reduction is done in \cref{sec:Reducing the degree}, the increasing of the variables is carried out in \cref{sec:Lifting subspaces} and the rest in done in \cref{sec:arbitrary codimension} itself.

\begin{thm*}[\cref{thm:independent_bound}]
Let $k\le d-1$ be a positive integer. Then for every $n\ge 2$ and every \bpf\ subspace $U\subset \R[x_1,\dots,x_n]_d$ of codimension $k$ we have 
\[
\codim U^2\le C(k),
\]
where $C(k)$ is a constant independent of $n$ and $d$.
\end{thm*}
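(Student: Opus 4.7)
The proof is by induction, with base cases provided by \cref{sec:Subspaces of codimension 1 and 2} (uniform bounds for codimension $1$ and $2$) and by a direct Macaulay--Gotzmann estimate from \cref{sec:Some commutative algebra} when $d$ is close to its minimum value $k+1$. The inductive step reduces $d$ by one while keeping the relevant codimension bounded in terms of $k$.

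Fix $U\subset \R[x_1,\dots,x_n]_d$ that is \bpf\ of codimension $k$ with $d\ge k+2$. I pick a sufficiently generic linear form $\ell$ and pass to two associated subspaces in degree $d-1$: the restriction $U_\ell\subset \R[x_1,\dots,x_{n-1}]_{d-1}$ to the hyperplane $\{\ell=0\}$, and the colon subspace $U:\ell := \{g\in R_{d-1} : \ell g\in U\}\subset R_{d-1}$. Green's theorem (\cref{sec:Some commutative algebra}) bounds both $\codim U_\ell$ and $\codim(U:\ell)$ by an explicit function of $k$. The degree-reduction identities of \cref{sec:Reducing the degree} then bound $\codim U^2$ by $\codim U_\ell^2$ plus $\codim\bigl(U\cdot(U:\ell)\bigr)$ plus an error depending only on $k$; the second of these is controlled via the lifting techniques of \cref{sec:Lifting subspaces}. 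Applying the induction hypothesis to $U_\ell$, which has codimension at most some $k'$ bounded by a function of $k$ in degree $d-1$ and thus still satisfies $k'\le (d-1)-1$, completes the recursion.

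The main obstacle is that the restriction $U_\ell$ of a \bpf\ subspace need not be \bpf, so the induction hypothesis is not directly available. The decisive point, and the reason for the hypothesis $k\le d-1$, is that any base point of $U_\ell$ in the hyperplane $\{\ell=0\}$ would impose enough additional linear conditions on $U_\ell$ to contradict Green's bound on $\codim U_\ell$ for generic $\ell$. When base-point-freeness of $U_\ell$ nevertheless fails at the boundary, the lifting results of \cref{sec:Lifting subspaces} allow one to pass to a larger polynomial ring where the condition is restored, at the cost of an additive term depending only on $k$. Tracking these constants through the recursion yields a closed-form bound $C(k)$ independent of both $n$ and $d$.
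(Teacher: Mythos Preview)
Your proposal does not match the paper's argument, and it has genuine gaps.

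\medskip

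\textbf{What the paper actually does.} The paper does \emph{not} run an induction on $d$ within the class of \bpf\ subspaces. Instead it first reduces the number of variables to a fixed value $3k$, and only afterwards invokes the degree reduction of \cref{sec:Reducing the degree}. Concretely: for $n>3k$ one applies a generic change of coordinates and sets $V:=U\cap A(3k)_d$. \cref{cor:dim_intersection} gives $\codim V=k$, and the crucial \cref{prop:stay_bpf} (which needs $n\ge 3k+1$) shows that $V$ is still \bpf. Then \cref{cor:reduction_number_of_variables} (which uses $k\le d-1$ via \cref{cor:degree2d-1}) yields $\codim U^2\le\codim V^2\le m(3k,d,k)$, and finally \cref{cor:ss_bound_independent_of_d} gives $m(3k,d,k)\le m(3k,k,k)$. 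The case $n\le 3k$ is handled by lifting (\cref{prop:liftingfaces}), which preserves $\codim U^2$ because $h_{2d-1}=0$.

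\medskip

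\textbf{Where your argument breaks.}
First, the restriction $U_\ell$ to the hyperplane $\{\ell=0\}$ is a subspace of forms of degree $d$ in $n-1$ variables, not of degree $d-1$; so your ``degree goes down'' step via $U_\ell$ is misformulated. The object that lives in degree $d-1$ is $(U:\ell)$, and indeed \cref{thm:deg_reduction} gives $\codim U^2\le\codim (U:\ell)^2$ when $k\le d-1$. But the paper explicitly warns (in the remark following \cref{thm:deg_reduction}) that $(U:\ell)$ need \emph{not} be \bpf\ even for generic $\ell$, and gives a concrete counterexample. Thus you cannot feed $(U:\ell)$ back into the induction hypothesis, and your suggested fix (``any base point \dots would contradict Green's bound'') is exactly what fails in that example.

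Second, your base case is not uniform in $n$. A ``direct Macaulay--Gotzmann estimate'' at $d=k+1$ bounds $\codim U^2$ by some $m(n,k+1,k)$, which grows with $n$; eliminating the $n$-dependence is the entire content of the theorem. The paper achieves this by first pinning $n$ to $3k$ via \cref{prop:stay_bpf}, a step your outline does not contain. The lifting of \cref{sec:Lifting subspaces} increases $n$ while preserving $\codim U^2$; it cannot by itself remove an $n$-dependent bound, and it does not restore \bpf ness of a colon space as you suggest.

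Finally, the additive decomposition you attribute to \cref{sec:Reducing the degree} (``$\codim U^2$ bounded by $\codim U_\ell^2$ plus $\codim(U\cdot(U:\ell))$ plus an error'') is not what \cref{thm:deg_reduction} says; that theorem gives the single inequality $\codim U^2\le\codim (U:\ell)^2$, with no hyperplane-restriction term and no additive error.
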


We will see that $C(k)$ can be calculated combinatorially for any fixed $k$. It seems likely that it grows approximately as $k^3$, however as discussed in \cref{rem:growth_bound}, we can not proof this. This could possibly be done in some future work.

\section{Preliminaries}

For handling \gsa\ we will use the coordinate-free approach of \cite{scheiderer2018}. To this end we summarize the most important parts here.

Let $R:=\R[x_1,\dots,x_n]$ and let $\mathsf{S}^2R$ be the algebra of symmetric tensors. Let $\theta\in \mathsf{S}^2R_d$ and write $\theta=\sum_{i=1}^r p_i\otimes q_i$, then $\theta$ defines a symmetric bilinear map $b_\theta\colon R_d^\du\times R_d^\du\to \R$ via $b_\theta(\lambda,\mu)\mapsto\sum_{i=1}^r \lambda(p_i)\mu(q_i)$. We say that $\theta$ is psd if $b_\theta(\lambda,\lambda)\ge 0$ for all $\lambda\in R_d^\du$. Choose any representation $\theta=\sum_{i=1}^r p_i\otimes q_i$ where the sets $(p_i)_i$ and $(q_i)_i$ are linearly independent, then $\im(\theta):=\spn(p_i\colon i=1,\dots,r)=\spn(q_i\colon i=1,\dots,r)$. One possibility is to write $\theta=\sum_{i=1}^r \pm p_i\otimes p_i$ for linearly independent forms $p_i$, this follows immediately from the fact that symmetric matrices can be diagonalized. We also set $\rk(\theta):=\dim \im(\theta)$, called the \todfn{rank} of $\theta$.

With this notation the Gram map is given by 
\[
\mu\colon \mathsf{S}^2R_d\to R_{2d},\, p\otimes q\mapsto pq,
\]
and the \todfn{\gs} of $f\in R_{2d}$ is defined as
\[
\gram(f)=\mu^{-1}(f)\cap S_+^2R_d
\]
where $S_+^2R_d$ denotes the cone of all positive semidefinite tensors in $\mathsf{S}^2R_d$. Fixing a basis of $R_d$ identifies elements in $S_+^2R_d$ with the cone of psd matrices of size $\binom{n-1+d}{d}$, every tensor in $\gram(f)$ with a Gram matrix and $\gram(f)$ as defined above with the usual \gs.

Let $f\in\Sigma_{n,2d}$ and let $F\subset \gram(f)$ be a non-empty face and let $\eta$ be a relative interior point of $F$. Denote by
\[
\mathcal U(F):=\sum_{\theta\in F} \im(\theta) = \im(\eta)
\]
the \todfn{subspace corresponding to $F$}. Since the rank of any interior point of $F$ is the same we can define the rank of $F$ as $\rk(F):=\rk(\eta)=\dim\,\mathcal U(F)$. On the other hand, for any subspace $U\subset R_d$ let
\[
\mathcal F(U):=\{\theta\in\gram(f)\colon \im(\theta)\subset U\}
\]
denote the \todfn{face corresponding to $U$} (on $\gram(f)$). We call a subspace $U\subset R_d$ \todfn{$f$-facial}, if there exists a basis $p_1,\dots,p_r$ of $U$ such that $f=\sum_{i=1}^r p_i^2$.

\begin{prop}[{\cite[Proposition 2.10.]{scheiderer2018}}]
Let $f\in\Sigma_{n,2d}$. Then there is a bijection between non-empty faces of its \gs\ $\gram(f)$ and $f$-facial subspaces of $R_d$ given by
\begin{align*}
F &\mapsto \mathcal U(F)\\
\mathcal F(U) &\mapsfrom U.
\end{align*}
\end{prop}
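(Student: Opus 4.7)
The plan is to verify well-definedness of both maps and then establish the two composition identities. For the first map, given an $f$-facial $U$ with basis $p_1,\dots,p_r$ realizing $f=\sum_i p_i^2$, the tensor $\eta_0:=\sum_i p_i\otimes p_i$ is psd with $\mu(\eta_0)=f$ and $\im(\eta_0)=U$, so $\eta_0\in\mathcal{F}(U)$. The set $\{\theta\in S_+^2R_d:\im(\theta)\subset U\}$ is a face of the psd cone by the standard description of faces of $S_+^2R_d$, and intersecting with the affine slice $\mu^{-1}(f)$ yields a face of $\gram(f)$. For the second map, if $\eta$ is in the relative interior of $F$ and $\eta=\sum_{i=1}^r p_i\otimes p_i$ is a diagonal decomposition with linearly independent $p_i$, then $f=\mu(\eta)=\sum_i p_i^2$ and $p_1,\dots,p_r$ is a basis of $\im(\eta)=\mathcal{U}(F)$, showing $\mathcal{U}(F)$ is $f$-facial. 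The identity $\mathcal{U}(\mathcal{F}(U))=U$ is then clear: one inclusion is immediate from the definition of $\mathcal{F}$, and the other follows because $\eta_0\in\mathcal{F}(U)$ has image exactly $U$.

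For $\mathcal{F}(\mathcal{U}(F))=F$, fix $\eta$ in the relative interior of $F$, so $\mathcal{U}(F)=\im(\eta)$. The inclusion $F\subset\mathcal{F}(\mathcal{U}(F))$ follows from additivity of images for psd tensors: given $\theta\in F$, relative interiority lets us write $\eta=\lambda\theta+(1-\lambda)\theta'$ with $\theta'\in F$ and $\lambda\in(0,1)$, whence $\im(\theta)\subset\im(\theta)+\im(\theta')=\im(\eta)$. The reverse inclusion is the heart of the proof. Given $\theta\in\gram(f)$ with $\im(\theta)\subset\im(\eta)$, consider the perturbation
\[
\eta_\epsilon:=(1+\epsilon)\eta-\epsilon\theta.
\]
Because $\eta$ is strictly positive on the quotient $R_d^\du/\mathrm{ann}(\im(\eta))$ and $\theta$ also factors through that quotient (its annihilator contains that of $\eta$), $\eta_\epsilon$ remains psd for all sufficiently small $\epsilon>0$; linearity of $\mu$ gives $\mu(\eta_\epsilon)=f$, hence $\eta_\epsilon\in\gram(f)$. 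Rearranging yields
\[
\eta=\frac{1}{1+\epsilon}\eta_\epsilon+\frac{\epsilon}{1+\epsilon}\theta,
\]
so $\eta\in F$ sits in the open segment from $\eta_\epsilon\in\gram(f)$ to $\theta\in\gram(f)$; the face property of $F$ forces both endpoints into $F$, and in particular $\theta\in F$.

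The main obstacle is this perturbation step: one needs to know that the containment $\im(\theta)\subset\im(\eta)$ of psd tensors lets one subtract a small positive multiple of $\theta$ from $(1+\epsilon)\eta$ while retaining positivity. This is precisely the mechanism that transports the combinatorial face structure of the psd cone $S_+^2R_d$ to the spectrahedral slice $\gram(f)$, and is what makes the correspondence a bijection.
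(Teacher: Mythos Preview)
The paper does not supply a proof of this proposition; it is quoted verbatim from \cite{scheiderer2018} with only the citation, so there is nothing in the paper to compare against. Your argument is correct and is essentially the standard proof of this fact: the faces of the psd cone $S_+^2R_d$ are exactly the sets $\{\theta\ \text{psd}:\im(\theta)\subset U\}$, and the perturbation $\eta_\epsilon=(1+\epsilon)\eta-\epsilon\theta$ is precisely the device that shows a face of the psd cone, when intersected with the Gram slice, yields a face of $\gram(f)$ with no enlargement. One cosmetic remark: for the inclusion $F\subset\mathcal F(\mathcal U(F))$ you do not need the segment argument at all, since by the paper's own definition $\mathcal U(F)=\sum_{\theta\in F}\im(\theta)$, so $\im(\theta)\subset\mathcal U(F)$ is immediate for every $\theta\in F$.
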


\begin{prop}[{\cite[Proposition 3.6.]{scheiderer2018}}]
Let $f\in\Sigma_{n,2d}$ and let $F\subset \gram(f)$ be a face with corresponding subspace $U:=\mathcal U(F)$ then
\[
\dim F=\dim \ker(\mu|_{\mathsf{S}^2U}\colon \mathsf{S}^2U\to U^2)
\]
where $U^2=\spn(pq\colon p,q\in U)$.
\end{prop}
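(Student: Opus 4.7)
The plan is to identify the affine hull of $F$ explicitly as a translate of $\ker(\mu|_{\mathsf{S}^2U})$. By the preceding proposition, $F=\mathcal F(U)=\{\theta\in\gram(f)\colon\im(\theta)\subset U\}$. If $\im(\theta)\subset U$, then writing $\theta=\sum_i\pm p_i\otimes p_i$ with the $p_i$ a linearly independent family in $\im(\theta)\subset U$ shows $\theta\in\mathsf{S}^2U$. Therefore $F\subset\mathsf{S}^2U$, and for two elements $\theta,\eta\in F$ one has $\theta-\eta\in\mathsf{S}^2U$ and $\mu(\theta-\eta)=f-f=0$. Hence the affine hull of $F$ is contained in $\eta+\ker(\mu|_{\mathsf{S}^2U})$ for any fixed $\eta\in F$.

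For the reverse inclusion, fix a relative interior point $\eta$ of $F$, so that $\im(\eta)=\mathcal U(F)=U$ and $\rk(\eta)=\dim U$. Writing $\eta=\sum_{i=1}^r p_i\otimes p_i$ with $p_1,\dots,p_r$ a basis of $U$ (all signs are positive since $\eta$ is psd), the associated bilinear form satisfies $b_\eta(\lambda,\lambda)=\sum_i\lambda(p_i)^2$, so $b_\eta$ is positive definite on $R_d^\du/U^\perp$ and its radical equals $U^\perp$.

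Now take any $\psi\in\ker(\mu|_{\mathsf{S}^2U})$. Since $\im(\psi)\subset U$, the bilinear form $b_\psi$ vanishes on $U^\perp$, so $b_{\eta+t\psi}$ and $b_\eta$ have the same radical $U^\perp$ for every real $t$, and on any complement of $U^\perp$ the form $b_\eta$ is strictly positive definite. By continuity, for $|t|$ small enough the perturbation $\eta+t\psi$ remains positive semidefinite; moreover $\mu(\eta+t\psi)=f$ and $\im(\eta+t\psi)\subset U$, so $\eta+t\psi\in F$. Thus a neighborhood of $\eta$ in $\eta+\ker(\mu|_{\mathsf{S}^2U})$ lies in $F$, which forces the affine hull of $F$ to contain $\eta+\ker(\mu|_{\mathsf{S}^2U})$.

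Combining the two inclusions, the affine hull of $F$ equals $\eta+\ker(\mu|_{\mathsf{S}^2U})$, and therefore $\dim F=\dim\ker(\mu|_{\mathsf{S}^2U})$. The only substantial point is the perturbation argument in the second paragraph; everything else follows formally from the previous proposition and from unpacking the definitions of $\mathsf{S}^2U$ and of the rank of a psd tensor.
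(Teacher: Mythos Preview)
The paper does not supply its own proof of this proposition; it is quoted from \cite{scheiderer2018} and used as a black box. Your argument is correct and is essentially the standard one: identify the affine span of $F$ with $\eta+\ker(\mu|_{\mathsf{S}^2U})$ by showing both inclusions, the nontrivial one via a perturbation of a relative interior point.

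One small imprecision: the assertion that ``$b_{\eta+t\psi}$ and $b_\eta$ have the same radical $U^\perp$ for \emph{every} real $t$'' is false in general (for special values of $t$ the rank of $\eta+t\psi$ can drop below $\dim U$, enlarging the radical). What is true, and what you actually use, is that $U^\perp$ lies in the radical of $b_{\eta+t\psi}$ for all $t$, and that on a fixed complement of $U^\perp$ the form $b_\eta$ is positive definite, hence $b_{\eta+t\psi}$ remains positive definite there for $|t|$ small. That is enough to conclude $\eta+t\psi\in S_+^2R_d$, and the rest of your argument goes through unchanged.
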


Especially the dimension of the face can be calculated purely algebraically. Rewriting the dimension we get
\[
\dim F=\binom{\dim U+1}{2}-\dim R_{2d}+\codim U^2.
\]
If we are working with a fixed number of variables and a fixed degree, the dimension of a face therefore only depends on the dimension of $U$ and the dimension of $U^2$.

For the rest of the paper we thus focus on determining $\codim U^2$ for appropriate subspaces $U$. Since $\codim U^2$ is invariant under field extensions, we can assume that our ground field is algebraically closed, henceforth we will work over $\C$.
By $A$ we always denote $\C[x_1,\dots,x_n]$. If we want to emphasize the number of variables we also write $A(n)$.

Let $U\subset A_d$ be a subspace. For any subset $S\subset A$ we write $\sz(S):=\{\xi\in\P^{n-1}\colon f(\xi)=0\quad \all f\in S\}$ for the zero set of $S$ in $\P^{n-1}$ where $\P^{n-1}$ is the complex projective space. 
We say that $U$ is \todfn{\bpf} if $\sz(U)=\emptyset$.

If $\succeq$ is a monomial ordering and $I\subset A$ is a homogeneous ideal, let $\initial_\succeq(I)$ be the \todfn{initial ideal} of $I$. Whenever we talk about the initial ideal $\id{U}$, we also write $\initial_\succeq(U)$ instead of $\initial_\succeq(\id{U})$, where $\id{U}$ is the ideal generated by $U$ in $A$.

Most claims do hold for every monomial ordering or at least any elimination ordering, however for simplicity, if not explicitly stated otherwise, we always work with the lexicographic-ordering such that $x_1<\dots <x_n$, and also write $\initial(I)$ instead of $\initial_\succeq(I)$. 

Lastly it is necessary not only to talk about subspaces but also about their orthogonal complements. We will always do this wrt the apolarity pairing:

\begin{dfn}
For $i=1,\dots,n$ define the differential operator $\partial_i:=\frac{\partial}{\partial x_i}$ and $\partial:=(\partial_1,\dots,\partial_n)$, $\partial^\alpha=\partial^{\alpha_1}\cdot\dots\cdot\partial^{\alpha_n}$ for $\alpha\in\Z^n_+$. For $f=\sum_\alpha c_\alpha x^\alpha\in A$ define $f(\partial):=\sum_\alpha c_\alpha \partial^\alpha$. For every $m\ge 0$ we then have the following bilinear form on $A_m$:
\[
\bil{f}{g}:=\frac{1}{m!}f(\partial)(g)=\frac{1}{m!}g(\partial)(f),\ \all f,g\in A_m.
\]
This bilinear form is called the \todfn{apolarity pairing}. It is a perfect pairing and in the real case a scalar product. Furthermore for every $u\in\C^n$ and $l:=\sum_{i=1}^n u_ix_i\in A_1$ and $f\in A_m$ we have $\bil{l^m}{f}=f(u)$. (See for example \cite[Lemma 1.15]{ika1999})
\end{dfn}

\section{Strongly stable subspaces}
\label{sec:Strongly stable subspaces}

For fixed $n,d,k$ and subspaces $U\subset A_d$ of codimension $k$ we determine an upper bound for $\codim U^2$. The bound can always be realized by a monomial subspace, meaning that there exists a basis consisting of monomials, and this subspace can even be chosen to be strongly stable. Especially this bound is tight and can be computed combinatorially. Everything in this section is true for any monomial ordering, however as mentioned earlier we will fix the lex-ordering for convenience and omit it in the notation.

\begin{dfn}
Let $U\subset A_d$ be a monomial subspace. $U$ is called \todfn{strongly stable} if for every $1 \le i \le n$ the following holds:
\[
\all m\in U \text{ monomial}: \left(x_i|m \To \all j>i: x_j\frac{m}{x_i}\in U\right).
\]
\end{dfn}

Note that for every monomial ordering $\succeq$ where $x_1<\dots<x_n$ and every monomial $m$ such that $x_i|m$ we have $x_j\frac{m}{x_i}\succeq m$ for every $j>i$.

\begin{prop}[{\cite[Theorem 15.18]{eisenbud1995}}]
\label{prop:gin_existence}
Let $I\subset A$ be a homogeneous ideal. Then there exists a Zariski-open subset $V\subset \gl_n(\C)$ such that for every $G_1,G_2\in V$ the initial ideals satisfy $\initial(G_1I)=\initial(G_2I)$ where $G_iI:=\{p(G_i^{-1}x)\colon p\in I\}$ is the ideal $I$ is mapped to by the coordinate change $G_i$ for $i=1,2$.
\end{prop}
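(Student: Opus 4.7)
The plan is to analyze $\initial(GI)$ one graded piece at a time and then assemble the pieces using a uniform bound on the generating degrees of the initial ideal. For each $d\ge 0$ consider the morphism $\phi_d\colon\gl_n(\C)\to\grass(r_d,A_d)$ sending $G\mapsto(GI)_d$, where $r_d:=\dim_\C I_d$, and write $L_d(G)\subset A_d$ for the monomial subspace spanned by the leading monomials (in lex order) of the elements of $(GI)_d$. Then $L_d(G)$ is precisely the degree-$d$ graded piece of $\initial(GI)$. The goal is to produce, for each $d$, a Zariski open dense $V_d\subset\gl_n(\C)$ on which $L_d$ is constant, and then to show that a finite intersection of such $V_d$ already forces $\initial(GI)$ itself to be constant.

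For the per-degree step, fix $d$ and observe that for each $r_d$-dimensional monomial subspace $M\subset A_d$ the locus $C_M:=\{W\in\grass(r_d,A_d):L(W)=M\}$, where $L(W)$ denotes the leading-monomial subspace of $W$, is a Schubert cell, i.e.\ a locally closed subvariety determined by the pivot positions of the reduced basis of $W$ in lex order. These cells partition the Grassmannian and there are only finitely many of them. The image $\phi_d(\gl_n(\C))$ is constructible and irreducible, so its Zariski closure $Y_d$ meets a unique cell $C_{M_d}$ in a dense open subset of $Y_d$. Hence $V_d:=\phi_d^{-1}(C_{M_d})$ is the complement in $\gl_n(\C)$ of the closed subset $\phi_d^{-1}(Y_d\setminus C_{M_d})$, and so it is Zariski open; since $\phi_d(V_d)\ne\emptyset$ and $\gl_n(\C)$ is irreducible, $V_d$ is also dense. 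On $V_d$ the subspace $L_d(G)=M_d$ is constant.

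The remaining step is to glue. Gröbner degeneration is flat, so $\initial(GI)$ has the same Hilbert function as $I$ and is in particular independent of $G$. Gotzmann's regularity theorem supplies an integer $N$ bounding the Castelnuovo--Mumford regularity of every homogeneous ideal with this Hilbert function; hence $\initial(GI)$ is generated by its graded pieces in degrees $\le N$ for every $G$, and those pieces determine the full ideal. Setting $V:=V_0\cap V_1\cap\cdots\cap V_N$, a finite intersection of Zariski open dense subsets of the irreducible variety $\gl_n(\C)$, yields a Zariski open dense $V\subset\gl_n(\C)$. For any $G_1,G_2\in V$ the degree-$d$ pieces of $\initial(G_1I)$ and $\initial(G_2I)$ agree for all $d\le N$, and the regularity bound then forces equality in every degree, i.e.\ $\initial(G_1I)=\initial(G_2I)$.

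The main obstacle is precisely this final gluing. The per-degree argument produces an infinite sequence of open dense subsets $V_d$, and without a uniform degree bound their intersection $\bigcap_d V_d$ need not be Zariski open. Gotzmann's regularity theorem is the essential ingredient that reduces the problem to a finite intersection; once that bound is in hand, the rest of the argument is standard Schubert cell geometry on the Grassmannian.
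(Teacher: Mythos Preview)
The paper does not prove this proposition; it is quoted from \cite[Theorem~15.18]{eisenbud1995} without argument. Your proof is correct and follows the standard route: stratify each $\grass(r_d,A_d)$ by Schubert cells indexed by the possible leading-monomial subspaces, use irreducibility of $\gl_n(\C)$ to find a dense open $V_d\subset\gl_n(\C)$ on which the degree-$d$ initial piece $M_d:=\initial(GI)_d$ is constant, and then reduce to a finite intersection of the~$V_d$.

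One remark on the gluing step. Invoking a uniform Castelnuovo--Mumford regularity bound is heavier than necessary, and the attribution to ``Gotzmann's regularity theorem'' alone is slightly loose: to bound the regularity of \emph{every} ideal with the given Hilbert function you implicitly also need that the lex-segment ideal maximizes regularity, i.e.\ the Bigatti--Hulett--Pardue theorem. A lighter argument runs as follows. For each $d$ the intersection $V_d\cap V_{d+1}$ is nonempty (two dense opens in an irreducible variety), and for any $G$ in it one has
\[
A_1\,M_d \;=\; A_1\,\initial(GI)_d \;\subseteq\; \initial(GI)_{d+1} \;=\; M_{d+1},
\]
so $J:=\bigoplus_{d\ge 0} M_d$ is already a homogeneous ideal with the same Hilbert function as $I$. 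Being finitely generated, $J$ is generated in degrees $\le N$ for some $N$, and then $V:=V_0\cap\cdots\cap V_N$ works: for $G\in V$ and any $d>N$ one has $J_d=\langle J_{\le N}\rangle_d\subseteq\initial(GI)_d$ while $\dim J_d=\dim I_d=\dim\initial(GI)_d$, forcing $\initial(GI)=J$. This avoids any appeal to regularity theory.
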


\begin{dfn}
Let $I\subset A$ be a homogeneous ideal and $G\in V$ as in \cref{prop:gin_existence} then
\[
\gin(I):=\initial(GI)
\]
is called the \todfn{generic initial ideal} of $I$.
\end{dfn}

\begin{prop}[{\cite[Theorem 15.20, 15.23]{eisenbud1995}}]
\label{prop:gin_is_stongly_stable}
Let $I\subset A$ be a homogeneous ideal, then for every $s \ge 0$ the vector space $\gin(I)_s$ is strongly stable.
\end{prop}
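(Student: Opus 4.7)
The plan is to split the proof into two steps that rely on the classical Galligo theorem: (a) $\gin(I)$ is fixed by the Borel subgroup $B \subset \gl_n(\C)$ of upper triangular matrices, and (b) in characteristic zero, every $B$-invariant homogeneous monomial ideal is strongly stable.

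Step (a) is the technical heart of the argument. One classical route is to exhibit $\gin(I)$ as the flat limit of $I$ under a $1$-parameter subgroup $h_t \subset \gl_n(\C)$ associated with the lex order, after first conjugating by a generic element: $J := \gin(I) = \lim_{t \to 0} h_t \cdot (g I)$ in the Gröbner/Hilbert-scheme sense. Since the normalizer of this $1$-parameter subgroup in $\gl_n(\C)$ contains $B$, general principles on fixed points of torus degenerations force the limit $J$ to be $B$-invariant. The existence of the limit and the $B$-invariance of $J$ require a semicontinuity argument of Bayer-Stillman; I would follow the exposition of Eisenbud here.

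For step (b), let $J$ be a $B$-invariant homogeneous monomial ideal. Fix indices $i < j$ and consider the strictly upper triangular one-parameter subgroup $b_s := I + s E_{ij} \in B$. With the convention $(b_s \cdot p)(x) = p(b_s^{-1} x)$ used in \cref{prop:gin_existence}, $b_s$ acts as a ring automorphism by $x_i \mapsto x_i - s x_j$ and $x_k \mapsto x_k$ for $k \ne i$. For a monomial $m \in J_d$ with $x_i \mid m$ and $a_i := \deg_{x_i}(m) \ge 1$,
\[
b_s \cdot m \;=\; (x_i - s x_j)^{a_i} \cdot \frac{m}{x_i^{a_i}} \;=\; \sum_{k=0}^{a_i} \binom{a_i}{k} (-s)^k \, x_j^k \cdot \frac{m}{x_i^k}.
\]
All values of this polynomial in $s$ lie in the finite-dimensional subspace $J_d$, so Lagrange interpolation forces every coefficient in the $s$-expansion into $J_d$. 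The coefficient of $s^1$ equals $-a_i \cdot x_j (m/x_i)$, non-zero because $a_i \in \Z_{>0}$ and we are in characteristic zero. Hence $x_j (m/x_i) \in J_d$, which is precisely the strong stability condition.

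The main obstacle is step (a). Galligo's theorem is classical but not immediate from the definition of $\gin$: the interaction of $B$-translation with the $\initial$-operation requires either the Gröbner degeneration picture or a direct semicontinuity argument on the stratification of $\gl_n(\C)$ by initial ideal. Step (b) is then a formal consequence of the characteristic-zero correspondence between infinitesimal Borel invariance and combinatorial exchange on monomials.
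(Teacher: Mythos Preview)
The paper does not supply its own proof of this proposition; it simply cites Eisenbud's Theorems 15.20 and 15.23, which together give exactly the two-step argument you outline: first that $\gin(I)$ is fixed by the Borel subgroup (Galligo's theorem, Eisenbud 15.20), and second that in characteristic zero every Borel-fixed monomial ideal is strongly stable (Eisenbud 15.23). Your sketch of step~(b) via the one-parameter unipotent $b_s = I + sE_{ij}$ and extraction of the $s^1$-coefficient is the standard characteristic-zero argument, and with the paper's conventions ($x_1<\dots<x_n$ and $(G\cdot p)(x)=p(G^{-1}x)$) it indeed yields $x_j(m/x_i)\in J$ for $j>i$, matching the paper's definition of strong stability. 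Step~(a) you correctly identify as the non-trivial part and defer to Eisenbud, which is exactly what the paper does. So your proposal and the paper's treatment coincide.
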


Since our goal is to determine $\codim U^2$ we will use the following notation.

\begin{dfn}
For $n,\,d\ge 2$ and $k\ge 1$ let
\[
m(n,d,k)=\max\{\codim U^2\colon U\subset A(n)_d\text{ subspace},\, \codim U=k\}.
\]
\end{dfn}

The next Proposition gives a combinatorial bound for this maximum.

\begin{prop}[{\cite[Proposition 2.2]{bc2018}}]
\label{prop:ss_is_minimal}
Let $k\in\N$. There exists a strongly stable subspace $U\subset A_d$ of codimension $k$ such that
\[
m(n,d,k) = \codim U^2.
\]
\end{prop}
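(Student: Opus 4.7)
The plan is to realize the maximum $m(n,d,k)$ by a generic initial ideal and exploit its strong stability. I would start with any subspace $U\subset A_d$ of codimension $k$ attaining $\codim U^2 = m(n,d,k)$, and first apply a generic $G\in\gl_n(\C)$ from the Zariski-open set of \cref{prop:gin_existence}. A linear change of coordinates preserves every relevant invariant, so $\codim(GU)=k$ and $(GU)^2 = G(U^2)$ still has codimension $m(n,d,k)$. Then I would take
\[
U':=\initial\bigl(\langle GU\rangle\bigr)_d = \gin\bigl(\langle U\rangle\bigr)_d,
\]
which is strongly stable by \cref{prop:gin_is_stongly_stable}. Since passing to initial terms preserves the Hilbert function of a graded ideal (equivalently, $\dim W = \dim\initial(W)$ for any $W\subset A_s$), the subspace $U'$ again has codimension $k$ in $A_d$.

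The remaining task is to show $\codim(U')^2\ge m(n,d,k)$, which combined with the trivial reverse inequality from the definition of $m(n,d,k)$ gives the claim. The key input is the containment
\[
\initial(V)\cdot\initial(V) \subset \initial(V^2)
\]
for any subspace $V\subset A_d$. To verify it, pick a basis $v_1,\dots,v_s$ of $V$ with pairwise distinct initial monomials $m_i:=\initial(v_i)$; these $m_i$ form a basis of $\initial(V)$, and because initial terms are multiplicative under a monomial ordering, $m_im_j = \initial(v_iv_j)\in\initial(V^2)$. Hence the spanning set $\{m_im_j\}$ of $\initial(V)^2$ lies inside $\initial(V^2)$. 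Applying this with $V=GU$ gives $(U')^2 \subset \initial\bigl((GU)^2\bigr)$, so
\[
\dim(U')^2 \le \dim(GU)^2 = \dim U^2,
\]
and therefore $\codim(U')^2 \ge \codim U^2 = m(n,d,k)$, which is what we wanted.

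The only real obstacle is the multiplicativity statement $\initial(V)^2\subset\initial(V^2)$, but once the right basis of $V$ (diagonalized with respect to leading monomials) is chosen it reduces to the elementary fact $\initial(pq)=\initial(p)\initial(q)$. Everything else is a packaging of the generic-initial-ideal formalism provided by \cref{prop:gin_existence} and \cref{prop:gin_is_stongly_stable} together with the $\gl_n$-invariance of the map $U\mapsto\codim U^2$.
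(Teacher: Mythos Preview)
Your proof is correct and follows essentially the same route as the paper: start from a maximizing $U$, pass to the generic initial ideal via a generic coordinate change, use the containment $\initial(V)^2\subset\initial(V^2)$ together with preservation of Hilbert functions to conclude that the strongly stable space $U'=\gin(\langle U\rangle)_d$ still realizes $m(n,d,k)$. The only difference is that you spell out the diagonalized-basis justification for $\initial(V)^2\subset\initial(V^2)$, which the paper takes for granted.
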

\begin{proof}
For every subspace $U\subset A_d$ we have $\initial(U)^2\subset \initial(U^2)$. Note that the Hilbert function of $\initial(U^2)$ is the same as the Hilbert function of $\id{U^2}$. Therefore $\dim (\initial(U)^2)_{2d}\le \dim U^2$ and hence the minimal value of $\dim U^2$ is attained by a monomial subspace $U$. Applying a general \cc\ we can assume that $\initial(U)$ is the generic initial ideal. Hence it is strongly stable by \cref{prop:gin_is_stongly_stable}.
\end{proof}

\begin{rem}
\label{rem:example_ss}
We know that the largest codimension of $U^2$ is achieved by a strongly stable subspace $U$. For small $n,d$ and codimension $k$ this is a list of the largest codimension of $U^2$ such that $\codim U=k$ and $U\subset A_d$ for $n=2,3,4,5,6$. This has been calculated with SAGE by first finding all strongly stable subspaces of some fixed codimension and then finding the maximum of all $\codim U^2$.

\FloatBarrier
{\tiny
\begin{table*}[ht]\centering
\ra{1.3}
\begin{tabular}{@{}rrrrrrrrrrcrrrrrrrrr@{}}\toprule
&\multicolumn{9}{c}{$n=3$} & \phantom{abc} & \multicolumn{8}{c}{$n=4$}\\
\midrule
$d=$ && $2$ & $3$ & $4$ & $5$ & $6$ & $7$ & $8$ & $9$ && $2$ & $3$ & $4$ & $5$ & $6$ & $7$ & $8$ & $9$\\ \midrule
$\codim U=$\\
$1$ && $3$ & $3$ & $3$ & $3$ & $3$ & $3$ & $3$ & $3$ && $4$ & $4$ & $4$ & $4$ & $4$ & $4$ & $4$ & $4$\\
$2$ && $6$ & $6$ & $6$ & $6$ & $6$ & $6$ & $6$ & $6$ && $8$ & $8$ & $8$ & $8$ & $8$ & $8$ & $8$ & $8$\\
$3$ && $10$ & $10$ & $10$ & $10$ & $10$ & $10$ & $10$ & $10$ && $13$ & $13$ & $13$ & $13$ & $13$ & $13$ & $13$ & $13$\\
$4$ && $12$ & $13$ & $13$ & $13$ & $13$ & $13$ & $13$ & $13$ && $20$ & $20$ & $20$ & $20$ & $20$ & $20$ & $20$ & $20$\\
$5$ && $14$ & $16$ & $17$ & $16$ & $16$ & $16$ & $16$ & $16$ && $23$ & $24$ & $25$ & $24$ & $24$ & $24$ & $24$ & $24$\\
$6$ && $-$ & $21$ & $21$ & $21$ & $21$ & $21$ & $21$ & $21$ && $26$ & $29$ & $29$ & $31$ & $28$ & $28$ & $28$ & $28$\\
$7$ && $-$ & $23$ & $24$ & $24$ & $25$ & $24$ & $24$ & $24$ && $30$ & $35$ & $35$ & $35$ & $37$ & $35$ & $35$ & $35$\\
$8$ && $-$ & $25$ & $27$ & $27$ & $28$ & $29$ & $27$ & $27$ && $32$ & $39$ & $40$ & $41$ & $41$ & $43$ & $40$ & $40$\\
$9$ && $-$ & $27$ & $30$ & $31$ & $31$ & $32$ & $33$ & $31$ && $34$ & $45$ & $45$ & $45$ & $47$ & $47$ & $49$ & $45$\\
\bottomrule
\bottomrule
&\multicolumn{9}{c}{$n=5$} & \phantom{abc} & \multicolumn{8}{c}{$n=6$}\\
\midrule
$d=$ && $2$ & $3$ & $4$ & $5$ & $6$ & $7$ & $8$ & $9$ && $2$ & $3$ & $4$ & $5$ & $6$ & $7$ & $8$ & $9$\\ \midrule
$\codim U=$\\
$1$ && $5$ & $5$ & $5$ & $5$ & $5$ & $5$ & $5$ & $5$ && $6$ & $6$ & $6$ & $6$ & $6$ & $6$ & $6$ & $6$\\
$2$ && $10$ & $10$ & $10$ & $10$ & $10$ & $10$ & $10$ & $10$ && $12$ & $12$ & $12$ & $12$ & $12$ & $12$ & $12$ & $12$\\
$3$ && $17$ & $16$ & $16$ & $16$ & $16$ & $16$ & $16$ & $16$ && $21$ & $19$ & $19$ & $19$ & $19$ & $19$ & $19$ & $19$\\
$4$ && $24$ & $25$ & $24$ & $24$ & $24$ & $24$ & $24$ & $24$ && $28$ & $31$ & $28$ & $28$ & $28$ & $28$ & $28$ & $28$\\
$5$ && $35$ & $35$ & $35$ & $35$ & $35$ & $35$ & $35$ & $35$ && $40$ & $40$ & $41$ & $40$ & $40$ & $40$ & $40$ & $40$\\
$6$ && $39$ & $40$ & $40$ & $41$ & $40$ & $40$ & $40$ & $40$ && $56$ & $56$ & $56$ & $56$ & $56$ & $56$ & $56$ & $56$\\
$7$ && $43$ & $47$ & $45$ & $46$ & $49$ & $45$ & $45$ & $45$ && $61$ & $62$ & $62$ & $62$ & $62$ & $62$ & $62$ & $62$\\
$8$ && $48$ & $54$ & $55$ & $54$ & $54$ & $57$ & $54$ & $54$ && $66$ & $71$ & $68$ & $68$ & $68$ & $71$ & $68$ & $68$\\
$9$ && $55$ & $60$ & $60$ & $63$ & $61$ & $62$ & $65$ & $59$ && $73$ & $79$ & $81$ & $79$ & $79$ & $79$ & $81$ & $79$\\
\bottomrule
\end{tabular}
\end{table*}
}
Note that in any case this codimension (of $U^2$) is achieved by a strongly stable subspace and thus $U$ has a real \bp. This means that if we look at the corresponding faces these will always be faces of \gsa\ of forms that lie on the boundary of the psd cone. In the case $k=1$ we will see that a face of maximal dimension always is the whole \gs\ for some form on the boundary of the psd cone. On the other hand for $k=2, n=3, d=2$ the maximum of $\codim U^2=6$ can also be attained by subspaces $U$ that are \bpf.
\FloatBarrier

Furthermore we see that in these examples $m(n,d,k)=m(n,k,k)$ for any $d\ge k$, this is actually always true as we show later.
\end{rem}

\begin{rem}
Note that in general we have $\initial(U)^2\subsetneq\initial(U^2)$. Take for example $U=\spn(x_1^2+x_2^2)^\perp$ with $n\ge 3$, then $U$ is spanned by all monomials except for $x_1^2$ and $x_2^2$ and by the binomial $x_1^2-x_2^2$. Furthermore $\initial(U)_2$ is spanned by all monomials except for $x_1^2$. Now one easily checks that $\codim \initial(U^2)_{4}=\codim U^2=2$ and $\codim \initial(U)_2^2=n$.
\end{rem}

This establishes upper bounds for $\codim U^2$. The lower bound will be attained by a generic subspace $U$ of fixed dimension. As the next Proposition shows, this dimension is not always the expected one.

\begin{prop}[{\cite[Proposition 2.8.]{bc2018}}]
\label{prop:quaternarydegenerate}
Let $n=4, d=2$ and $U\subset A_2$ be a subspace of codimension 2. Then $\dim U^2\le 34<35=\min\{\dim A_4, \dim \mathsf{S}^2U\}$.
\end{prop}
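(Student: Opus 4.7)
The plan is to show $\codim_{A_4}(U^2)\ge 1$, equivalently $\dim\ker(\mu|_{\mathsf{S}^2 U})\ge 2$. A trivial count gives $\dim\mathsf{S}^2 U-\dim A_4=36-35=1$, so only one relation is automatic; I must exhibit one more.

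Let $V=U^\perp\subset A_2$, $\dim V=2$. Since $\dim U^2$ is lower semi-continuous in $V\in\grass(2,A_2)$, its maximum over all $V$ is attained on a Zariski-open dense subset, and it suffices to bound $\dim U^2$ for generic $V$. By the Kronecker--Weierstrass classification of pencils of quadric surfaces in $\P^3$, a generic $V$ can be simultaneously diagonalized: after a linear change of coordinates I may assume $V=\spn(\sum_i x_i^2,\sum_i\mu_ix_i^2)$ with $\mu_1,\dots,\mu_4\in\C$ pairwise distinct. Apolarity then yields $U=V^\perp=\spn(x_ix_j:1\le i<j\le 4)\oplus W$, where $W\subset\spn(x_1^2,\dots,x_4^2)$ is the $2$-dimensional subspace defined by $\sum_ic_i=0$ and $\sum_i\mu_ic_i=0$.

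Write $A_4=M\oplus P$ where $P=\spn(x_1^4,\dots,x_4^4)$ and $M$ is the $31$-dimensional span of the remaining (``mixed'') quartic monomials. I compute $U^2$ in three stages. (i) Pairwise products of the six off-diagonal basis elements of $U$ already span the $19$ monomials in $M$ whose exponents are all $\le 2$. (ii) For each pair $i<j$, the products $w\cdot x_ix_j$ as $w$ runs through a basis of $W$ give, modulo (i), a $2\times 2$ linear system in $x_i^3x_j$ and $x_ix_j^3$ whose coefficient matrix is the $(i,j)$-submatrix of a $4\times 2$ parametrization of $W$; by Pl\"ucker duality its determinant equals, up to sign, the complementary minor $\mu_l-\mu_k$ with $\{k,l\}=\{1,\dots,4\}\setminus\{i,j\}$, which is nonzero by distinctness of the $\mu_i$, so all $12$ monomials of type $x_a^3x_b$ lie in $U^2$. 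Together (i) and (ii) yield $U^2\supset M$. (iii) The projection of $U^2$ onto $P$ is spanned by the Hadamard (coordinate-wise) products $w\odot w'$ for $w,w'\in W$, i.e.\ by the image of the symmetric square $\mathsf{S}^2 W\to\C^4$; since $\dim\mathsf{S}^2 W=3$, this projection has dimension at most $3$. Combining, $\dim U^2\le 31+3=34$, and semi-continuity extends the bound to every $V$.

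The main obstacle is step (iii): the observation that the Hadamard square of any $2$-dimensional subspace of $\C^4$ has dimension at most $3<4$. This strict inequality is the structural source of the bound, since the codimension-$1$ direction of $P$ missed by $W\odot W$ produces a nonzero element of $(U^2)^\perp$, giving the extra syzygy beyond the one forced by dimension count.
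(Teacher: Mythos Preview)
Your proof is correct and shares the same setup as the paper's---reduce to a generic pencil $V=U^\perp$ by semi-continuity, simultaneously diagonalize, and observe that all six off-diagonal monomials $x_ix_j$ lie in $U$---but the decisive step is different. The paper simply notes that among these six elements there are already two syzygies in $\mathsf{S}^2U$, namely
\[
(x_1x_4)(x_2x_3)=(x_1x_2)(x_3x_4)\quad\text{and}\quad (x_1x_3)(x_2x_4)=(x_1x_2)(x_3x_4),
\]
so $\dim\ker(\mu|_{\mathsf{S}^2U})\ge 2$ and hence $\dim U^2\le 36-2=34$; no further computation is needed. You instead prove the complementary statement on the image side: first establishing $M\subset U^2$ via the Pl\"ucker argument in step (ii), and then bounding the projection of $U^2$ onto $P=\spn(x_1^4,\dots,x_4^4)$ by the dimension of the Hadamard square $\mathsf{S}^2W\to\C^4$. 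Your route is longer but buys more: it pins down $U^2$ as $M$ plus an at-most-$3$-dimensional slice of $P$, and locates the missing direction explicitly as the cokernel of the Hadamard map. The paper's route is a two-line shortcut that never touches $W$ or the pure powers at all.
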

\begin{proof}
The dimension of $U^2$ is maximal for generic $U$ since $U^2$ is the image of the linear map $\mathsf{S}^2U\to A_{2d}$ and thus generically has maximal rank. Hence we can assume that $U$ is generic, then $U$ is apolar to a 2-dimensional space $W$. Since $W$ is generic it contains a quadratic form $q$ of rank 4. After a \cc\ we can assume that $q=x_1^2+x_2^2+x_3^2+x_4^2$ and that $W=\spn(q,a_1x_1^2+a_2x_2^2+a_3x_3^2+a_4x_4^2)$. By construction $U$ contains all polynomials apolar to both of these forms. Since all monomials $x_ix_j$ with $i\neq j$ are apolar to $W$ they are contained in $U$. Thus there are two quadratic relations, namely $(x_1x_4)(x_2x_3)=(x_1x_2)(x_3x_4)$ and $(x_1x_3)(x_2x_4)=(x_1x_2)(x_3x_4)$. And thus the kernel of the map $\mathsf{S}^2U\to U^2$ has dimension at least two. Since $\dim \mathsf{S}^2U=36$ we see that $\dim U^2\le 34$.
\end{proof}

\begin{example}[ternary quartics]
\label{ex:ternaryquartics}
Consider the case $n=3, d=2$. For $f\in\interior(\Sigma_{n,2d})$ any interior point of $\gram(f)$ has rank 6 and the dimension of $\gram(f)$ is also 6.

Which dimensions can faces $F$ of rank $r\in\{3,4,5\}$ have? First consider $r=5$. Corresponding to such a face we have a subspace $U\subset A_2$ of codimension 1. Then $\codim U^2\le 3$ by \cref{rem:example_ss} and we get $\dim F=\binom{5+1}{2}-\dim A_4+\codim U^2\le 3$.

For rank 4 faces we have $\codim U^2\le 6$, resulting in $\dim F\le 1$. The same holds for faces of rank 3.

This especially means that there are never faces of dimension $4$ and $5$ of any rank.
\end{example}

For later reference we will now consider \bpf\ monomial subspaces $U$ and find bounds for $\codim U^2$.

\begin{lem}
\label{lem:monomial_codim_1}
Let $d\ge 2$ and let $U\subset A_{d}$ be a \bpf, monomial subspace of codimension 1. Then the following hold: 
\begin{enumerate}
\item If $d=2$ then $\codim U^2\in \{0,2\}$,
\item if $d\ge 3$ then $\codim U^2\in\{0,1\}$.
\end{enumerate}
\end{lem}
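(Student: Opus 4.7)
The plan is to translate the claim into a combinatorial divisor problem. Since $U$ is monomial of codimension $1$, it is the span of every degree-$d$ monomial except a single one, $m_0=x^\alpha$. Being base-point-free is equivalent to $m_0$ not being a pure power $x_i^d$: if $m_0=x_i^d$ then $e_i\in\P^{n-1}$ is a common zero of $U$, and conversely any common zero $\xi\in\sz(U)$ makes every monomial other than $m_0$ vanish, which forces $\xi$ to have exactly one nonzero coordinate and hence $m_0=x_i^d$. Because $U$ is monomial, $U^2$ is spanned by all products $mm'$ with $m,m'$ monomials in $A_d\setminus\{m_0\}$. Hence a degree-$2d$ monomial $M$ lies in $U^2$ iff it admits a degree-$d$ monomial divisor $\gamma\ne m_0$ with $M/\gamma\ne m_0$; equivalently, $M\notin U^2$ iff every degree-$d$ divisor of $M$ belongs to $\{m_0,M/m_0\}$, which forces $m_0\mid M$.

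Two easy reductions handle the boundary cases. If $m_0\nmid M$ then $M\in U^2$ automatically. If $M=m_0^2$, pick two distinct variables $x_i,x_j$ in the support of $m_0$ (possible since $m_0$ is not a pure power); then the factorization $m_0^2=(m_0\cdot x_i/x_j)(m_0\cdot x_j/x_i)$ writes $M$ as a product of two monomials different from $m_0$. So the missing monomials all satisfy $m_0\mid M$ and $M\ne m_0^2$.

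For part (i), $d=2$ forces $m_0=x_ix_j$ with $i\ne j$. A short enumeration of the degree-$4$ monomials $M$ divisible by $x_ix_j$, organized by whether the support of $M$ is contained in $\{i,j\}$ and where any extra variable sits, shows that the only $M$ whose degree-$2$ divisors are exactly $\{m_0,M/m_0\}$ are $x_i^3x_j$ and $x_ix_j^3$. Both are genuinely missing from $U^2$, giving $\codim U^2=2$.

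For part (ii), $d\ge 3$, I case-split on the shape of $m_0=x^\alpha$. The main technique is an \emph{exchange}: given $M=x^\beta$ with $m_0\mid M$ and $M\ne m_0^2$, pick $p$ with $\beta_p>\alpha_p$ and $q\in\text{supp}(m_0)$ with $p\ne q$, and try $\gamma=m_0\cdot x_p/x_q$. This is a valid degree-$d$ divisor $\ne m_0$, and the only obstruction to $M/\gamma\ne m_0$ is the accidental identity $\beta=2\alpha+e_p-e_q$, which pins $M$ down uniquely. If $|\text{supp}(m_0)|\ge 3$, or $\text{supp}(m_0)=\{i,j\}$ with both exponents $\ge 2$, a second choice of $(p,q)$ is always available and yields $\codim U^2=0$. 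In the remaining case $m_0=x_ix_j^{d-1}$ (up to swapping $i,j$), the exchange breaks down for exactly one $M$, namely $M=x_ix_j^{2d-1}$, whose only degree-$d$ divisors are $x_j^d=M/m_0$ and $m_0$ itself; every other $M$ is rescued by either an alternative exchange or (when $\text{supp}(M)\subset\{i,j\}$) by a direct divisor count, yielding $\codim U^2=1$. The hypothesis $d\ge 3$ enters precisely to ensure these alternatives exist, which is what separates the conclusions of (i) and (ii); keeping the bookkeeping uniform across the subcases is the main technical obstacle.
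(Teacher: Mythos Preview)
Your proof is correct, but it takes a different and considerably longer route than the paper's.

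The paper's argument is organized around the \emph{target} monomials rather than around $m_0$. It first observes that a degree-$2d$ monomial $M$ can be excluded from $U^2$ by removing a single $m_0$ only if $M$ has a \emph{unique} factorization into two degree-$d$ monomials (since if $M$ has two distinct factorizations, $m_0$ can appear in at most one of them). Up to permutation of variables there are exactly two such monomials, $x_1^{2d}$ and $x_1^{2d-1}x_2$. The first forces $m_0=x_1^d$, which has a base-point; the second forces $m_0\in\{x_1^d,\,x_1^{d-1}x_2\}$, and the base-point-free option $m_0=x_1^{d-1}x_2$ then excludes exactly $x_1^{2d-1}x_2$ for $d\ge 3$, and both $x_1^3x_2$ and $x_1x_2^3$ for $d=2$ (since then $x_1^{d-1}x_2=x_2^{d-1}x_1$). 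Any other $m_0$ gives $\codim U^2=0$. This handles everything in a few lines.

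Your approach instead fixes $m_0$, case-splits on the size and shape of its support, and for each candidate $M$ constructs an explicit alternative factorization via the exchange $\gamma=m_0\cdot x_p/x_q$. This is perfectly valid and yields the same answer, but it requires tracking several subcases and verifying that the exchange (or a second choice of $(p,q)$) succeeds in each. One small point of phrasing: when you say ``a second choice of $(p,q)$ is always available'' in the cases $|\mathrm{supp}(m_0)|\ge 3$ or both exponents $\ge 2$, this is literally true only for the specific obstructed $M$ with $\beta=2\alpha+e_p-e_q$; for other $M$ (e.g.\ $M=x_i^{2d-b}x_j^{b}$ with $b\ge 2$ when $\mathrm{supp}(m_0)=\{i,j\}$) there may be only one valid pair, but that pair already works. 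This does not affect correctness, only the clarity of the exposition.

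The trade-off: the paper's classification of uniquely-factorable monomials is the cleaner organizing principle and makes the case analysis nearly disappear; your exchange technique is more hands-on and self-contained, and essentially re-derives that classification implicitly.
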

\begin{proof}
There are (up to permutation of variables) exactly two monomials in $A_{2d}$ that have only one decomposition into two monomials of degree $d$ namely $x_1^{2d}$ and $x_1^{2d-1}x_2$. Therefore the only subspaces $U$ we need to consider are the orthogonal complements of $x_1^d$ and $x_1^{d-1}x_2$. In the first case $U$ has a \bp\, in the second case the only monomial not contained in $U^2$ is $x_1^{2d-1}x_2$ if $d\ge 3$ and thus $\codim U^2=1$. If $d=2$ then $x_1^3x_2$ and $x_1x_2^3$ are not contained in $U^2$ and we have $\codim U^2=2$. In any other case $U^2=A_{2d}$.
\end{proof}

\begin{lem}
\label{lem:monomial_subspaces_of_codimension_2}
Let $U\subset A_d$ be a \bpf, monomial subspace of codimension 2. Then the following hold: 
\begin{enumerate}
\item If $d=2$ then $\codim U^2\le 6$,
\item if $d\in\{3,4\}$ then $\codim U^2\le 4$,
\item if $d\ge 5$ then $\codim U^2\le 2$.
\end{enumerate}
\end{lem}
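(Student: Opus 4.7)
The plan is a direct combinatorial enumeration, in the spirit of the proof of \cref{lem:monomial_codim_1}. Since $U$ is a monomial codimension-$2$ subspace, write $\bar U=\{m_1,m_2\}$ for the two monomials of $A_d$ that are not in $U$. The \bpf\ condition forces $m_1,m_2\notin\{x_1^d,\dots,x_n^d\}$, because if $x_i^d\notin U$ then every other monomial vanishes at the coordinate point $e_i$, so $e_i\in\sz(U)$. Call $M\in A_{2d}$ a \emph{bad} monomial if $M\notin U^2$; since $U$ (and hence $U^2$) is monomial, $\codim U^2$ equals the number of bad monomials and it suffices to bound this count.

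For a monomial $M$ of degree $2d$ let $D(M)$ denote the set of degree-$d$ monomial divisors of $M$; the unordered monomial factorizations $M=p\cdot q$ are exactly the pairs $\{p,M/p\}$ with $p\in D(M)$. Since $U$ is monomial, $M$ is bad iff every $p\in D(M)$ satisfies $p\in\bar U$ or $M/p\in\bar U$, i.e.\
\[
D(M)\subseteq\{m_1,\,m_2,\,M/m_1,\,M/m_2\},
\]
where $M/m_i$ is included only when $m_i\mid M$. In particular $|D(M)|\le 4$ and $M$ is divisible by $m_1$ or $m_2$.

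Next I would enumerate monomials $M=x^\gamma\in A_{2d}$ with $|D(M)|\le 4$. Since $|D(M)|$ equals the number of integer vectors $0\le\alpha\le\gamma$ with $|\alpha|=d$, a direct computation shows that $\gamma$ has support of size at most $3$ and, up to renaming variables, falls into the shapes $(2d)$, $(2d-1,1)$, $(2d-2,2)$, $(2d-3,3)$ (only when $d\ge 3$) and $(2d-2,1,1)$. For each shape, the constraint $D(M)\subseteq\{m_1,m_2,M/m_1,M/m_2\}$ together with the involution $p\mapsto M/p$ on $D(M)$ dictates which monomials must lie in $\bar U$ in order for $M$ to be bad: for instance $M=x_1^{2d-1}x_2$ is bad iff $x_1^{d-1}x_2\in\bar U$, while $M=x_1^{2d-2}x_2^2$ is bad iff both $x_1^{d-1}x_2$ and $x_1^{d-2}x_2^2$ belong to $\bar U$.

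Assembling the cases for each range of $d$ yields the stated bounds. For $d\ge 5$ the middle shapes contribute no bad monomial under any choice of $\bar U$ (the decomposition $(x_1^d,\,x_1^{d-k}x_2^k)$ with $k\ge 3$ cannot be covered, since $x_1^d\notin\bar U$ and $x_1^{d-k}x_2^k$ differs from both $m_1$ and $m_2$), leaving at most two bad monomials of shape $(2d-1,1)$; the extremal configuration is $\bar U=\{x_1^{d-1}x_2,\,x_1x_2^{d-1}\}$ with bad monomials $x_1^{2d-1}x_2$ and $x_1x_2^{2d-1}$. For $d\in\{3,4\}$ a middle-shape coincidence, namely $(2d-2,2)$ for $d=3$ and $(2d-3,3)$ for $d=4$, contributes two extra bad monomials symmetric under $x_1\leftrightarrow x_2$, giving the bound $4$. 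The case $d=2$ is the most intricate: several essentially different configurations of $\bar U$ (sharing one variable, sharing both variables, or fully disjoint) have to be checked, and the extremal configuration $\bar U=\{x_1x_2,\,x_1x_3\}$ (in $n\ge 3$ variables) produces the six bad monomials $x_1^3x_2,\,x_1^3x_3,\,x_1x_2^3,\,x_1x_3^3,\,x_1x_2^2x_3,\,x_1x_2x_3^2$. The main obstacle is the $d=2$ case: verifying that six is never exceeded in any sub-case requires the most careful bookkeeping, since the small degree admits many shapes of $M$ with small $|D(M)|$ and the symmetry used in the higher-degree arguments no longer prunes cases as efficiently.
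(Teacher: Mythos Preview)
Your approach mirrors the paper's: list the degree-$2d$ monomial shapes with at most two unordered factorizations into degree-$d$ monomials, then check that removing any two non--pure-power monomials from $A_d$ leaves at most the claimed number of them outside $U^2$. Your $D(M)$ formalism and the covering criterion $D(M)\subseteq\{m_1,m_2,M/m_1,M/m_2\}$ make this more explicit than the paper, which simply lists the five relevant shapes and says ``one easily checks''; for $d=2$ the paper tabulates all five choices of $\bar U$ up to permutation, which is exactly what your ``careful bookkeeping'' would amount to.

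There is one genuine slip in your $d\ge 5$ argument. The assertion that ``the middle shapes contribute no bad monomial under any choice of $\bar U$'' is false: with $\bar U=\{x_1^{d-1}x_2,\,x_1^{d-2}x_2^2\}$ the monomial $M=x_1^{2d-2}x_2^2$ of shape $(2d-2,2)$ has $D(M)=\{x_1^d,\,x_1^{d-1}x_2,\,x_1^{d-2}x_2^2\}$ and is bad (this is in fact the paper's example). Your parenthetical justification, ``$x_1^{d-k}x_2^k$ differs from both $m_1$ and $m_2$'', is unsupported --- nothing prevents one of the excluded monomials from equalling $x_1^{d-k}x_2^k$. The correct argument is that any bad monomial of a middle shape forces \emph{both} elements of $\bar U$ to be specific monomials, and one then verifies (for $d\ge 5$) that each such forced $\bar U$ produces at most one further bad monomial, necessarily of shape $(2d-1,1)$; hence the total is still $\le 2$. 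Your conclusion survives, but the step as written does not.
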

\begin{proof}
For $n=2$ this follows from \cref{prop:ss_is_minimal}, so assume that $n\ge 3$. One easily checks that for $d=3,4$ we have $\codim U^2\le 4$ and that $=4$ does occur in both cases. 
Now let $d\ge 5$. Up to permutation of the variables there are five monomials of degree $2d$ that have two or less representations as product of two monomials of degree $d$, namely
\begin{align*}
x_1^{2d}&=x_1^dx_1^d\\
x_1^{2d-1}x_2&=x_1^d(x_1^{d-1}x_2)\\
x_1^{2d-2}x_2x_3&=x_1^d(x_1^{d-2}x_2x_3)=(x_1^{d-1}x_2)(x_1^{d-1}x_3)\\
x_1^{2d-2}x_2^2&=x_1^d(x_1^{d-2}x_2^2)=(x_1^{d-1}x_2)^2\\
x_1^{2d-3}x_2^3&=x_1^d(x_1^{d-3}x_2^3)=(x_1^{d-1}x_2)(x_1^{d-2}x_2^2).
\end{align*}
Now one easily checks that for any two monomials of degree $d$ we exclude from $U$ there are at most two of the 5 monomials above not contained in $U^2$ since we cannot exclude $x_1^d$ because $U$ is \bpf. For example take 
\[
U=\spn(x_1^{d-1}x_2,x_1^{d-2}x_2^2)^\perp
\]
then the second and fourth are not contained in $U^2$. Thus we get $\codim U^2\le 2$.

For $d=2$ let $W=U^\perp$ then there are five possibilities up to permutation of the variables for $W$, namely
\FloatBarrier
\begin{table*}[ht]\centering
\ra{1.3}
\begin{tabular}{lll}
$W$ 				& $(U^2)^\perp$ 														& $\codim U^2$\\
\midrule
$x_1^2,x_2^2$ 		& $x_1^3x_i, x_2^3x_i\ \all i$ 											& $2n$\\
$x_1^2, x_1x_2$ 	& $x_1^3x_i, x_2^3x_1\ \all i$											& $n+1$\\
$x_1^2, x_2x_3$ 	& $x_1^3x_i, x_2^3x_3, x_3^3x_2\ \all i$ 								& $n+2$\\
$x_1x_2, x_1x_3$ 	& $x_1^3x_2, x_2^3x_1, x_1^3x_3, x_3^3x_1, x_1x_2^2x_3, x_1x_2x_3^2$ 	& $6$\\
$x_1x_2, x_3x_4$ 	& $x_1^3x_2, x_2^3x_1, x_3^3x_4, x_4^3x_3$								& $4$
\end{tabular}
\end{table*}
\FloatBarrier
In the first three cases $U$ has a \bp.
\end{proof}

\section{Some commutative algebra}
\label{sec:Some commutative algebra}

For the following sections we need some knowledge about the Hilbert functions of ideals generated by subspaces. Let $h_I(t)$ be the Hilbert function of the homogeneous ideal $I$ which we define as $h_I(t)=\dim (A/I)_t,\ t\ge 0$. We introduce theorems of Macaulay and Gotzmann concerning Hilbert functions and Green's Hyperplane Restriction Theorem for later reference.

\begin{dfn}
Let $a,d\in\N$, then $a$ can be uniquely written in the form
\[
a=\binom{k(d)}{d}+\binom{k(d-1)}{d-1}+\dots+\binom{k(1)}{1},
\]
where $k(d)>k(d-1)>\dots>k(1)\ge 0$, called the \todfn{$d$-th Macaulay representation} of $a$ (see \cite[Lemma 4.2.6.]{bh1998}). For any integers $s,t\in\Z$ define
\[
a_{(d)}|^s_t:=\binom{k(d)+s}{d+t}+\binom{k(d-1)+s}{d-1+t}+\dots+\binom{k(1)+s}{1+t}.
\]
Furthermore for $a<b$ we define $\binom{a}{b}=0$.
\end{dfn}

\begin{thm}[Macaulay's Theorem, {\cite[Corollary C.7.]{ikl1999}}, {\cite[Theorem 4.2.10]{bh1998}}]
\label{thm:macaulay}
Let $I\subset A$ be a homogeneous ideal and let $H=(h_i)_{i\ge 0}$ be the Hilbert function of $I$. 
Then 
\begin{enumerate}
\item $h_{i+1}\le (h_i)_{(i)}|^{1}_{1}$ for every $i\ge 0$, and
\item if there exists $j\in\N$ such that $j\ge h_j$ then $h_i\ge h_{i+1}$ for every $i\ge j$.
\end{enumerate}
\end{thm}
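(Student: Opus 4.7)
The plan is to attack both parts by reducing to lex-segment monomial ideals, where everything becomes a combinatorial calculation with the Macaulay representation.

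For (i), I would first pass to the initial ideal: since $\dim(A/I)_t = \dim(A/\initial(I))_t$ for every $t$ by standard Gröbner theory, there is no loss in assuming $I$ is monomial, and then $h_t$ is simply the number of monomials of degree $t$ not lying in $I$. The core reduction is then Macaulay's compression: among monomial ideals with a prescribed value of $h_i$, the lex-segment ideal (whose degree-$j$ component is a lex-initial segment in every degree $j$) produces the smallest allowable $h_{i+1}$. Granting this reduction, the bound reduces to a direct combinatorial count. Writing $h_i = \binom{k(i)}{i} + \dots + \binom{k(1)}{1}$ in its Macaulay representation, one groups the $h_i$ standard monomials of degree $i$ according to their leading variable and counts their immediate multiples in degree $i+1$; the result comes out to exactly $(h_i)_{(i)}|^1_1$. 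Since the lex-segment is the extremal case, the inequality extends to arbitrary monomial ideals and hence to $I$.

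For (ii), I would deduce the statement from (i) by an elementary inspection of the Macaulay representation. Writing $h_j = \binom{k(j)}{j} + \binom{k(j-1)}{j-1} + \dots + \binom{k(1)}{1}$, the hypothesis $h_j \le j$ forces $k(j) \le j$, because $k(j) \ge j+1$ would give $\binom{k(j)}{j} \ge j+1 > h_j$. The strict decrease $k(j) > k(j-1) > \dots$ then propagates to $k(l) \le l$ for every $l$ occurring in the representation. Under this bound, Pascal's identity $\binom{k(l)+1}{l+1} = \binom{k(l)}{l} + \binom{k(l)}{l+1}$ together with the vanishing $\binom{k(l)}{l+1} = 0$ gives $\binom{k(l)+1}{l+1} = \binom{k(l)}{l}$, so $(h_j)_{(j)}|^1_1 = h_j$. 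Applying (i) yields $h_{j+1} \le h_j$, and since $h_{j+1} \le h_j \le j \le j+1$, the argument iterates to give $h_i \ge h_{i+1}$ for every $i \ge j$.

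The main obstacle I foresee is the compression step in (i): showing rigorously that lex-segments minimize the growth of the Hilbert function, and that the lex-segments in successive degrees really close up to form an ideal. This is a classical but delicate exchange argument whose details I would be content to import from Bruns--Herzog rather than redo. The remainder of the proof, namely the combinatorial verification on lex-segments and the derivation of (ii) from (i), is short and can be carried out by hand.
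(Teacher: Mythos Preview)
The paper does not prove this theorem; it simply quotes it from \cite{ikl1999} and \cite{bh1998} and uses it as a black box (see the surrounding text in \cref{sec:Some commutative algebra}). So there is no ``paper's own proof'' to compare against, and your proposal is effectively a sketch of the standard textbook argument, which is exactly what the cited references contain.

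That said, one point in your sketch is stated with the wrong orientation. In part~(i) you write that the lex-segment ideal ``produces the smallest allowable $h_{i+1}$''. It is the opposite: fixing $h_i$, the lex-segment ideal maximises $h_{i+1}$ (equivalently, the lex-segment of monomials in $I_i$ minimises the size of $A_1\cdot I_i$), and this maximum equals $(h_i)_{(i)}|^1_1$. With the direction as you wrote it, the inequality you would obtain goes the wrong way. This is presumably a slip rather than a conceptual error, but it should be corrected before the argument can be read as a proof.

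Your derivation of~(ii) from~(i) is clean and correct: the hypothesis $h_j\le j$ forces $k(l)\le l$ termwise in the $j$-th Macaulay representation, whence each $\binom{k(l)+1}{l+1}=\binom{k(l)}{l}$ and $(h_j)_{(j)}|^1_1=h_j$, so~(i) gives $h_{j+1}\le h_j$ and the induction runs. This is precisely how~(ii) is obtained in the references as well.
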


\begin{thm}[Gotzmann's Persistence Theorem, {\cite[Corollary C.17.]{ikl1999}}, {\cite[Theorem 2.6]{ams2018}}]
\label{thm:gotzmannpersistence}
Let $d \ge 0$ be an integer and let $I$ be a homogeneous ideal that is generated in degrees at most $d$ $(I=\id{I_{\le d}})$. Denote by $H=(h_i)_{i\ge 0}$ the Hilbert function of $I$. 
If $h_{d+1}=(h_d)_{(d)}|^{1}_{1}$ then 
$h_{d+l}=(h_d)_{(d)}|^l_l$ for all $l\ge 1$.
\end{thm}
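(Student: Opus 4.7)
The plan is to reduce the theorem to a combinatorial statement about lex-segment ideals and then verify the persistence identity by a direct computation with the Macaulay representation.

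First, I would reduce to the case of a monomial ideal. Replacing $I$ by its generic initial ideal $\gin(I)$ does not change the Hilbert function, and by \cref{prop:gin_is_stongly_stable} it produces a strongly stable monomial ideal; the hypothesis that $I$ is generated in degrees at most $d$ transfers to $\gin(I)$ as well. So without loss of generality $I$ is a strongly stable monomial ideal with $I=\id{I_{\le d}}$, and in particular $I_{d+l}=A_l\cdot I_d$ for every $l\ge 0$. The task reduces to computing $\dim A_l\cdot I_d$ for all $l\ge 1$.

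Second, I would invoke the equality case of Macaulay's Theorem: among monomial vector spaces $V\subset A_d$ of fixed codimension $h_d$, the codimension of $A_1\cdot V$ in $A_{d+1}$ is at most $(h_d)_{(d)}|_1^1$, with equality exactly when $V$ is the lex-segment of $A_d$ of that codimension. The hypothesis of the theorem says precisely that this equality is attained for $V=I_d$, so the combinatorial shape of $I_d$ coincides with that of the lex-segment $L_d\subset A_d$ of codimension $h_d$. A direct verification then shows that $A_1\cdot L_d$ is itself the lex-segment in $A_{d+1}$ of codimension $(h_d)_{(d)}|_1^1$, and iterating, $A_l\cdot L_d$ is the lex-segment in $A_{d+l}$ of codimension $(h_d)_{(d)}|_l^l$.

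Third, the persistence reduces to the combinatorial identity
\[
\bigl((h_d)_{(d)}|_l^l\bigr)_{(d+l)}|_1^1=(h_d)_{(d)}|_{l+1}^{l+1},
\]
which is immediate from the definition: if $h_d=\sum_i\binom{k(i)}{i}$, then $(h_d)_{(d)}|_l^l=\sum_i\binom{k(i)+l}{i+l}$ is already in Macaulay form in degree $d+l$, and applying $|_1^1$ shifts every entry by one more. An induction on $l$ combined with the lex-segment persistence from step two then gives $\dim(A/I)_{d+l}=(h_d)_{(d)}|_l^l$ for every $l\ge 1$.

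The main obstacle is the equality characterization of Macaulay's Theorem used in step two. This is a rigidity statement asserting that the only monomial vector spaces attaining the extremal codimension of $A_1\cdot V$ are lex-segments, and it is typically proved by a compression argument that iteratively replaces $V$ by a lex-segment of the same codimension without decreasing the codimension of $A_1\cdot V$, then shows that strict improvement occurs at any nontrivial compression step. An alternative route is to cut by a generic linear form and induct on the number of variables, using Green's Hyperplane Restriction Theorem, which will appear shortly in the paper.
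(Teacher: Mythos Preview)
The paper does not prove this statement; \cref{thm:gotzmannpersistence} is quoted from the literature with references and no argument is given. So there is nothing in the paper to compare your proposal against, and I can only assess the proposal on its own merits.

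Your outline has two gaps, one minor and one serious.

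The minor one is in step one: it is not true in general that ``generated in degrees $\le d$'' passes to $\gin(I)$, so you cannot conclude $\gin(I)_{d+l}=A_l\cdot\gin(I)_d$. This is not fatal, since Macaulay's bound already applies to arbitrary subspaces $V\subset A_d$; you can simply work with $V=I_d$ for the original $I$ and dispense with the gin reduction entirely.

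The serious gap is the equality characterisation you invoke in step two. The claim that, among monomial subspaces $V\subset A_d$ of codimension $h_d$, the Macaulay bound $\codim A_1V=(h_d)_{(d)}|^1_1$ forces $V$ to be the lex-segment is false, even among strongly stable subspaces. For instance, in $A=\C[x_1,x_2,x_3]$ with $d=3$, the subspace $V$ with $V^\perp=\spn(x_1^3,\,x_1^2x_2,\,x_1^2x_3)$ is strongly stable, has codimension $3$, and satisfies $\codim A_1V=3=(3)_{(3)}|^1_1$, yet it is not the lex-segment (whose complement is $\spn(x_1^3,\,x_1^2x_2,\,x_1x_2^2)$). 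So from the hypothesis $h_{d+1}=(h_d)_{(d)}|^1_1$ you cannot deduce that $I_d$ (or $\gin(I)_d$) is a lex-segment, and the rest of the argument, which computes $\codim A_lL_d$ for the lex-segment $L_d$, no longer applies to $I_d$.

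What is actually needed is the weaker but still substantial statement: if $V\subset A_d$ satisfies $\codim A_1V=(\codim V)_{(d)}|^1_1$, then $A_1V$ again satisfies $\codim A_1(A_1V)=(\codim A_1V)_{(d+1)}|^1_1$. This ``persistence of extremality'' is precisely the content of Gotzmann's theorem and is not a consequence of the lex-segment computation alone. Your suggested compression argument can be made to prove this, but it proves persistence directly rather than rigidity to lex; the alternative route via Green's \cref{thm:green_thm} that you mention at the end is in fact closer to how modern proofs proceed.
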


\begin{cor}
\label{cor:macaulay_gotzmann}
Let $U\subset A_d$ be a subspace of codimension $k\le d$ and $H=(h_i),\ h_i:=h_{\id{U}}(i)$ the Hilbert function of $\id{U}$ . Then
\begin{enumerate}
\item $h_{d+1}=\codim A_1U\le k$ and 
\item if $h_{d+1}=k$ then $h_{d+i}=\codim A_iU=k$ for all $i\ge 1$.
\end{enumerate}
In case (ii) $\sz(U)\neq\emptyset$ is finite.
\end{cor}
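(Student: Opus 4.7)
The plan is to apply Macaulay's and Gotzmann's Theorems directly, after identifying the $d$-th Macaulay representation of $k$. Since $U$ has codimension $k$ in $A_d$ we have $h_d=k$, and since $\id{U}$ is generated in degree $d$ we have $\id{U}_{d+i}=A_iU$, hence $h_{d+i}=\codim A_iU$ for every $i\ge 1$. Thus (i) and (ii) reduce to estimating the values $h_{d+i}$.

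The key numerical observation is that, because $k\le d$, the $d$-th Macaulay representation of $k$ is the ``trivial'' one,
\[
k=\binom{d}{d}+\binom{d-1}{d-1}+\dots+\binom{d-k+1}{d-k+1},
\]
i.e.\ $k(j)=j$ for $d-k+1\le j\le d$ and $k(j)=0$ otherwise. A direct computation then yields
\[
k_{(d)}|^i_i=\binom{d+i}{d+i}+\binom{d+i-1}{d+i-1}+\dots+\binom{d+i-k+1}{d+i-k+1}=k
\]
for every $i\ge 1$, since the remaining terms $\binom{i}{j+i}$ with $j\ge 1$ vanish. In particular $k_{(d)}|^1_1=k$, so \cref{thm:macaulay} gives $h_{d+1}\le k$, proving (i).

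For (ii), assume $h_{d+1}=k$. Since $\id{U}$ is generated in degree $d$, \cref{thm:gotzmannpersistence} applies to the equality $h_{d+1}=k=(h_d)_{(d)}|^1_1$ and yields $h_{d+i}=k_{(d)}|^i_i=k$ for all $i\ge 1$. For the final claim, the Hilbert polynomial of $A/\id{U}$ is then the nonzero constant $k$, so the associated projective scheme is zero-dimensional; combined with the Nullstellensatz (which rules out $\sz(U)=\emptyset$, as that would force $h_j=0$ for $j\gg 0$), this shows $\sz(U)$ is a finite nonempty subset of $\P^{n-1}$.

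The only step that requires any real attention is the bookkeeping of the Macaulay representation of $k$ in degree $d$; once one observes that $k\le d$ forces the greedy algorithm to produce the length-$k$ representation above, both parts of the statement follow mechanically from the cited theorems.
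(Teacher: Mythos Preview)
Your proof is correct and follows essentially the same approach as the paper: identify the $d$-th Macaulay representation of $h_d=k$ as $\sum_{j=d-k+1}^{d}\binom{j}{j}$, deduce $(h_d)_{(d)}|^1_1=k$ to get (i) from Macaulay, and then invoke Gotzmann's Persistence for (ii). Your write-up is in fact slightly cleaner than the paper's, which contains a somewhat confusing appeal to \cref{thm:macaulay}(ii) in the middle of the argument for (ii); you avoid this by going straight from the assumption $h_{d+1}=k=(h_d)_{(d)}|^1_1$ to Gotzmann.
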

\begin{proof}
(i): We have $h_d=\dim A_d/U=\codim U=k\le d$ and thus by \cref{thm:macaulay} (i) we also have $\codim A_1U\le k$.

(ii): Since $h_d\le d$ we get the $d$-th Macaulay representation $h_d=\binom{d}{d}+\dots+\binom{d-h_d+1}{d-h_d+1}$ and thus also $h_{d+1}\le (h_d)_{(d)}|^{1}_{1}=h_d$. By \cref{thm:macaulay} (ii) we know that $h_d\le h_{d+1}$ hence the inequality is an equality and thus (ii) follows from \cref{thm:gotzmannpersistence}.

In (ii) the Hilbert polynomial is the constant polynomial $k$, hence $\sz(U)$ is non-empty and finite. 
\end{proof}

\begin{cor}
\label{cor:degree2d-1}
Let $U\subset A_d$ be a \bpf\ subspace with $\codim U=k\le d$. Then $h_{\id{U}}(2d-1)\le 1$. If $k<d$ then $h_{\id{U}}(2d-1)=0$.
\end{cor}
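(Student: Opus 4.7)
The plan is to iterate the argument from \cref{cor:macaulay_gotzmann}. Set $h_j:=h_{\id{U}}(j)$. Since $h_d=k\le d$, Macaulay's theorem guarantees that the sequence $(h_j)_{j\ge d}$ is non-increasing, and the desired estimate will follow from the stronger claim that $h_{d+j+1}<h_{d+j}$ whenever $h_{d+j}>0$.

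To prove the claim, observe that $h_{d+j}\le h_d=k\le d\le d+j$, so the $(d+j)$-th Macaulay representation of $h_{d+j}$ consists entirely of unit binomial coefficients,
\[
h_{d+j}=\binom{d+j}{d+j}+\binom{d+j-1}{d+j-1}+\dots+\binom{d+j-h_{d+j}+1}{d+j-h_{d+j}+1},
\]
and hence $(h_{d+j})_{(d+j)}|^1_1=h_{d+j}$. If we had $h_{d+j+1}=h_{d+j}$, then since $\id{U}$ is generated in degrees $\le d\le d+j$, \cref{thm:gotzmannpersistence} applied at degree $d+j$ would give $h_{d+j+l}=h_{d+j}$ for every $l\ge 0$. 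Assuming additionally $h_{d+j}>0$, the Hilbert polynomial of $A/\id{U}$ would then be the positive constant $h_{d+j}$, forcing $\sz(U)\subset\P^{n-1}$ to be non-empty and finite, contradicting the \bpf\ hypothesis. This establishes the strict drop.

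Starting from $h_d=k$ and using the strict-decrease property inductively, we obtain $h_{d+j}\le\max(0,k-j)$ for every $j\ge 0$. Specializing to $j=d-1$ yields $h_{2d-1}\le\max(0,k-d+1)$, which equals $0$ when $k<d$ and is at most $1$ when $k=d$, as required. The only delicate point is the Macaulay-representation computation above; after that the corollary is just \cref{cor:macaulay_gotzmann} iterated $d-1$ times, with the \bpf\ assumption rerouting the Gotzmann conclusion into a contradiction at every step.
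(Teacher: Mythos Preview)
Your proof is correct and follows essentially the same strategy as the paper: the paper simply asserts that the Hilbert function drops by at least one in every degree past $d$, citing the previous corollary for the inductive step, while you unpack that induction explicitly (the Macaulay representation in unit binomials, the Gotzmann persistence at degree $d+j$, and the contradiction with base-point-freeness). There is no substantive difference in method.
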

\begin{proof}
The Hilbert function of $\id{U}$ has to be smaller than $(\dots,k,k-1,k-2,\dots,1,0)$ (dimension dropping by at least 1 in every degree), this follows by induction using the last corollary. Now the claim is immediate.
\end{proof}

\begin{dfn}
Let $I\subset A$ be a homogeneous ideal and $p\in A_s$ for some $s\ge 1$. Define the \todfn{ideal quotient}
\[
(I:p):=\bigoplus_{l\ge 0} (I:p)_l
\]
where
\[
(I:p)_l:=\{q\in A_l\colon pq\in I\}\subset A_l
\]
for every $l\ge 0$. If $U\subset A_d$ is a subspace, we write $(U:p):=(\id{U}:p)_{d-s}\subset A_{d-s}$.
\end{dfn}

\begin{lab}
\label{lab:setup_green}
Consider the following setup: Let $I\subset A$ be a homogeneous ideal and $l\in A_1$ a linear form. We have the graded exact sequence
\[
0 \to A/(I:l)(-1) \to A/I \to A/\id{I,l} \to 0.
\]
Let $h_i  = \dim (A/I)_i$ and $c_i  = \dim (A/(I,l))_i$.
\end{lab}

In this situation we have the following theorem due to Green.

\begin{thm}[Green's Hyperplane Restriction Theorem, {\cite[Theorem 1]{green1989}}]
\label{thm:green_thm}
For any $d\ge 0$ and a generic linear form $l\in A_1$ we have
\[
c_d \le (h_d)_{(d)}|^{-1}_0.
\]
\end{thm}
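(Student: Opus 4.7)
The plan is to reduce the inequality to an explicit identity for lex-segment monomial ideals by passing to a generic initial ideal with respect to the \emph{reverse} lexicographic order. The advantage of revlex (over lex, which is fixed elsewhere in the paper) is that setting $x_n = 0$ commutes with taking initial ideals in a useful way, so both sides $h_d$ and $c_d$ survive the reduction to the monomial case.

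First I would make a generic change of coordinates so that the generic linear form $l$ becomes $x_n$, and then replace $I$ by $J := \initial_{\text{revlex}}(I)$. The Hilbert function $h_d = \dim(A/I)_d$ is preserved since taking initial ideals preserves Hilbert functions. To see $c_d$ is preserved one needs the Bayer--Stillman property of the revlex order: in generic coordinates $\initial_{\text{revlex}}(I + (x_n)) = \initial_{\text{revlex}}(I) + (x_n)$, so
\[
\dim(A/(I, x_n))_d = \dim(A/(J, x_n))_d.
\]
After a further generic change of coordinates fixing $x_n$, we may assume $J$ is the revlex generic initial ideal, which by the revlex analogue of \cref{prop:gin_is_stongly_stable} is strongly stable. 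In this setting $c_d$ simply counts the monomials of degree $d$ in the variables $x_1,\dots,x_{n-1}$ that do not lie in $J$.

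Second, I would argue that among all strongly stable monomial ideals with fixed value $h_d = a$, the quantity $c_d$ is maximized by the lex-segment ideal $L$ whose degree-$d$ part consists of the $\binom{n+d-1}{d} - a$ lex-smallest monomials. This is a compression argument: any strongly stable complement of size $a$ in $A_d$ can be transformed into the lex-segment complement by a sequence of Borel-type swaps, and one checks that each swap does not decrease the number of monomials avoiding $x_n$. For the lex-segment $L$ itself the Macaulay decomposition
\[
a = \binom{k_d}{d} + \binom{k_{d-1}}{d-1} + \dots + \binom{k_1}{1}
\]
reads off $c_d$ directly: the $j$-th block consists of monomials using only variables $x_1,\dots,x_{k_j}$ after a prescribed prefix, and the subset of those avoiding $x_n$ contributes exactly $\binom{k_j - 1}{j}$. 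Summing gives $c_d \le a_{(d)}|^{-1}_0$, with equality for $L$.

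The main obstacle is the compression step. Showing that lex-segments are extremal for $c_d$ (and not merely for $h_{d+1}$ as in Macaulay's theorem) requires that the moves used preserve or improve the count of monomials avoiding $x_n$, and this interacts with the strong stability hypothesis in a subtle way. This is exactly why the revlex order enters: its compatibility with the hyperplane section $x_n = 0$ is what makes the reduction to strongly stable monomial ideals possible, and the rectification step has to be executed within this framework rather than using the lex generic initial ideal fixed earlier in the paper.
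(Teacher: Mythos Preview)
The paper does not contain a proof of this statement at all: it is quoted as Green's Hyperplane Restriction Theorem with a citation to \cite{green1989}, and is used as a black box throughout the rest of the paper. So there is no ``paper's own proof'' to compare against.

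That said, your outline is a reasonable sketch of a standard proof of Green's theorem. Passing to the revlex generic initial ideal is exactly the right move, and the reason you give is the correct one: for revlex in generic coordinates one has $\initial(I)+(x_n)=\initial(I+(x_n))$ (the Bayer--Stillman property), so both $h_d$ and $c_d$ are unchanged when $I$ is replaced by a strongly stable monomial ideal. The part you flag as the ``main obstacle'' is indeed where the work lies, but your description of it is slightly off. Once $J$ is strongly stable (with respect to the appropriate variable order), one does not need a separate compression-to-lex-segment argument to bound $c_d$: the inequality $c_d\le (h_d)_{(d)}|^{-1}_0$ follows from a direct induction on $n$ using the decomposition of the degree-$d$ monomials not in $J$ according to the power of $x_n$ they carry, together with the fact that each slice is again strongly stable in fewer variables. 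The lex-segment ideal is the equality case, but you do not have to transform $J$ into it. As written, your compression step (``Borel-type swaps do not decrease the number of monomials avoiding $x_n$'') is asserted rather than argued, and in fact is the entire content of the theorem in the monomial case; so the proposal identifies the right reduction but leaves the combinatorial heart unproved.
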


This can either be understood as a bound for $\dim \id{I,l}_d$ or to understand how many elements in $I$ are divisible by a generic linear form $l$.

Notation-wise this means that if $h_d=\binom{k(d)}{d}+\binom{k(d-1)}{d-1}+\dots+\binom{k(1)}{1}$, then $c_d\le \binom{k(d)-1}{d}+\binom{k(d-1)-1}{d-1}+\dots+\binom{k(1)-1}{1}$.

\section{Subspaces of codimension 1 and 2}
\label{sec:Subspaces of codimension 1 and 2}

Again let $A:=\C[x_1,\dots,x_n]$. We consider subspaces $U$ of codimension 1 and 2 and show that there is a uniform bound for $\codim U^2$ not depending on $n$ or $d$. Furthermore we show \cref{thm:reduction_number_of_variables} which will be our main tool in the next sections to reduce the number of variables.

\begin{lem}
\label{lem:not_contained_in_Pn-3}
Let $U\subset A_d$ be a \bpf\ subspace and $W:=U^\perp$. If $\sz(W)$ is not contained in any linear variety of codimension 2, then there exists a \cc\ such that $\initial(U)_d$ is \bpf.
\end{lem}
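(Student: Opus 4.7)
I plan to reduce the problem to a combinatorial criterion on monomial subspaces, then cook up an explicit \cc\ using the hypothesis. The reduction is: a monomial subspace $V\subset A_d$ is \bpf\ if and only if it contains $y_i^d$ for every variable $y_i$ -- sufficiency is clear since every point of $\P^{n-1}$ has some nonzero coordinate, and necessity follows by evaluating at the coordinate point $[0{:}\dots{:}1{:}\dots{:}0]$. So the task becomes: choose a basis $y_1,\dots,y_n$ of $A_1$ such that $\initial(U)_d$, with respect to the lex order $y_1<\dots<y_n$, contains $y_i^d$ for every $i$. The strategy is to take the first $n-1$ of the $y_i$'s from inside $U$ itself, and use the \bpf\ hypothesis on $U$ to dispatch the last one.

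For the first $n-1$ coordinates: the hypothesis that $\sz(W)$ is not contained in any linear variety of codimension $2$ in $\P^{n-1}$ says precisely that its projective linear span has dimension at least $n-2$, so I can pick $n-1$ points $p_1,\dots,p_{n-1}\in\sz(W)$ whose representative vectors in $\C^n$ are linearly independent. The apolarity identity $\bil{l_p^d}{f}=f(p)$ gives the dictionary $p\in\sz(W)\iff l_p^d\in W^\perp=U$, so the linear forms $l_i:=l_{p_i}$ are linearly independent and satisfy $l_i^d\in U$. Complete arbitrarily to a basis $l_1,\dots,l_n$ of $A_1$ and declare $y_i:=l_i$; this is the desired \cc.

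After this change of basis, $U$ contains $y_1^d,\dots,y_{n-1}^d$, each a single monomial and hence its own leading term in the new lex order, so $y_1^d,\dots,y_{n-1}^d\in\initial(U)_d$ automatically. For $y_n^d$: this is the lex-largest monomial of $A_d$, so $y_n^d\in\initial(U)_d$ reduces to producing some $u\in U$ with nonzero $y_n^d$-coefficient when $u$ is expanded in the $y$-monomial basis. Let $\xi\in\P^{n-1}$ be the point whose $y$-coordinates are $(0,\dots,0,1)$; expanding in the $y$-monomial basis, $y^\alpha(\xi)=\delta_{\alpha,(0,\dots,0,d)}$, so the $y_n^d$-coefficient of any $u$ equals $u(\xi)$. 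Hence the existence of such a $u$ is equivalent to $\xi\notin\sz(U)$, which holds because $U$ is \bpf. This gives $y_n^d\in\initial(U)_d$, and the monomial criterion concludes the proof.

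The one step that needs care is the translation of the algebraic condition ``$u\in U$ has nonzero $y_n^d$-coefficient'' into the geometric condition $\xi\notin\sz(U)$ via evaluation in the new coordinates; once this is done, the \bpf\ hypothesis on $U$ directly provides what is needed and no deeper input is required.
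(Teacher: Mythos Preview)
Your proof is correct and follows essentially the same route as the paper's: use the hypothesis on $\sz(W)$ together with the apolarity identity to place $n-1$ pure $d$-th powers into $U$, change coordinates so these are $x_1^d,\dots,x_{n-1}^d$, and then use base-point-freeness of $U$ to force $x_n^d\in\initial(U)_d$. You spell out more explicitly than the paper why ``some $u\in U$ has nonzero $y_n^d$-coefficient'' is equivalent to $(0{:}\dots{:}0{:}1)\notin\sz(U)$, and why a monomial subspace is \bpf\ exactly when it contains all pure powers, but the underlying argument is the same.
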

\begin{proof}
By assumption there exist linearly independent linear forms $l_1,\dots,l_{n-1}\in A_1$ such that $l_1^d,\dots,l_{n-1}^d\in U$. After a \cc\ we can assume that $x_1^d,\dots,x_{n-1}^d\in U$. Since $x_1<x_2<\dots <x_n$, it holds that $x_n^d\ge x^\alpha$ for any $\alpha\in\Z^n_+,\ |\alpha|=d$. Since $U$ is \bpf, there exists a form in $U$ such that $x_n^d$ occurs in it. Hence $\initial(U)_d$ contains $x_1^d,\dots,x_n^d$ which shows that the subspace $\initial(U)_d$ is \bpf. 
\end{proof}

The first case we look at are subspaces $U\subset A_d$ of codimension 1.

\begin{lem}
\label{lem:basepoint}
If $U\subset A_d$ is a subspace of codimension 1 and $U$ has a \bp\ then $\codim U^2=n$. 
\end{lem}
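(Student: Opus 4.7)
My plan is to normalize $U$ via apolarity and a coordinate change, and then identify $U^2$ as a monomial subspace via a short combinatorial argument.

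\textbf{Normalization.} Since $\codim U=1$, the apolar complement $W:=U^\perp\subset A_d$ is one-dimensional. A base-point $\xi\in\P^{n-1}$ of $U$ means $f(\xi)=0$ for all $f\in U$, which by the identity $\bil{l_\xi^d}{f}=f(\xi)$ (with $l_\xi=\sum_i\xi_ix_i$) is equivalent to $l_\xi^d\in W$. Hence $W=\spn(l_\xi^d)$. After a linear change of variables I may assume $\xi=e_n$, so $W=\spn(x_n^d)$. A direct computation with the apolarity pairing shows that $\bil{x_n^d}{x^\alpha}=\delta_{\alpha,de_n}$ on degree-$d$ monomials, so $U=(x_n^d)^\perp$ is exactly the span of all degree-$d$ monomials different from $x_n^d$; equivalently, $U$ is the hyperplane in $A_d$ cut out by the coefficient of $x_n^d$.

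\textbf{Identifying $U^2$.} Because $U$ is a monomial subspace, so is $U^2$: it is spanned by the products $x^\beta x^\gamma$ with $|\beta|=|\gamma|=d$ and $\beta_n,\gamma_n\le d-1$. I claim that $U^2$ equals the span of those $x^\alpha\in A_{2d}$ with $\alpha_n\le 2d-2$, i.e.\ the degree-$2d$ piece of the ideal of forms vanishing at $\xi=e_n$ to order $\ge 2$. One inclusion is obvious since a product of two forms vanishing at $\xi$ vanishes to order $\ge 2$. For the other, given $\alpha$ with $\alpha_n\le 2d-2$, the non-$n$ coordinates of $\alpha$ sum to at least $2$, so I can split them into two strictly positive parts assigned to $\beta$ and $\gamma$, and then distribute $x_n$'s to make $|\beta|=|\gamma|=d$. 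Setting $a:=\sum_{i<n}\beta_i$, the valid values form the integer interval $[\max(1,d-\alpha_n),\min(d,2d-\alpha_n-1)]$, which is non-empty precisely under the assumption $\alpha_n\le 2d-2$.

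\textbf{Count.} The monomials $x^\alpha$ of degree $2d$ with $\alpha_n\ge 2d-1$ are exactly $x_n^{2d}$ and $x_n^{2d-1}x_i$ for $i=1,\dots,n-1$, a total of $n$ monomials. Hence $\codim U^2=n$, as claimed. The only mild obstacle is the non-emptiness check of the integer interval in the previous paragraph, which needs splitting into the sub-cases $\alpha_n\le d-1$, $\alpha_n=d$, and $d<\alpha_n\le 2d-2$; but in each case the bound is immediate.
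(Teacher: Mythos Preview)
Your proof is correct and follows essentially the same approach as the paper: normalize via a coordinate change so that $U$ is the monomial hyperplane orthogonal to a single $d$-th power, then identify which degree-$2d$ monomials are missing from $U^2$. The only differences are cosmetic (you use $x_n$ where the paper uses $x_1$) and that you spell out in full the combinatorial splitting argument showing every monomial with $\alpha_n\le 2d-2$ lies in $U^2$, whereas the paper leaves this direction implicit.
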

\begin{proof}
We can apply a \cc\ such that $U^\perp=\spn(x_1^d)$, then $U$ is the subspace spanned by all monomials except $x_1^d$. Now we see that for every $1\le i\le n$ the monomial $x_1^{2d-1}x_i$ is not contained in $U^2$ and thus $\codim U^2=n$.
\end{proof}

\begin{prop}
\label{prop:codim1possibledim}
Let $d\ge 2$ and $U\subset A_d$ be a \bpf\ subspace of codimension 1. Then the following hold:
\begin{enumerate}
\item If $d\ge 3$ then $\codim U^2\in\{0,1\}$,
\item if $d=2$ then $\codim U^2\le 2$.
\end{enumerate} 
\end{prop}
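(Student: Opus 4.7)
The plan is to reduce to the monomial case treated in \cref{lem:monomial_codim_1} via a carefully chosen change of coordinates. Since $U$ has codimension $1$, the apolar complement $W := U^\perp$ is spanned by a single form $q\in A_d$, so $\sz(W)=V(q)\subset\P^{n-1}$. First I would verify the hypothesis of \cref{lem:not_contained_in_Pn-3}, namely that $\sz(W)$ is not contained in any linear subvariety of $\P^{n-1}$ of codimension $2$. For $n\ge 3$ this is automatic by dimension: $V(q)$ is a hypersurface of dimension $n-2$, strictly larger than the dimension $n-3$ of any codimension-$2$ linear subvariety. For $n=2$, a codimension-$2$ linear subvariety of $\P^1$ is empty, while $V(q)\neq\emptyset$ over $\C$ since $q\neq 0$.

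Having verified the hypothesis, \cref{lem:not_contained_in_Pn-3} provides a change of coordinates after which $\initial(U)_d$ is a \bpf\ monomial subspace of $A_d$ of codimension $1$. Then \cref{lem:monomial_codim_1} bounds $\codim(\initial(U)_d)^2$ by $1$ if $d\ge 3$ and by $2$ if $d=2$. The standard semicontinuity inequality used in the proof of \cref{prop:ss_is_minimal}, namely $\initial(U)^2\subset\initial(U^2)$ combined with the preservation of Hilbert functions under passage to the initial ideal, yields
\[
\codim U^2 \le \codim(\initial(U)_d)^2,
\]
and the claimed bounds follow at once.

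The argument has essentially no obstacle once one sees that the base-point freeness of $U$ together with $\dim W=1$ already forces $\sz(W)$ to be of dimension too large to fit into a codimension-$2$ linear subvariety. The only mildly delicate point is the degenerate case $n=2$, where the codimension-$2$ condition of \cref{lem:not_contained_in_Pn-3} unpacks to mere non-emptiness of $V(q)$ and one needs algebraic closedness; apart from that, the proof is a direct combination of \cref{lem:not_contained_in_Pn-3}, \cref{lem:monomial_codim_1}, and the initial-ideal comparison from \cref{prop:ss_is_minimal}.
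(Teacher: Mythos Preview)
Your proposal is correct and follows essentially the same route as the paper: write $W=U^\perp=\spn(q)$, observe that the hypersurface $\sz(W)=V(q)$ cannot be contained in a linear variety of codimension $2$, apply \cref{lem:not_contained_in_Pn-3} to make $\initial(U)_d$ \bpf, and then combine \cref{lem:monomial_codim_1} with the inequality $\codim U^2\le\codim(\initial(U)_d)^2$. Your explicit treatment of the degenerate case $n=2$ is a harmless elaboration of the paper's one-line remark that no hypersurface lies in a codimension-$2$ linear variety.
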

\begin{proof}
Write $W:=U^\perp=\spn(q)$ for some $q\in A_d$. No hypersurface is contained in a linear variety of codimension 2, hence by \cref{lem:not_contained_in_Pn-3} we can apply a \cc\ such that the subspace $\initial(U)_d$ is \bpf. Then $\dim U^2=\dim \initial (U^2)_{2d}\ge\dim (\initial(U)^2)_{2d}$. From \cref{lem:monomial_codim_1} we get
\[
\codim (\initial(U)^2)_{2d}\in\{0,1\}, \text{ if } d\ge 3 \text{ and } \le 2 \text{ if } d=2.
\]
\end{proof}

\begin{rem}
\begin{enumerate}[leftmargin=0.6cm]
\item[]
\item With a different method one can also improve this in the case $d=2$. Namely $\codim U^2\in\{0,2\}$, which means that the case $\codim U^2=1$ is actually not possible.
\item \cref{prop:codim1possibledim} shows that if $d\ge 3$ then for any $f\in\interior(\Sigma_{n,2d})$ and $F\subset \gram(f)$ a face of rank $\dim A_d-1$ it holds that $\dim F=\binom{r+1}{2}-\dim A_{2d} +\epsilon$ with $\epsilon\in\{0,1\}$. This is in fact also true as long as $f\notin\partial P_{n,2d}$.
\end{enumerate}
\end{rem}

Now we turn to the codimension 2 case. We find a bound for $\codim U^2$ by reducing either to monomial subspaces or to subspaces of binary forms.

First we show how to reduce the number of variables. The idea of the proof is the following: If $U\subset A[x_{n+1}]_d=\C[x_1,\dots,x_{n+1}]_d$ is a subspace of the form $U=x_{n+1}A[x_{n+1}]_{d-1}\oplus U'$ with $U'\subset A_d$, then $U^2=x_{n+1}^2A[x_{n+1}]_{2d-2}\oplus x_{n+1}A_{d-1}U'\oplus (U')^2$. Then $\codim U^2=\codim (U')^2+\codim A_{d-1}U'$. If $U$ does not have this nice form we have to argue slightly more carefully using the same idea.

\begin{thm}
\label{thm:reduction_number_of_variables}
Let $U\subset A_d$ be a subspace of codimension $k$. If there exists $2\le m\le n\ (R:=A(m))$ such that $U':=U\cap R_d$ satisfies $\codim_{R_d} U'=k$, then
\[
\codim_{A_{2d}} U^2\le (n-m)\codim_{R_{2d-1}} U'R_{d-1}+\codim_{R_{2d}} (U')^2.
\]
\end{thm}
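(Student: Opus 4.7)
The plan is to decompose $A_{2d}$ modulo $U^2$ into a ``pure-$R$'' part (contributing $\codim_{R_{2d}}(U')^2$) and parts divisible by the ``new'' variables $y_j := x_{m+j}$ for $j=1,\dots,n-m$ (each contributing $\codim_{R_{2d-1}} U' R_{d-1}$). Call the \emph{$y$-degree} of a monomial $x^\gamma y^\alpha \in A$ the integer $|\alpha|$. As a preliminary, inclusion-exclusion gives
\[
\dim(R_d + U) = \dim R_d + \dim U - \dim U' = \dim A_d,
\]
so $R_d + U = A_d$, and every $p \in A_d$ admits a (non-unique) decomposition $p = u + s$ with $u \in U$ and $s \in R_d$.

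The heart of the proof is to establish
\[
A_{2d} \;=\; U^2 \;+\; R_{2d} \;+\; \sum_{j=1}^{n-m} y_j R_{2d-1}
\]
by strong induction on the $y$-degree $\ell$ of a monomial $y^\alpha r \in A_{2d}$, where $r$ is a monomial in $R$. The cases $\ell \in \{0,1\}$ are immediate: $y^\alpha r$ lies in $R_{2d}$ or in some $y_j R_{2d-1}$. For $\ell \ge 2$, factor $y^\alpha r = p_1 p_2$ with each $p_i \in A_d$ a monomial containing at least one $y$-variable; this is possible because $\ell \in [2,2d]$ admits a splitting $\alpha = \beta_1 + \beta_2$ with $|\beta_i| \in [1,d]$, and $r$ can be distributed to achieve $\deg p_i = d$. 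By the preliminary, write $p_i = u_i + s_i$ with $u_i \in U$ and $s_i \in R_d$, and expand
\[
p_1 p_2 \;=\; u_1 u_2 + (u_1 s_2 + s_1 u_2 + s_1 s_2).
\]
Here $u_1 u_2 \in U^2$. Since each $s_i \in R_d$ has $y$-degree zero, $u_i = p_i - s_i$ has $y$-degree equal to that of $p_i$, namely $|\beta_i| \le \ell - 1$ (as $|\beta_{3-i}| \ge 1$). Consequently the remainder has $y$-degree at most $\ell - 1$, and by the inductive hypothesis lies in $U^2 + R_{2d} + \sum_j y_j R_{2d-1}$, completing the step.

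A parallel argument yields the containment $y_j U' R_{d-1} \subset U^2 + R_{2d}$: for $u' \in U'$ and $r \in R_{d-1}$, write $y_j r = u + s$ with $u \in U$ and $s \in R_d$, so that $y_j r u' = u u' + s u' \in U^2 + R_{2d}$. To conclude, use the short exact sequence
\[
0 \to (R_{2d} + U^2)/U^2 \to A_{2d}/U^2 \to A_{2d}/(R_{2d} + U^2) \to 0.
\]
The left term identifies with $R_{2d}/(R_{2d} \cap U^2)$ and has dimension at most $\codim_{R_{2d}}(U')^2$, since $(U')^2 \subset R_{2d} \cap U^2$. By the identity above, the right term is spanned by the images of $y_j R_{2d-1}$ for $j=1,\dots,n-m$; each image has dimension at most $\codim_{R_{2d-1}} U' R_{d-1}$ since $y_j U' R_{d-1}$ maps to zero there. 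Summing the two contributions gives the theorem.

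The main obstacle is the inductive identity: the ``nice'' decomposition from the author's sketch, where $y_j \cdot A_{d-1}$ sits inside $U$, is not available in general. The factorization trick sidesteps this by placing at least one $y$-variable in each of the factors $p_1,p_2$, ensuring that whichever correction $u_i s_{3-i}$ against $R_d$ appears has strictly smaller $y$-degree than $y^\alpha$, which drives the induction down to the base cases.
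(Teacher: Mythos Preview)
Your proof is correct and takes a genuinely different route from the paper's. Both arguments rest on the same preliminary fact $R_d+U=A_d$ (the paper phrases it as $V\oplus\spn(q_1,\dots,q_s)=S$), but they exploit it differently. The paper fixes the lex ordering with $x_1<\dots<x_n$ and works with initial ideals: for a monomial $x^\alpha$ of $y$-degree $\ge 2$ it factors $x^\alpha=x^\beta x^\gamma$ with each factor containing a $y$-variable, observes that $x^\beta+p_\beta,\,x^\gamma+p_\gamma\in V+W\subset U$ for suitable $p_\beta,p_\gamma\in R_d$, and concludes $x^\alpha=\initial((x^\beta+p_\beta)(x^\gamma+p_\gamma))\in\initial(U^2)$; the $y$-degree-$1$ part is handled by $\initial(U'(V+W))\supset\bigoplus_i x_i\,\initial(U'R_{d-1})$, and a single dimension count finishes. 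Your argument avoids monomial orderings entirely and instead proves the identity $A_{2d}=U^2+R_{2d}+\sum_j y_jR_{2d-1}$ by strong induction on $y$-degree, then bounds the two pieces of the filtration $U^2\subset R_{2d}+U^2\subset A_{2d}$ via the containments $(U')^2\subset R_{2d}\cap U^2$ and $y_jU'R_{d-1}\subset U^2+R_{2d}$. The paper's method is a one-shot argument once the ordering is in place; yours is more elementary and self-contained, at the cost of the extra inductive step needed because the correction terms $u_is_{3-i}$ may still carry positive $y$-degree.
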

\begin{proof}
Let $\mm=\id{x_{m+1},\dots,x_n}\subset A_d$ and $S:=\sum_{i=m+1}^n x_iA_{d-1}$ and write 
\[
U=U'\oplus V \oplus W.
\]
with $U'\subset R_d,\ V\subset S$ and $W=\spn(p_i+q_i\colon i=1,\dots,s)$ where $p_i\in R_d$ and $q_i\in S$. Here the $q_i$ cannot be zero since otherwise $U\cap R_d$ would have the wrong dimension. By assumption $\codim_{R_d} U'=k$ and thus
\begin{equation}
\label{eq:eq1}
V \oplus \spn(q_1,\dots,q_s)=S.
\end{equation}
Calculating $U^2$ we get
\[
U^2=(U')^2+(V+W)^2+U'(V+W).
\]
Since we are working with the lex-ordering, any monomial containing any $x_i,\ i\ge m+1$ is bigger than any monomial in $R$.

Firstly fix any monomial $x^\alpha$ such that $\alpha\in\Z^{n}_+, |\alpha|=2d$ and $\sum_{j\ge m+1} \alpha_j\ge 2$ then there exist $\beta,\gamma\in\Z^{n}_+,|\beta|=|\gamma|=d$ and $x_i,x_j,\ i,j\ge m+1$ such that $x_i|x^\beta, x_j|x^\gamma$ and $x^\alpha=x^\beta x^\gamma$. Then we have $x^\beta + p_\beta,\, x^\gamma + p_\gamma\in V+W$ for some $p_\beta,p_\gamma\in R_d$. Hence
\[
x^\alpha=\initial((x^\beta + p_\beta)(x^\gamma + p_\gamma))\in\initial((V+ W)^2)_{2d}\subset\initial(U^2)_{2d}.
\]
Secondly we have
\[
\initial(U'(V+W))=\initial(U'S)\supset\initial\left(\bigoplus_{i=m+1}^n x_i (U'R_{d-1})\right) \text{ by \cref{eq:eq1}}.
\]
This shows that for every $i=m+1,\dots, n$
\[
\mm^2A_{d-2},\initial((U')^2),\initial(x_iU'R_{d-1})\subset\initial(U^2).
\]
Counting dimensions we see that 
\[
\codim_{A_{2d}}\initial(U^2)\le (n-m)\codim_{R_{2d-1}} U'R_{d-1}+\codim_{R_{2d}} (U')^2.
\]
\end{proof}

\begin{rem}
The bound is sharp whenever $U=(x_{m+1},\dots,x_n)A_{d-1}\oplus U'$ as can be seen from the comment above \cref{thm:reduction_number_of_variables}.
\end{rem}

\begin{cor}
\label{cor:reduction_number_of_variables}
If the subspaces $U,U'$ in \cref{thm:reduction_number_of_variables} are \bpf\ and $k\le d-1$ then
\[
\codim U^2\le \codim (U')^2.
\]
\end{cor}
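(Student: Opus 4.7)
The plan is to apply \cref{thm:reduction_number_of_variables} directly and show that the first term on the right-hand side vanishes under the bpf and small-codimension hypotheses. Concretely, the bound given there reads
\[
\codim_{A_{2d}} U^2 \le (n-m)\,\codim_{R_{2d-1}} U'R_{d-1} + \codim_{R_{2d}} (U')^2,
\]
so it suffices to prove $U'R_{d-1} = R_{2d-1}$, i.e.\ that $\codim_{R_{2d-1}} U'R_{d-1} = 0$.

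The main tool for this step is \cref{cor:degree2d-1}. Since $U' \subset R_d$ is bpf with $\codim_{R_d} U' = k \le d-1 < d$, the corollary yields $h_{\id{U'}}(2d-1) = 0$. Because $U'$ is concentrated in degree $d$, the degree-$(2d-1)$ part of the ideal $\id{U'}$ is exactly $U'R_{d-1}$, so $h_{\id{U'}}(2d-1) = \codim_{R_{2d-1}} U'R_{d-1}$. Combining these two identities gives $U'R_{d-1} = R_{2d-1}$, which kills the $(n-m)$-term and leaves the desired inequality.

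There is really no obstacle here — the work has already been done in \cref{thm:reduction_number_of_variables} and \cref{cor:degree2d-1}. The only point worth double-checking is the translation between $h_{\id{U'}}(2d-1)$ and $\codim_{R_{2d-1}} U'R_{d-1}$, which follows because $\id{U'}$ is generated in degree $d$, so its piece in any degree $\ge d$ is $U'$ times the appropriate power of the maximal ideal. Once that is observed, the corollary is a one-line consequence of the two previous results.
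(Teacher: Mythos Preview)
Your proof is correct and follows exactly the same approach as the paper's: apply \cref{cor:degree2d-1} to the \bpf\ subspace $U'\subset R_d$ with $k\le d-1<d$ to conclude $h_{\id{U'}}(2d-1)=0$, and then observe that this kills the first summand in the bound from \cref{thm:reduction_number_of_variables}. The paper's proof is just a terser version of what you wrote.
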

\begin{proof}
By \cref{cor:degree2d-1} the degree $2d-1$ component of $R/\id{U'}$ has dimension 0. Therefore the result follows from \cref{thm:reduction_number_of_variables}. 
\end{proof}

\begin{thm}
\label{thm:codimension2bounds}
Let $U\subset A_d$ be a \bpf\ subspace of codimension 2. Then the following hold:
\begin{enumerate}
\item If $d=2$ then $\codim U^2\le 6$,
\item if $d\ge 3$ then $\codim U^2\le 4$.
\end{enumerate}
For $d\le 4$ the bounds are tight.
\end{thm}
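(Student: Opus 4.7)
The plan is to combine the initial-ideal inequality $\codim U^2 \le \codim(\initial(U))^2$ (from \cref{prop:ss_is_minimal}) with a case analysis on the common zero locus $\sz(W) \subset \P^{n-1}$ of the apolar pencil $W := U^\perp$; recall that $U$ is \bpf\ precisely when $W$ contains no $d$-th power $\ell^d$. If $\sz(W)$ is not contained in any codimension-$2$ linear variety of $\P^{n-1}$, then by \cref{lem:not_contained_in_Pn-3} there is a change of coordinates making $\initial(U)_d$ itself a \bpf\ monomial subspace of codimension $2$, and \cref{lem:monomial_subspaces_of_codimension_2} gives the required bounds $\le 6$ for $d = 2$ and $\le 4$ for $d \ge 3$.

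The remaining case is $\sz(W) \subset L$ for some codimension-$2$ linear subvariety $L$. Then $\sz(W)$ has pure dimension at least $n-3$ by Krull, equal to $\dim L$, and $L$ is irreducible, so $\sz(W) = L$ set-theoretically, and $w_1, w_2$ must be coprime (else a dim-$(n{-}2)$ component would appear). Passing to radicals gives $w_1, w_2 \in I(L)$, and after a coordinate change I may assume $W \subset \id{x_{n-1}, x_n} \cap A_d$, which dually gives $U \supset \C[x_1, \dots, x_{n-2}]_d$. For $d = 2$ I would rule out this case: a local length calculation at a generic point of $L$ (the complete intersection must have length $d^2 = 4$ along $L$) forces both $w_1, w_2$ to have vanishing linear parts along $L$, so $W \subset \spn(x_{n-1}^2, x_{n-1}x_n, x_n^2)$; any $2$-dimensional subspace of this $3$-dimensional space is a projective line in $\P^2$, meeting the Veronese conic $\{(cx_{n-1}+dx_n)^2\}$ by Bezout, so $W$ contains a square, contradicting \bpf. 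Hence only the first case occurs for $d = 2$, giving the bound $6$.

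For $d \ge 3$ the hypothesis $k = 2 \le d - 1$ of \cref{cor:reduction_number_of_variables} is satisfied. I would use the freedom in choosing $(x_{n-1}, x_n)$ within the pencil cutting out $L$ so that, setting $R = A(n-1)$, the intersection $U' := U \cap R_d$ is \bpf\ of codimension $2$ in $R_d$, yielding $\codim U^2 \le \codim (U')^2$ and allowing an induction on $n$ that eventually lands in the first case or at a small-$n$ base case checkable from \cref{prop:ss_is_minimal} and the data of \cref{rem:example_ss}. The main obstacle is arranging this last step: codimension $2$ requires the projection $\pi_R(W) \subset x_{n-1} R_{d-1}$ to be two-dimensional, and \bpf\ requires $x_{n-1}^d \notin \pi_R(W)$ (otherwise $U'$ acquires a basepoint at $(0{:}\dots{:}0{:}1)$). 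Both are codimension-$1$ obstructions on the admissible two-parameter family of coordinate changes, and hence avoidable for generic choice, but verifying this cleanly is the delicate point of the argument.
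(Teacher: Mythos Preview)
Your first case and your $d=2$ argument are fine; in fact the $d=2$ treatment is a pleasant alternative to the paper's. The paper does \emph{not} rule out the second case when $d=2$ but instead runs its case~(b) argument there, whereas you show directly that $\sz(W)=L$ forces $W\subset\C[x_{n-1},x_n]_2$ via the length-$4$ computation, and then any line in $\P(\C[x_{n-1},x_n]_2)$ meets the Veronese conic. That is correct and slightly more conceptual.

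The genuine gap is in your $d\ge 3$ inductive step, and it is not merely a matter of cleaning up. You propose to vary the basis of the pencil $\spn(x_{n-1},x_n)$ and then intersect with $R=A(n-1)$, i.e.\ to project $W$ by some $l\in\spn(x_{n-1},x_n)$. But this one-parameter family is too small: take $W=\spn(x_{n-1}^d+x_n^d,\;x_{n-1}^{d-1}x_n)\subset\C[x_{n-1},x_n]_d$ (which is \bpf, with $\sz(W)=V(x_{n-1},x_n)$). For \emph{every} $l=ax_{n-1}+bx_n$ there is a nonzero $w\in W$ divisible by $l$ (solve $\alpha((-b/a)^d+1)+\beta(-b/a)^{d-1}=0$), so $\dim\ol W=1$ for every such $l$, and your codimension-$2$ requirement already fails before \bpf\ enters. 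Thus restricting to the pencil cannot work in general.

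If instead you project by a generic $l\in A_1$, \cref{prop:green_application_dimW} gives $\dim\ol W=2$, but now the \bpf\ preservation you need is exactly the $k=2$ case of \cref{conj:bpf_intersection}, which the paper leaves open for $n\le 3k$. So your induction-on-$n$ route runs straight into an unresolved statement. The paper avoids this entirely: in case~(a) it jumps directly to two variables (where $m(2,d,2)=4$ suffices), and in case~(b) it does not reduce $n$ at all but instead exhibits an explicit coordinate change after which $\initial(U)_d$ is \bpf, then applies \cref{lem:monomial_subspaces_of_codimension_2}. That case~(b) manoeuvre---locating an $l\in\C[x_1,x_2]_1$ with $l^d$ in an auxiliary codimension-$1$ space and choosing a compatible monomial order---is the step your outline is missing.
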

\begin{proof}
Let $W=U^\perp$. If $\sz(W)\neq\sz(l,l')$ for any two linear forms $l,l'\in A_1$ then the claim follows from \cref{lem:not_contained_in_Pn-3} and \cref{lem:monomial_subspaces_of_codimension_2}.

Now assume that $\sz(W)=\sz(x_1,x_2)$ and thus $x_3^d,\dots,x_n^d\in U,\ x_1^d,x_2^d\notin U$. 
Hence we can write
\[
U=\spn(x^\alpha+\mu_\alpha x_1^d+\lambda_\alpha x_2^d\colon \exists i\ge 3\colon x_i | x^\alpha)\oplus U'
\]
where $U'\subset\C[x_1,x_2]_d$ is a subspace of codimension 2. We distinguish two cases:
\begin{itemize}
\item[(a)] For all $\alpha$ we have $\mu_\alpha=\lambda_\alpha=0$,
\item[(b)] there exists $\alpha$ such that $(\mu_\alpha,\lambda_\alpha)\neq (0,0)$.
\end{itemize}

(a): Here $U$ has the form
\[
U=\spn(x_3,\dots,x_n)A_{d-1}\oplus U'.
\]
If $d=2$ this case cannot appear since $\dim U'=1$ and thus $U$ has a \bp. Hence we can assume that $d \ge 3$. Since $U$ is \bpf\ it follows that $U'$ is \bpf\ as a subspace of $\C[x_1,x_2]_d$. 
Then $\codim U^2 \le \codim U'^2\le 4$ by \cref{cor:reduction_number_of_variables}.

(b): Fix $\alpha$ such that $(\mu_\alpha,\lambda_\alpha)\neq (0,0)$. Consider the subspace 
\[
V:=U'\oplus \spn(\mu_\alpha x_1^d+\lambda_\alpha x_2^d)\subset\C[x_1,x_2]_d.
\]
This subspace has codimension 1 and thus $V^\perp=\spn(h)$ for some $h\in\C[x_1,x_2]_d$. Especially there exists $l\in\C[x_1,x_2]_1$ such that $l^d\in V$, namely the one evaluating $h$ in one of its zeroes. This shows that $x^\alpha+a l^d\in U$ or $a l^d\in U$ for some $a\neq 0$. Write $x^\alpha=x_1^{\alpha_1}x_2^{\alpha_2}M$ with $M\in\C[x_3,\dots,x_n]$. 
Applying the \cc\ to $\C[x_1,x_2]$ that maps $l$ to $x_2$ shows that $Mg+x_2^d\in U$ with $g\in\C[x_1,x_2]$ the image of $x_1^{\alpha_1}x_2^{\alpha_2}$ under the \cc. Now take any monomial ordering such that $x_1>x_2>\dots >x_n$ and such that $x_2^d$ is greater than any monomial in $Mg$ (for example a block ordering on $\{x_1,x_2\}$ and $\{x_3,\dots,x_n\}$ with grlex on each block).
Wrt this ordering $\initial(U)_d$ is \bpf. Now we finish as earlier, $\codim U^2\le \codim \initial(U)_d^2$ and using \cref{lem:monomial_subspaces_of_codimension_2} we get the bounds we wanted.

The bounds are tight for $d\le 4$ by the proof of \cref{lem:monomial_subspaces_of_codimension_2}.
\end{proof}

\begin{rem}
As can be seen from the proof we actually showed slightly more than mentioned in the theorem: For $d\le 4$ the bound $\codim U^2\le 4$ is tight for binary forms and thus for any $n\ge 2$. 

One can show that for $d\ge 5$ and $n=2$, it holds that $\codim U^2\le 3$ for \bpf\ subspaces of codimension 2, hence improving the bound by 1. It is not clear however if $\codim U^2=3$ is possible for $d\ge 5$.
\end{rem}

\section{Reducing the degree}
\label{sec:Reducing the degree}

In this section we show that for $d\ge k$ the function $d\mapsto m(n,d,k)$ is non-increasing for every fixed $n,k$. By definition this is equivalent to showing that for every subspace $U\subset A_d$ of codimension $k$ there exists a subspace $V\subset A_{d-1}$ of codimension $k$ such that $\codim U^2\le \codim V^2$ whenever $d > k$.

For the next proofs let us recall that for any subspace $U\subset A_d$ we have the exact sequence in \ref{lab:setup_green}:
\[
0 \to A_{d-1}/(U:l) \to A_d/U \to A_d/\id{U,l}_d \to 0.
\]
Especially if $\id{U,l}_d=A_d$ it follows that $\codim (U:l) = \codim U$.

\begin{lem}
\label{lem:green_small_k}
Let $U\subset A_d$ be a subspace of codimension $k$ and $k\le d$, then for a generic linear form $l\in A_1$ we have $\id{U,l}_d=A_d$ and $\codim (U:l)=\codim U$.
\end{lem}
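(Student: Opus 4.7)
The plan is to apply Green's Hyperplane Restriction Theorem (\cref{thm:green_thm}) directly to the setup in \ref{lab:setup_green}, using the hypothesis $k\le d$ to force the Macaulay bound to vanish.

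First I would compute the $d$-th Macaulay representation of $h_d=k$. Since $k\le d$, one can write
\[
k=\binom{d}{d}+\binom{d-1}{d-1}+\dots+\binom{d-k+1}{d-k+1},
\]
with strictly decreasing indices $d>d-1>\dots>d-k+1\ge 1$, and each binomial equals $1$, so this is a valid $d$-th Macaulay representation. Applying Green's bound to $I=\id{U}$ and a generic linear form $l\in A_1$ yields
\[
c_d\le k_{(d)}|^{-1}_0=\binom{d-1}{d}+\binom{d-2}{d-1}+\dots+\binom{d-k}{d-k+1}.
\]
Each summand is of the form $\binom{a}{a+1}$, which vanishes by the convention $\binom{a}{b}=0$ for $a<b$. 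Hence $c_d=0$, that is, $\id{U,l}_d=A_d$.

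Then I would feed this into the graded exact sequence recalled just before the lemma,
\[
0 \to A_{d-1}/(U:l) \to A_d/U \to A_d/\id{U,l}_d \to 0.
\]
Since the right-hand term vanishes in degree $d$, the middle map is an isomorphism, giving $\dim A_{d-1}/(U:l)=\dim A_d/U=k$, i.e., $\codim(U:l)=\codim U$. The hypothesis $k\le d$ is precisely what makes the Macaulay representation of $k$ in degree $d$ consist only of binomials $\binom{a}{a}$, so that the shifted quantity $k_{(d)}|^{-1}_0$ collapses to zero; this is the only non-routine point, and it is essentially a direct calculation once one writes down the correct representation.
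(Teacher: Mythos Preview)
Your proof is correct and follows essentially the same route as the paper: write the $d$-th Macaulay representation of $k$ as a sum of $k$ unit binomials (possible precisely because $k\le d$), apply Green's Theorem to get $c_d=0$, and read off $\codim(U:l)=\codim U$ from the exact sequence in \ref{lab:setup_green}. Your write-up is slightly more explicit about why the shifted binomials vanish, but there is no substantive difference.
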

\begin{proof}
With the notation from \cref{lab:setup_green} with $I=\id{U}$ we have
\[
h_d=k=\sum_{i=0}^{k-1} \binom{d-i}{d-i}
\]
since $k\le d$.
Hence by Green's Theorem 
\[
c_d\le (h_d)_{(d)}\vert^{-1}_0 = \sum_{i=0}^{k-1} \binom{d-i-1}{d-i}=0
\]
which means $A_d/\id{U,l}_d=0$ and therefore the first claim follows.

The second one is immediate from the exact sequence above.
\end{proof}

\begin{thm}
\label{thm:deg_reduction}
Let $U\subset A_d$ be a subspace of codimension $k\le d$ and let $l\in A_1$ be a generic linear form. With $V:=(U:l)$ the following inequality holds
\[
\codim U^2 \le \codim UV.
\]
If furthermore $k\le d-1$ then
\[
\codim U^2 \le \codim V^2.
\]
\end{thm}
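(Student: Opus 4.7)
The plan is to use the defining containment $lV\subset U$ twice and combine it with the exact sequence of \ref{lab:setup_green}. Multiplying $lV\subset U$ by $U$ gives $lUV\subset U^2$, i.e.\ $UV\subset(\id{U^2}:l)_{2d-1}$. Multiplying instead by $V$ gives $lV^2\subset UV$, i.e.\ $V^2\subset(\id{UV}:l)_{2d-2}$. These two inclusions will drive the two inequalities respectively.

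Next I apply \ref{lab:setup_green} with $I=\id{U^2}$ at degrees $2d-1,2d$ and with $I=\id{UV}$ at degrees $2d-2,2d-1$, and take dimensions of the pieces:
\[
\codim U^2 \;=\; \dim A_{2d-1}/(U^2:l)+\dim A_{2d}/(U^2+lA_{2d-1}),
\]
\[
\codim UV \;=\; \dim A_{2d-2}/(UV:l)+\dim A_{2d-1}/(UV+lA_{2d-2}).
\]
Combined with the inclusions above (which say $\dim A_{2d-1}/(U^2:l)\le\codim UV$ and $\dim A_{2d-2}/(UV:l)\le\codim V^2$), both inequalities will follow once the cokernel terms $\dim A_{2d}/(U^2+lA_{2d-1})$ and $\dim A_{2d-1}/(UV+lA_{2d-2})$ are shown to vanish.

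For the first cokernel I invoke \cref{lem:green_small_k} on $U$ (valid since $k\le d$), which yields $A_d=U+lA_{d-1}$ for a generic $l$; then $A_{2d}=A_d\cdot A_d=(U+lA_{d-1})^2\subset U^2+lA_{2d-1}$, proving $\codim U^2\le\codim UV$. For the second cokernel the hypothesis $k\le d-1$ lets me apply \cref{lem:green_small_k} also to $V\subset A_{d-1}$ (which has codimension $k$ by the first application), giving $A_{d-1}=V+lA_{d-2}$. Expanding $A_{2d-1}=A_d\cdot A_{d-1}=(U+lA_{d-1})(V+lA_{d-2})\subset UV+lA_{2d-2}$ then yields $\codim UV\le\codim V^2$, so that $\codim U^2\le\codim V^2$.

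The delicate point I anticipate is that a single $l\in A_1$ must be generic for both applications of \cref{lem:green_small_k} at once: once for the fixed ideal $\id{U}$, and once for the ideal $\id{V}$, which itself depends on $l$. Both are rank conditions depending algebraically on $l$, so they cut out Zariski-open subsets of $A_1$; the second is seen (after unraveling $V=(U:l)$) to be equivalent to $\codim(U:l^2)=k$ in $A_{d-2}$. The obstacle is to check that these two open subsets intersect non-trivially, which amounts to showing that one generic $l$ can be chosen so that the entire chain $\id{U}\supset(\id{U}:l)\supset(\id{U}:l^2)$ retains the maximal expected codimension at every step. This is a standard iterated-genericity statement that can be settled by passing to a generic initial ideal.
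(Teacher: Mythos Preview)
Your proposal is correct and follows essentially the same route as the paper: the same two inclusions $UV\subset(U^2:l)$ and $V^2\subset(UV:l)$, the same use of the exact sequence in \ref{lab:setup_green}, the same application of \cref{lem:green_small_k} to kill the cokernel terms, and the same identification of the one delicate point (that the condition $\id{V,l}_{d-1}=A_{d-1}$ unravels to $\codim(U:l^2)=k$, which is not covered by a naive appeal to genericity because $V$ depends on $l$). The paper resolves this last point exactly as you suggest, by passing to the generic initial ideal: after a generic coordinate change one may take $l=x_1$ and $\initial(U)=\gin(U)$, and then the strong stability of $\gin(U)_d$ together with $k\le d-1$ forces every monomial in $(\gin(U)_d)^\perp$ to be divisible by $x_1^2$, whence $\gin(U)_d+x_1^2A_{d-2}=A_d$ and so $\id{U,l^2}_d=A_d$; you should write this step out rather than leave it as ``standard''.
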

\begin{proof}
Since $l$ is generic and $k\le d$ it follows from \cref{lem:green_small_k} that $\codim V=\codim U$ and $\id{U,l}_d=A_d$. Furthermore we have 
\[
A_{2d}=(\id{U,l}_d)^2\subset \id{U^2,l}_{2d},
\] 
hence $\codim (U^2:l)=\codim U^2$ by the exact sequence in \ref{lab:setup_green}. Since $UV\subset (U^2:l)$ we have
\[
\codim U^2=\codim (U^2:l)\le \codim UV.
\]
Now we do the same for $UV$. If we show that $\id{V,l}_{d-1}=A_{d-1}$, then
\[
A_{2d-1}=\id{U,l}_d\id{V,l}_{d-1}\subset \id{UV,l}_{2d-1}.
\]
Thus $\codim (UV:l)=\codim UV$ and $V^2\subset (UV:l)$ which means $\codim UV\le \codim V^2$.

It is left to show that $\id{V,l}_{d-1}=A_{d-1}$. This is equivalent to showing that $\codim (V:l)=\codim V$. Since $((U:l):l)=(U:l^2)$ this again is equivalent to showing that $\codim (U:l^2)=\codim V=\codim U$ or $\id{U,l^2}_d=A_d$. Since $l\in A_1$ is generic we can also apply a generic \cc\ to $U$, hence assume that $\initial(U)=\gin(U)$ and $l=x_1$. Then
\[
\dim\id{U,x_1^2}_d=\dim\initial(\id{U,x_1^2})_d\ge \dim \id{\initial(U),x_1^2}_d.
\]
Here the first equality follows from the fact that any ideal and its initial ideal have the same Hilbert function, the second one is immediate since $\initial(\id{U,x_1^2})\supset\id{\initial(U),x_1^2}$.

It is therefore enough to show that $\id{\gin(U),x_1^2}_d=A_d$. Since $k\le d-1$ every monomial of degree $d$ not contained in $\gin(U)_d$ is divisible by $x_1^2$: The lowest power $s$ of $x_1$ such that $x_1^sM$ is not contained in $\gin(U)_d$ for a monomial $M$ of degree $d-s$ is 2. This is realized by $\gin(U)_d^\perp=\spn(x_1^d,x_1^{d-1}x_2,\dots,x_1^2x_2^{d-2})$ and $k=d-1$. But this means exactly that $\gin(U)_d+x_1^2A_{d-2}=A_d$.
\end{proof}

\begin{cor}
\label{cor:ss_bound_independent_of_d}
If $k\le d$ then 
\[
m(n,d,k) \le m(n,k,k).
\]
\end{cor}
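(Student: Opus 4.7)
The plan is a short induction on $d-k$, using \cref{thm:deg_reduction} as the engine. The base case $d=k$ is trivial: the inequality becomes $m(n,k,k)\le m(n,k,k)$.

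For the inductive step, suppose $d>k$, so $k\le d-1$. Let $U\subset A_d$ be any subspace of codimension $k$. Picking a generic linear form $l\in A_1$ and setting $V:=(U:l)\subset A_{d-1}$, \cref{lem:green_small_k} gives $\codim V=k$, and the stronger half of \cref{thm:deg_reduction} (which applies precisely because $k\le d-1$) yields
\[
\codim U^2\le \codim V^2\le m(n,d-1,k).
\]
By the induction hypothesis, applied in degree $d-1\ge k$, we have $m(n,d-1,k)\le m(n,k,k)$. Taking the supremum over all admissible $U$ gives $m(n,d,k)\le m(n,k,k)$, closing the induction.

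There is really no obstacle here; all the serious work is already packaged inside \cref{thm:deg_reduction}, and this corollary is simply its iteration. The one subtlety worth flagging is the role of the strict inequality $k\le d-1$ in \cref{thm:deg_reduction}: the weaker conclusion $\codim U^2\le \codim UV$ would not immediately give a subspace in $A_{d-1}$ with the same codimension whose square we can bound, so the induction genuinely depends on the sharper form. This is also the reason the corollary stops the descent at degree $k$ rather than continuing further.
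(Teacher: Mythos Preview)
Your proof is correct and essentially identical to the paper's: both argue by induction on $d$, using \cref{thm:deg_reduction} to pass from a codimension-$k$ subspace $U\subset A_d$ to $V=(U:l)\subset A_{d-1}$ with $\codim U^2\le\codim V^2\le m(n,d-1,k)$. The only cosmetic difference is that the paper starts from a single $U$ realizing $m(n,d,k)$, whereas you take an arbitrary $U$ and pass to the supremum at the end; your closing remark on why the descent stops at degree $k$ is accurate and worth keeping.
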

\begin{proof}
Let $U\subset A_d$ be a subspace of codimension $k$ such that $\codim U^2=m(n,d,k)$ and $k<d$. By \cref{thm:deg_reduction} we have $\codim U^2\le \codim V^2$ with $V=(U:l)$ for a generic linear form $l\in A_1$. By definition $\codim V^2\le m(n,d-1,k)$, hence we are done by induction.
\end{proof}

\begin{rem}
\begin{enumerate}[leftmargin=0.6cm]
\item[]
\item The reason we pass to initial ideals in the second part is because we need to show that
\[
\id{UV,l}_{2d-1}=A_{2d-1}.
\]
As we have seen $\id{U,l}_d=A_d$ and if we take another generic linear form $l'$ we also have $\id{V,l'}_{d-1}=A_{d-1}$. However since $V=(U:l)$ we do not know that $l$ behaves generically for $V$.
\item It is not true in general that $(U:l)$ is \bpf\ if $U$ is. Let $n=3$ and let $U=\spn(x^2y, x^2z, xy^2)^\perp\subset A_3$. Then $U$ contains $z\spn(z,y)A_1$ and thus for a generic linear form $l\in A_1$ we have $(U:l)=z\spn(y,z)\oplus\spn(p)$ for some $p\in A_2$. Hence the space $(U:l)$ has a \bp, namely $\sz(z,p)$.

One can show however that $(U:l)$ is \bpf\ whenever the degree is large enough.
\end{enumerate}
\end{rem}

\begin{example}
In fact, in all cases we know of $m(n,d,k)=m(n,k,k)$ for any $k\le d$. 

With SAGE one easily checks that $m(3,5,5)=16$ and $m(3,9,9)=31$. To show that $m(3,d,5)=m(3,5,5)$ and $m(3,d,9)=m(3,9,9)$ for $k\le d$ it is enough by \cref{cor:ss_bound_independent_of_d} to find some subspace in degree $d$ that realizes the bound $m(3,k,k)$. 

For $k=5$, let
\[
W=\spn(x_1^d,x_1^{d-1}x_2,x_1^{d-1}x_3,x_1^{d-2}x_2^2,x_1^{d-2}x_2x_3)
\]
and $U:=W^\perp$, then $\codim U^2=m(3,k,k)$. From the last corollary it therefore follows that $m(3,d,k)=m(3,k,k)$.

For $k=9$ we cannot take the nine smallest monomials (wrt the lex-ordering) to realize the maximum. For $d\ge 9$ let $s=d-9$ and let $U$ be the following subspace
\[
U^\perp=x_1^s\spn(x_1^{9}, x_1^{8}x_2, x_1^{8}x_3, x_1^{7}x_2^2, x_1^{7}x_2x_3, x_1^{7}x_3^2, x_1^{6}x_2^3, x_1^{6}x_2^2x_3, x_1^{5}x_2^4).
\]
Then it also follows that $m(3,d,9)=31$ for any $d\ge 9$.

This shows however that it is not clear in general which subspace realizes the maximum, even in the case $n=3$. We can for example not take lex-segment ideals which realize the bound in Macaulay's \cref{thm:macaulay} (i).
\end{example}

\section{Lifting subspaces}
\label{sec:Lifting subspaces}

In \cref{thm:reduction_number_of_variables} we showed how to reduce the number of variables, now we also want to increase that number while preserving $\codim U^2$.

\begin{dfn}
Let $U\subset A_d$ be a subspace of codimension $k$. Define
\[
U^{(1)}:=x_{n+1}A(n+1)_{d-1}\oplus U\subset A(n+1)_d\quad \text{(a subspace of codimension k)}
\]
and for any $l\ge 2$
\[
U^{(l)}:=(U^{(l-1)})^{(1)}\subset A(n+l)_d
\]
($U^{(0)}:=U$).  
\end{dfn}

\begin{prop}
\label{prop:liftingfaces}
Let $U\subset A_d$ be a subspace of codimension $k$. Let $H=(h_i)_{i\ge 0}$ be the Hilbert function of $\id{U}$. Then for every $l \ge 0$ the following hold:
\begin{enumerate}
\item The Hilbert function $K=(k_i)_{i\ge 0}$ of the ideal generated by $U^{(l)}$ in $A(n+l)$ satisfies
\begin{itemize}
\item $k_i=\dim A(n+l)_i$ for $0 \le i \le d-1$,
\item $k_i=h_i$ for $i\ge d$.
\end{itemize}
\item $\codim_{A(n+l)_{2d}} (U^{(l)})^2 = \codim_{A_{2d}} U^2 + l\cdot h_{2d-1}$.
\end{enumerate}
\end{prop}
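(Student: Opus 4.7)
The plan is to prove both statements by induction on $l$, with the base case $l=1$ doing most of the work. Throughout I use the direct-sum decomposition
\[
A(n+1)_i = x_{n+1} A(n+1)_{i-1} \oplus A_i,
\]
which splits monomials according to whether $x_{n+1}$ divides them.

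For (i) at $l=1$: since $\id{U^{(1)}}$ is generated in degree $d$, it vanishes in degrees below $d$, giving $k_i = \dim A(n+1)_i$ for $i < d$. For $i \ge d$, using $U^{(1)} = x_{n+1} A(n+1)_{d-1} \oplus U$, every element of $A(n+1)_{i-d} \cdot U^{(1)}$ is either a multiple of $x_{n+1}$ or lies in $A_{i-d} \cdot U$, so
\[
\id{U^{(1)}}_i = x_{n+1} A(n+1)_{i-1} + A_{i-d} \cdot U.
\]
Quotienting by the first summand of the ambient decomposition gives $A(n+1)_i/\id{U^{(1)}}_i \cong A_i/\id{U}_i$, hence $k_i = h_i$.

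For (ii) at $l=1$: I would expand
\[
(U^{(1)})^2 = x_{n+1}^2 \bigl(A(n+1)_{d-1} \cdot A(n+1)_{d-1}\bigr) + x_{n+1} A(n+1)_{d-1} \cdot U + U^2.
\]
Since $d \ge 2$, products of two degree-$(d-1)$ forms in $n+1$ variables already span $A(n+1)_{2d-2}$, so the first summand is all of $x_{n+1}^2 A(n+1)_{2d-2}$. Splitting the middle factor as $A(n+1)_{d-1} = x_{n+1} A(n+1)_{d-2} \oplus A_{d-1}$ shows that its $x_{n+1}^2$-part is absorbed by the first summand, leaving
\[
(U^{(1)})^2 = x_{n+1}^2 A(n+1)_{2d-2} \oplus x_{n+1}(A_{d-1} \cdot U) \oplus U^2
\]
sitting inside $A(n+1)_{2d} = x_{n+1}^2 A(n+1)_{2d-2} \oplus x_{n+1} A_{2d-1} \oplus A_{2d}$. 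Counting codimensions summand by summand: the first piece contributes $0$, the second contributes $\dim A_{2d-1} - \dim A_{d-1} U = h_{2d-1}$, and the third contributes $\codim_{A_{2d}} U^2$. This is (ii) for $l=1$.

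For the inductive step, I would apply the base case with $U^{(l-1)} \subset A(n+l-1)_d$ in place of $U$. By the inductive hypothesis for (i), the Hilbert function of $\id{U^{(l-1)}}$ at degree $2d-1$ equals $h_{2d-1}$, so the base case gives $\codim (U^{(l)})^2 = \codim (U^{(l-1)})^2 + h_{2d-1}$, which telescopes to the formula in (ii); part (i) for general $l$ follows similarly from the base case and the hypothesis. The main bookkeeping obstacle is verifying that the three pieces of $(U^{(1)})^2$ genuinely sit in distinct summands of $A(n+1)_{2d}$ so that codimensions add, which is exactly what the filtration by powers of $x_{n+1}$ achieves; once this is set up, both statements fall out cleanly.
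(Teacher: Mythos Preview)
Your proof is correct and follows essentially the same approach as the paper: both reduce to the case $l=1$ by induction, and both compute $(U^{(1)})^2$ via the decomposition of $A(n+1)_{2d}$ by powers of the new variable, obtaining $x_{n+1}^2 A(n+1)_{2d-2}\oplus x_{n+1}(A_{d-1}U)\oplus U^2$ and reading off the codimensions summand by summand. The only cosmetic difference is that the paper writes out the full decomposition $\bigoplus_i y^i A_{2d-i}$ at once, whereas you filter in two or three layers; the content is identical.
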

\begin{proof}
It is enough to show this for $l=1$ since the rest follows by induction. Write $A'=A[y]$ with a new indeterminate $y$, then
\[
V:=U^{(1)}=yA'_{d-1}+U\subset A'_d.
\]
For any $s\ge 0$ we have
\begin{align*}
VA_{s}'&=yA_{d-1}'A_{s}'+UA_{s}'=\left(\bigoplus_{i=1}^{d+s} y^iA_{d+s-i}\right)+A_{s}U+yA_{s-1}U+\dots+y^sU\\
&=\bigoplus_{i=1}^{d+s} y^iA_{d+s-i} \oplus UA_{s}
\end{align*}
which shows (i) since $A'_{d+s}=\bigoplus_{i=0}^{d+s} y^iA_{d+s-i}$.

For (ii) we calculate $V^2$ and with the same argument we get
\[
V^2=y^2A_{2d-2}'+yA'_{d-1}U+U^2=\bigoplus_{i=2}^{2d} y^iA_{2d-i} \oplus y(A_{d-1}U) \oplus U^2
\]
and 
\[
\codim_{A'_{2d}} V^2 = \codim_{A_{2d}} U^2 + h_{2d-1}.
\]
\end{proof}

This enables us to determine the Hilbert function of codimension $2$ subspaces of $A_2$.

For generic $U$ the Hilbert function of $\id{U}$ will be as small as possible. In the codimension 2 case this means that the Hilbert function is $(1,n,2)$ generically. We will show that this holds whenever $U$ is \bpf.

\begin{prop}
\label{prop:quadratic_hf}
Let $U\subset A_2$ be a \bpf\ subspace of codimension 2. Then the Hilbert function of $\id{U}$ is $(1,n,2)$.
\end{prop}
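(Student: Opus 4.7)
The plan is to reduce everything to showing $h_3 := h_{\id{U}}(3) = 0$. The values $h_0 = 1$, $h_1 = n$ are trivial since $U \subset A_2$, and $h_2 = 2$ is the codimension hypothesis. By \cref{cor:degree2d-1} applied with $d = k = 2$ we already have $h_3 \le 1$; and once $h_3 = 0$ is established, the identity $\id{U}_3 = A_3$ propagates via $\id{U}_t = A_{t-3}\cdot\id{U}_3 = A_t$ for every $t \ge 3$, giving the Hilbert function $(1, n, 2, 0, 0, \dots)$ as claimed.

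Set $W := U^\perp$, a $2$-dimensional subspace of $A_2$. The key step is to rewrite the orthogonal $(A_1 U)^\perp \subset A_3$ under the apolarity pairing. A direct computation using the commutator identity $g(\partial)\,x_i = x_i\,g(\partial) + (\partial_i g)(\partial)$ together with the vanishing $g(\partial)u = 0$ for $g \in A_3, u \in A_2$ gives
\[
\bil{g}{x_i u} = \tfrac{1}{3}\bil{\partial_i g}{u},
\]
and hence $(A_1 U)^\perp = \{\, g \in A_3 : \partial_i g \in W \text{ for every } i\,\}$. Assuming for contradiction that $h_3 = 1$, I fix a nonzero $g_0$ in this orthogonal complement. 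My aim is to exhibit a square $l^2 \in W$ with $l \in A_1$; since $\sz(U) \neq \emptyset$ is equivalent by apolarity to $W$ containing such a square, this contradicts $U$ being \bpf.

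By construction $\spn(\partial_1 g_0, \dots, \partial_n g_0) \subset W$ has dimension at most $2$. Rank--nullity for the map $A_1 \to A_2$, $l \mapsto l(\partial)g_0 = \sum l_i\,\partial_i g_0$, then forces its kernel to have codimension at most $2$ in $A_1$, so after a linear change of coordinates I may assume $\partial_{x_i} g_0 = 0$ for $i \ge 3$ and hence $g_0 \in \C[y_1, y_2]_3$. It then suffices to show that the span of the two partial derivatives of any nonzero binary cubic contains a square of a linear form. Viewing $\C[y_1, y_2]_2$ as $\C^3$, the squares of linear forms sweep out a smooth conic in $\P(\C[y_1, y_2]_2) \cong \P^2$: if $\spn(\partial_{y_1}g_0, \partial_{y_2}g_0)$ is $2$-dimensional, the corresponding line in $\P^2$ meets this conic; if it is $1$-dimensional, solving $\partial_{y_1}g_0 - c\,\partial_{y_2}g_0 = 0$ forces $g_0$ to be a cube of a linear form, whose partials are themselves rank-one quadrics. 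Either way $W$ contains $l^2$ for some $l \in A_1$, which is the desired contradiction. The only delicate point is the rigidity reduction $g_0 \in \C[y_1, y_2]_3$; once this is in place, the planar Veronese intersection argument is immediate.
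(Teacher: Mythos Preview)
Your proof is correct and takes a genuinely different route from the paper's. The paper argues as follows: Macaulay gives $h_3\le 2$; if $h_3\ge 1$, then the lift $U^{(l)}\subset A(n+l)_2$ (still \bpf\ and of codimension~$2$) satisfies $\codim (U^{(l)})^2=\codim U^2+l\cdot h_3\ge l$ by \cref{prop:liftingfaces}, and for $l\ge 7$ this exceeds the universal bound $\codim U^2\le 6$ from \cref{thm:codimension2bounds}, a contradiction. In contrast, you argue directly via apolarity: a nonzero $g_0\in (A_1U)^\perp$ has all partials in $W=U^\perp$, which forces $g_0$ to be essentially a binary cubic; the span of the partials of a binary cubic always meets the Veronese conic, producing a square $l^2\in W$ and hence a base point of $U$. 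Your argument is self-contained and elementary, avoiding both the lifting construction and the bound of \cref{thm:codimension2bounds}; the paper's argument is shorter but relies on that earlier bound and illustrates how the lifting machinery feeds back into structural statements about Hilbert functions.
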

\begin{proof}
By \cref{thm:macaulay} the Hilbert function is smaller or equal to $(1,n,2,2,\dots)$. So assume $h_{\id{U}}(3)>0$. Then by \cref{prop:liftingfaces} the subspace $U^{(l)}\subset A(n+l)_2$ has codimension 2 and for $l \ge 7$ we have $\codim_{A(n+l)_4} (U^{(l)})^2\ge 7$ which is not possible by \cref{thm:codimension2bounds}.
\end{proof}

\begin{prop}
\label{prop:lift_codim_1_2}
Let $U\subset A_d$ be a \bpf\ subspace of codimension $k\in\{1,2\}$ and $\codim U^2=s$, then for every $N\ge n$ there exists a \bpf\ subspace $V\subset A(N)_d$ of codimension $k$ such that $\codim V^2=s$.
\end{prop}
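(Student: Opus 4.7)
The natural candidate is simply $V := U^{(N-n)}$, the iterated lift defined in \cref{sec:Lifting subspaces}. The plan is to verify three things for this $V$: it has codimension $k$ in $A(N)_d$, it is \bpf, and $\codim V^2 = s$. The first is immediate from \cref{prop:liftingfaces}(i). For the codimension of $V^2$, \cref{prop:liftingfaces}(ii) gives
\[
\codim_{A(N)_{2d}} V^2 \;=\; s \;+\; (N-n)\cdot h_{\id{U}}(2d-1),
\]
so the whole game reduces to showing that $h_{\id{U}}(2d-1)=0$.

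For this vanishing I would split on cases. If $k=1$ then $k< d$ (since $d\ge 2$) and \cref{cor:degree2d-1} gives $h_{\id{U}}(2d-1)=0$ directly. If $k=2$ and $d\ge 3$ then again $k<d$, and the same corollary applies. The only remaining case is $k=2$, $d=2$; here I would invoke \cref{prop:quadratic_hf}, which states that the Hilbert function of $\id{U}$ is $(1,n,2)$, and in particular $h_{\id{U}}(3)=0$. (Note that \cref{prop:quadratic_hf} does not cite the present statement, so this is safe.) Thus in every case the correction term vanishes and $\codim V^2=s$.

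It remains to check the \bpf\ condition, which is the one point requiring genuine verification. It suffices to treat $l=1$, the general case following by induction on $l$. Write $y=x_{n+1}$, so $U^{(1)}=yA(n+1)_{d-1}\oplus U$. For any $\xi=[a_1:\cdots:a_n:a_{n+1}]\in\P^n$, if every element of $U^{(1)}$ vanishes at $\xi$, then in particular $y\cdot y^{d-1}=y^d$ vanishes at $\xi$, forcing $a_{n+1}=0$; and then every element of $U$ vanishes at $(a_1,\dots,a_n)$, contradicting $\sz(U)=\emptyset$. So $U^{(1)}$ is \bpf. Induction on $l$ then shows $U^{(l)}$ is \bpf\ for all $l\ge 0$, completing the argument.

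I do not expect any serious obstacle: the content is essentially packaged already by \cref{prop:liftingfaces}, \cref{cor:degree2d-1}, and \cref{prop:quadratic_hf}. The only mildly delicate point is the \bpf\ verification for the lift, and that reduces immediately to the observation that $y^d\in U^{(1)}$.
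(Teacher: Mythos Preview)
Your proposal is correct and follows essentially the same route as the paper: take $V=U^{(N-n)}$, use \cref{prop:liftingfaces}(ii) to reduce to $h_{\id{U}}(2d-1)=0$, and handle that via \cref{cor:degree2d-1} and \cref{prop:quadratic_hf}. Your version is in fact more complete than the paper's, which silently skips the verification that $U^{(l)}$ is \bpf; your observation that $y^d\in U^{(1)}$ forces any common zero to have $y$-coordinate zero is exactly what is needed there.
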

\begin{proof}
By \cref{prop:quadratic_hf} and \cref{cor:degree2d-1} the degree $2d-1$ component of $A/\id{U}$ has dimension $0$. Hence 
\[
\codim_{A(n)_{2d}} U^2=\codim_{A(n+l)_{2d}} (U^{(l)})^2.
\]
by \cref{prop:liftingfaces} (ii).
\end{proof}

\section{Arbitrary codimension}
\label{sec:arbitrary codimension}

We show bounds for $\codim U^2$ for \bpf\ subspaces $U$ of any codimension that are independent of $n$ and $d$ if $d$ is large enough.
The most important step is to also consider the orthogonal complement alongside our starting space, this is made precise in \cref{lem:dual}.

The main idea is the following: If $U\subset A_d$ is a \bpf\ subspace of codimension $k$, then consider $U':=U\cap A(m)_d$ for some $2\le m \le n$. To use \cref{thm:reduction_number_of_variables} we need to make sure that $\codim U=\codim U'$ and to get bounds that are independent of $n$ we want $U'$ to be \bpf\ as well (and $k\le d-1$), then $\codim U^2\le \codim (U')^2$.

We still always assume that $n,\,d\ge 2$ and $k\in\N$. 

\begin{rem}[The dual problem]
\label{rem:dual}
Let $U\subset A_d$ be a \bpf\ subspace of codimension $k$ and apply a generic \cc\ to $U$. Instead of asking if $U':= U\cap A(m)_d$ satisfies 
\begin{enumerate}
\item $\codim_{A(m)_d} U'=k$ and
\item $\sz(U')=\emptyset$ with $\sz(U')\subset\P^{m-1}$,
\end{enumerate}
as in \cref{thm:reduction_number_of_variables} and \cref{cor:reduction_number_of_variables}, we can also look at the dual problem:

Let $W=U^\perp$. Does $W':=W(x_1,\dots,x_m,0,\dots,0)$ have the same dimension as $W$ and does $W'$ not intersect the Veronese of $\P^{m-1}$?
\end{rem}

\begin{lem}
\label{lem:dual}
Let $U\subset A_d$ be a subspace and $W:=U^\perp$. Let $l_1,\dots,l_s\in A_1$ be linearly independent linear forms and $V:=\C[l_1,\dots,l_s]_d\subset A_d$. Then 
\[
(U\cap V)^\perp\cong(W+V^\perp)/V^\perp,
\]
and $V^\perp=\C[\lambda_1,\dots,\lambda_{n-s}]_d$ where $\spn(l_1,\dots,l_s)^\perp=\spn(\lambda_1,\dots,\lambda_{n-s})$.

Write $\ol W$ for $(W+V^\perp)/V^\perp$, then we especially have $\codim U\cap V=\dim \ol W$ and $U\cap V$ is \bpf\ if and only if $\ol W$ contains no $d$-th power of a linear form.
\end{lem}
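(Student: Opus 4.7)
My plan is to reduce the whole lemma to the basic orthogonality identity
\[
(U \cap V)^\perp = W + V^\perp
\]
in $A_d$, which holds for any two subspaces under the non-degenerate apolarity pairing. The inclusion $\supset$ is clear, and equality follows by a dimension count using $(U + V)^\perp = W \cap V^\perp$: one has
\[
\dim(U \cap V)^\perp = \dim A_d - \dim U - \dim V + \dim(U+V) = \dim W + \dim V^\perp - \dim(W \cap V^\perp) = \dim(W + V^\perp).
\]
Reducing modulo $V^\perp$ produces the claimed isomorphism $(U \cap V)^\perp/V^\perp \cong \ol W$, and the codimension identity $\codim(U \cap V) = \dim \ol W$ is immediate.

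For the description of $V^\perp$ I would pass to coordinates in which $l_i = x_i$ (so that $\lambda_j = x_{s+j}$). In these coordinates the apolarity pairing becomes, up to scalar, the monomial pairing $\bil{x^\alpha}{x^\beta} \propto \delta_{\alpha\beta}$. Hence $V = \C[x_1,\ldots,x_s]_d$ is the span of the monomials supported on the first $s$ variables, and every monomial using some $x_j$ with $j > s$ lies in $V^\perp$; in particular $\C[\lambda_1,\ldots,\lambda_{n-s}]_d \subset V^\perp$.

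For the base-point characterisation, identify $V \cong A(s)_d$ via $l_i \leftrightarrow y_i$; since $\partial_j$ kills $V$ for $j > s$, this identification respects the apolarity pairings. Then $\bil{\ell^d}{p} = p(\ell)$ on $V$ for $\ell \in \spn(l_1,\ldots,l_s)$ gives that $U \cap V$ is base-point free (in $\P^{s-1}$, i.e.\ as a subspace of $A(s)_d$) iff no nonzero $\ell^d$ of this form lies in $(U \cap V)^\perp = W + V^\perp$, equivalently iff $\ol W$ contains no class $\ell^d + V^\perp$. To replace $\ell \in \spn(l_i)$ by an arbitrary linear form $\ell \in A_1$, decompose $\ell = \ell_1 + \ell_2$ with $\ell_1 \in \spn(l_i)$ and $\ell_2 \in \spn(\lambda_j)$ and expand $(\ell_1 + \ell_2)^d$ binomially: each term $\binom{d}{k}\ell_1^k\ell_2^{d-k}$ with $k < d$ contains a factor $\ell_2$ and hence, by the monomial description of $V^\perp$, lies in $V^\perp$. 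Thus $\ell^d \equiv \ell_1^d \pmod{V^\perp}$, and the existence of any $d$-th power of a linear form in $\ol W$ coincides with a base point of $U \cap V$.

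The main subtlety is this last reduction modulo $V^\perp$, which is exactly what turns the a priori restricted condition "$\ell \in \spn(l_i)$" into the intrinsic "$\ell \in A_1$"; without it, the base-point condition could not be phrased purely in terms of $\ol W$.
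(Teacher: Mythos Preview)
Your argument is correct. The paper states \cref{lem:dual} without proof, so there is no approach to compare against; your linear-algebra reduction via the identity $(U\cap V)^\perp = W + V^\perp$ in $A_d$, followed by the quotient by $V^\perp$ and the evaluation formula $\bil{\ell^d}{p}=p(\ell)$, is exactly the intended elementary verification.

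One clarification worth recording: the clause ``$V^\perp=\C[\lambda_1,\dots,\lambda_{n-s}]_d$'' in the lemma is a slip of notation---for $d\ge 2$ the right-hand side is strictly smaller than $V^\perp$ by a dimension count. What is meant (and what your monomial description actually establishes) is $V^\perp=\id{\lambda_1,\dots,\lambda_{n-s}}_d$, the degree-$d$ part of the ideal generated by the $\lambda_j$; this is also how the lemma is applied downstream (e.g.\ in \cref{cor:dim_intersection}, where $\ol W$ is viewed inside $A/\id{l_{k+1},\dots,l_n}$). You were right to claim only the inclusion you needed, and your binomial reduction $\ell^d\equiv\ell_1^d\pmod{V^\perp}$---which you flag as the main subtlety---relies precisely on this ideal description, since each cross term $\ell_1^k\ell_2^{d-k}$ with $k<d$ lies in $\ell_2 A_{d-1}\subset\id{\lambda_1,\dots,\lambda_{n-s}}_d$.
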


Now we want to work on condition (i) in \cref{rem:dual} to ensure that $\dim W=\dim \ol W$.

Since $W$ will play the role of $U^\perp$, $k$ will usually denote the dimension of $W$, and not the codimension.

We start with a theorem that shows the geometric consequences of extremal behavior in Green's Theorem. This is used in the Proposition afterwards to determine the exact form of our subspaces.

\begin{thm}[{\cite[Theorem 3.2]{ams2018}}]
\label{thm:green_extremal}
Let $W\subset A_d$ be a subspace of dimension $k$ and suppose that for some $m,c\in\N$ we have
\[
\dim A_d-k=h_d=\binom{d+c}{d}+\binom{d+c-1}{d-1}+\dots+\binom{d+c-m}{d-m}.
\]
If $c_d=(h_d)_{(d)}|_0^{-1}$ then $W=\id{L_1,\dots,L_{n-c-2}}_d+FA_{d-m-1}$ where $L_1,\dots,L_{n-3}\in A_1$ are linearly independent linear forms and $F\in A_{m+1}$.
\end{thm}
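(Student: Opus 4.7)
I would argue by induction on the number of variables $n$ (for fixed $c,m,d$), with base case $n=c+2$ where no $L_i$'s appear. The first observation is that the hypothesis $\dim W = \dim A_d - h_d$ with $h_d=\dim(A/\langle W\rangle)_d$ forces $W=I_d$ where $I:=\langle W\rangle$, so the argument can be carried out with $I$.

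\textbf{Inductive step.} After a generic change of coordinates set $\ell=x_n$ and apply \cref{thm:green_thm} in the equality case. The exact sequence in \ref{lab:setup_green} combined with $c_d=(h_d)_{(d)}|_0^{-1}$ gives
\[
\dim(A/\langle I,\ell\rangle)_d=\sum_{j=0}^m\binom{d+c-j-1}{d-j}.
\]
Read inside $\bar A:=A/(\ell)$ (now with $n-1$ variables), this is the Macaulay representation with parameters $(n-1,d,c-1,m)$, so the image $\bar I:=\langle I,\ell\rangle/(\ell)$ satisfies the hypothesis of the theorem with $c$ decreased by one. By the inductive hypothesis,
\[
\bar I_d = \langle\bar L_1,\dots,\bar L_{n-c-2}\rangle_d+\bar F\cdot\bar A_{d-m-1}.
\]
Lift each $\bar L_i$ to $L_i\in A_1$ and $\bar F$ to $F\in A_{m+1}$. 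The kernel of the surjection $I_d\to\bar I_d$ equals $\ell\cdot(I:\ell)_{d-1}$, and a parallel computation using the same exact sequence shows that $(I:\ell)$ also lies in the equality case of Green's theorem, now in degree $d-1$; this lets one absorb $\ell$ into the list of $L_i$'s and obtain $W=\langle L_1,\dots,L_{n-c-2}\rangle_d+FA_{d-m-1}$.

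\textbf{Base case ($n=c+2$).} Here the claim reduces to showing $I_d=F\cdot A_{d-m-1}$ for some $F\in A_{m+1}$. Writing $\dim A_d=\sum_{j=0}^d\binom{d+c-j}{d-j}$, the prescribed $h_d$ leaves $\dim I_d=\binom{d+c-m}{d-m-1}$; if some non-zero $g\in I_i$ existed for $i\le m$, then $gA_{d-i}\subseteq I_d$ would have dimension exceeding this bound (since $A$ is a domain), a contradiction, so $I_{\le m}=0$. Macaulay's growth bound (\cref{thm:macaulay}) together with the target $h_d$ then forces $\dim I_{m+1}=1$; pick $F$ spanning it. \cref{thm:gotzmannpersistence} applied from degree $m+1$ shows that the Hilbert function of $\langle F\rangle$ matches $H$ in every subsequent degree, so from $FA_{d-m-1}\subseteq I_d$ and equal dimensions we conclude equality.

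\textbf{Main obstacle.} The delicate point is verifying that the equality case of Green's theorem genuinely propagates under both operations $I\mapsto\langle I,\ell\rangle/(\ell)$ (which shifts $c\to c-1$ at fixed $d$) and $I\mapsto(I:\ell)$ (which shifts $d\to d-1$ at fixed $c$). This requires careful bookkeeping with the bracket operator $(\cdot)_{(d)}|^s_t$ on Macaulay representations, and one must argue that a sufficiently generic $\ell$ realizes the equality at every level of the recursion; generically Green's bound is strict, so the extremal hypothesis is what pins $\ell$ into a specific geometric position relative to $\sz(I)$, and this is precisely what enables the successive extraction of the linear forms $L_i$.
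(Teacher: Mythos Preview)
The paper does not supply its own proof of this theorem; it is quoted verbatim from \cite[Theorem~3.2]{ams2018} and used as a black box in the subsequent proposition. There is therefore no ``paper's proof'' to compare your attempt against.

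That said, your sketch has a genuine gap in the base case. You set $I=\langle W\rangle$, an ideal generated in degree $d$, and then reason about $I_{\le m}$ and $I_{m+1}$ for $m+1<d$. But $I_i=0$ automatically for every $i<d$, so the step ``$\dim I_{m+1}=1$; pick $F$ spanning it'' is vacuous and cannot produce a nonzero $F\in A_{m+1}$. What the statement actually asserts is that $W$ coincides with the degree-$d$ piece of the \emph{principal} ideal $\langle F\rangle$, i.e.\ $W=F\cdot A_{d-m-1}$; the form $F$ does not live in $\langle W\rangle$ at all (it lives in its saturation, or equivalently in an ideal one must construct from the extremal hypothesis). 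Producing $F$ in the base case therefore requires a different mechanism---in the cited source this comes from analysing precisely when Green's bound is attained, which forces the generic initial ideal to be a specific lex/strongly-stable configuration, not from a dimension count on the graded pieces of $\langle W\rangle$.

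The inductive step is also thinner than it looks. You need the \emph{same} generic $\ell$ to put both $\langle I,\ell\rangle/(\ell)$ and $(I:\ell)$ into the equality case simultaneously, and then to show that the lifted $F$ from the quotient and the $F$ implicit in $(I:\ell)$ can be chosen compatibly; otherwise the two pieces you glue need not assemble to $\langle L_1,\dots,L_{n-c-2}\rangle_d+FA_{d-m-1}$ with a \emph{single} $F$. Your ``Main obstacle'' paragraph correctly identifies this as the crux, but does not resolve it.
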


This means that in this case $W$ is the saturated degree $d$ component of a hypersurface of degree $m+1$ in a $c+2$ dimensional subspace.

\begin{prop}
\label{prop:green_application_dimW}
Let $k\le n$ and let $W\subset A_d$ be a subspace of dimension $k$. Then for generic $l\in A_1$ either
\begin{enumerate}
\item $\dim \ol W=\dim W$, or
\item $\dim \ol W=\dim W - 1$ and $W=FA_1$ for some $F\in A_{d-1}$,
\end{enumerate}
where $\ol W=W+\id{l}_d/\id{l}_d$. Especially whenever $k<n$ it follows that $\dim \ol W=\dim W$.
\end{prop}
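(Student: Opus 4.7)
The plan is to apply Green's Hyperplane Restriction Theorem \cref{thm:green_thm} and, in the boundary case, its extremal version \cref{thm:green_extremal}. First, observe that for any non-zero $l\in A_1$ we have
\[
\dim\ol W = \dim W - \dim(W\cap lA_{d-1}),
\]
so it suffices to bound $\dim(W\cap lA_{d-1})$ from above for generic $l$. Setting $I:=\id{W}$, in the notation of \ref{lab:setup_green} one has $h_d = \dim A_d - k$ and $c_d = \dim A_d - \dim(W + lA_{d-1})$, and a straightforward count using $\dim A_d - \dim A_{d-1} = \binom{n-2+d}{d}$ gives
\[
\dim(W\cap lA_{d-1}) = k - \binom{n-2+d}{d} + c_d.
\]

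Next I would pin down the Macaulay representation of $h_d = \binom{n-1+d}{d} - k$ under the hypothesis $1\le k\le n$. Iterating Pascal's identity yields $\binom{n-1+d}{d} = \binom{n-2+d}{d}+\binom{n-3+d}{d-1}+\dots+\binom{n-1}{1}+1$, so subtracting $k$ produces the Macaulay representation
\[
h_d = \binom{n-2+d}{d}+\binom{n-3+d}{d-1}+\dots+\binom{n}{2}+\binom{n-k}{1},
\]
whose last summand vanishes exactly when $k=n$. Applying \cref{thm:green_thm} for generic $l$ gives
\[
c_d \le \binom{n-3+d}{d}+\binom{n-4+d}{d-1}+\dots+\binom{n-1}{2}+\binom{n-k-1}{1},
\]
and a hockey-stick telescoping (using $\binom{n-3+i}{i} = \binom{n-3+i}{n-3}$ together with $\sum_{j=r}^{N}\binom{j}{r}=\binom{N+1}{r+1}$, one obtains $\sum_{i=2}^{d}\binom{n-3+i}{i} = \binom{n-2+d}{d}-(n-1)$) simplifies the right-hand side to $\binom{n-2+d}{d}-k$ when $k<n$ and to $\binom{n-2+d}{d}-n+1$ when $k=n$.

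Substituting back, the formula for $\dim(W\cap lA_{d-1})$ yields $\le 0$ whenever $k<n$, forcing $\dim\ol W = \dim W$ and establishing the ``especially''-clause. When $k=n$ one obtains $\dim(W\cap lA_{d-1}) \le 1$, which is case (i) unless equality holds; in the equality case $c_d$ saturates Green's bound, so \cref{thm:green_extremal} applies with parameters $c = n-2$ and $m = d-2$, and its conclusion reads
\[
W = \id{L_1,\dots,L_{n-c-2}}_d + FA_{d-m-1} = FA_1
\]
with $F\in A_{d-1}$, since $n-c-2 = 0$ kills the linear-form part and $d-m-1=1$. The only delicate step is the Macaulay-representation bookkeeping and its telescoping; once that is in place, the dichotomy of the statement falls out of the two cited theorems.
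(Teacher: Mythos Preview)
Your proof is correct and follows essentially the same approach as the paper's: compute the $d$-th Macaulay representation of $h_d=\dim A_d-k$, apply Green's Theorem~\ref{thm:green_thm} to bound $c_d$, and invoke the extremal case Theorem~\ref{thm:green_extremal} with $c=n-2$, $m=d-2$ to get $W=FA_1$ when equality holds. The only cosmetic difference is that you phrase the conclusion via $\dim(W\cap lA_{d-1})$ while the paper writes $\dim\ol W=\dim A(n-1)_d-c_d$ directly; the telescoping identity you spell out is the same computation the paper summarizes as ``$C=\dim A(n-1)_d-(n-1)$''.
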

\begin{proof}
Using the notation of \cref{lab:setup_green} with $I=\id{W}$, we have 
\[
h_d=\dim A_d-k=\sum_{i=0}^{d-2} \underbrace{\binom{(n-2)+d-i}{d-i}}_{=\dim A(n-1)_{d-i}} + \binom{n-k}{1}.
\]
Green's theorem then tells us that 
\[
c_d\le \underbrace{\sum_{i=0}^{d-2} \binom{(n-2)+d-i-1}{d-i}}_{C:=} + \binom{n-k-1}{1}.
\]
As can be easily verified we have $C=\dim A(n-1)_d-(n-1)$. We then have
\[
c_d\le \dim A(n-1)_d-(n-1)+\binom{n-k-1}{1} = \dim A(n-1)_d - \begin{cases}k, \text{ if } k<n \\ k-1, \text{ if } k=n \end{cases}.
\]
Equivalently in the first case $\dim \ol W=k$ and in the second case $\dim \ol W\in\{k,k-1\}$. If $\dim \ol W=k-1$ then by \cref{thm:green_extremal} ($c=n-2,\, m=d-2$) it follows that $W=FA_1$ with $F\in A_{d-1}$.
\end{proof}

\begin{cor}
\label{cor:dim_intersection}
Let $U\subset A_d$ be a subspace of codimension $k$ with $k\le n$ then $\codim(U\cap \C[l_1,\dots,l_k]_d)=k$ for generic linear forms $l_1,\dots,l_k\in A_1$.

Especially after applying a generic \cc\ to $U$, we have $\codim (U\cap A(k)_d)=k$.
\end{cor}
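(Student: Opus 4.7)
The plan is to pivot to the perp side via \cref{lem:dual}, then chip $W:=U^\perp$ down one linear form at a time using \cref{prop:green_application_dimW}. Since $\dim W=k$, I complete the generic $l_1,\dots,l_k$ to a basis of $A_1$ by the forms $\lambda_1,\dots,\lambda_{n-k}$ produced by \cref{lem:dual}; that lemma then gives
\[
\codim\bigl(U\cap\C[l_1,\dots,l_k]_d\bigr)=\dim\ol W,\qquad \ol W=(W+V^\perp)/V^\perp,
\]
and identifies $A_d/V^\perp$ with $A(k)_d$, so that quotienting by $V^\perp$ amounts to killing the $\lambda_j$'s in degree $d$. The whole task thus reduces to proving $\dim\ol W=k$.

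I would then factor the one-step quotient into a chain of $n-k$ single-variable quotients
\[
A(n)_d\twoheadrightarrow A(n{-}1)_d\twoheadrightarrow\cdots\twoheadrightarrow A(k)_d,
\]
killing $\lambda_1,\dots,\lambda_{n-k}$ successively, and let $W^{(j)}\subset A(n-j)_d$ denote the image of $W$ after the $j$-th step, so $W^{(0)}=W$ and $W^{(n-k)}=\ol W$. At step $j$ I apply \cref{prop:green_application_dimW} to $W^{(j-1)}\subset A(n-j+1)_d$ and the linear form $\lambda_j$: for generic $\lambda_j$ either the dimension is preserved, or the exceptional case $W^{(j-1)}=FA_1$ holds, which forces $\dim W^{(j-1)}$ to equal the current number of variables $n-j+1$. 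Since $j\le n-k$ gives $\dim W^{(j-1)}\le k\le n-j<n-j+1$ throughout, the exceptional case never occurs and the dimension is preserved at every step. All the genericity conditions are Zariski-open in the tuple $(l_1,\dots,l_k)$, so a single generic choice realizes them simultaneously and yields $\dim\ol W=k$.

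For the last sentence of the corollary, a generic \cc\ on $U$ is the same as a generic choice of basis $l_1,\dots,l_n$ of $A_1$; keeping $l_1,\dots,l_k$ as the first $k$ of these coordinates makes $\C[l_1,\dots,l_k]_d=A(k)_d$, giving the stated reformulation.

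The step requiring the most care is the bookkeeping that matches the one-step quotient $A_d\to A_d/V^\perp$ from \cref{lem:dual} with the iterated quotient (so that $W^{(n-k)}$ really equals $\ol W$) and bundles the $n-k$ individual generic conditions into a single open condition on the tuple $(l_1,\dots,l_k)$. Neither is a substantive obstacle once \cref{prop:green_application_dimW} is granted; the only real input is the numerical inequality $k<n-j+1$ for all $j=1,\dots,n-k$, which is what lets us stay strictly inside the ``Especially'' clause throughout the iteration.
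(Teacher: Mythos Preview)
Your proposal is correct and takes essentially the same approach as the paper: pass to $W=U^\perp$ via \cref{lem:dual} and control $\dim\ol W$ using \cref{prop:green_application_dimW}. The paper's proof is terser, invoking the proposition once under the single hypothesis $k<n$ and leaving the step-by-step iteration (and the check $k<n-j+1$ at each stage) implicit; your version simply makes this chain of $n-k$ one-variable quotients explicit.
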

\begin{proof}
Let $W=U^\perp$. If $k=n$ there is nothing to show, hence assume $k<n$. By \cref{lem:dual} it is enough to consider the dimension of $\ol W\subset A/\id{l_{k+1},\dots,l_n}$ for any basis $l_{k+1},\dots,l_n$ of the orthogonal complement of $\spn(l_1,\dots,l_k)$. Since $k<n$ it follows from \cref{prop:green_application_dimW} that $\dim W=\dim \ol W$.
\end{proof}

\begin{cor}
\label{cor:green_dim(W:l)}
Let $W\subset A_d$ be a subspace of dimension $k<n$ and $l\in A_1$ a generic linear form, then $\dim (W:l)=0$.
\end{cor}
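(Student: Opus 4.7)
The plan is to deduce this corollary directly from \cref{prop:green_application_dimW} by a dimension count in the exact sequence of \ref{lab:setup_green}. The key observation is that $(W:l)$ is, essentially by definition, what controls the kernel of the multiplication-by-$l$ map on the quotient $A/\id{W}$ in degree $d-1 \to d$, while $\ol W$ controls the cokernel in degree $d$.

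First I would unwind the definitions: since $\id{W}$ is generated in degree $d$ we have $\id{W}_d=W$, and hence the definition of the ideal quotient gives
\[
(W:l)=\{q\in A_{d-1}\colon lq\in W\}.
\]
Geometrically, $l\cdot(W:l)=W\cap lA_{d-1}$, so multiplication by $l$ fits into the short exact sequence
\[
0\to A_{d-1}/(W:l)\xrightarrow{\,\cdot l\,} A_d/W\to A_d/(W+lA_{d-1})\to 0,
\]
which is exactly the degree $d$ piece of the sequence in \ref{lab:setup_green} with $I=\id{W}$. Taking dimensions and using $\dim A_d-\dim A_{d-1}=\dim A(n-1)_d$, this reads
\[
\dim A_d-k=\bigl(\dim A_{d-1}-\dim(W:l)\bigr)+c_d,
\]
so $c_d=\dim A(n-1)_d-k+\dim(W:l)$.

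Next I would relate this to $\ol W=(W+lA_{d-1})/lA_{d-1}\subset (A/\id{l})_d$: by the same dimension count, $\dim\ol W=k-\dim(W:l)$. This is the main identity. Now the hypothesis $k<n$ lets me invoke \cref{prop:green_application_dimW}, which says that for generic $l\in A_1$ we are in case (i), i.e.\ $\dim\ol W=\dim W=k$. Combined with $\dim\ol W=k-\dim(W:l)$, this forces $\dim(W:l)=0$, which is the claim.

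There is no real obstacle here: the whole content is packaged in \cref{prop:green_application_dimW}, and all that remains is the elementary bookkeeping above. The only mild subtlety is making sure the genericity of $l$ used in the corollary is the same as (or can be taken to coincide with) the one used in \cref{prop:green_application_dimW}, but since both conditions are open and non-empty (the latter by Green's theorem, the former by the density of the Zariski open set of $l$'s to which \cref{prop:green_application_dimW}(i) applies when $k<n$), a generic $l$ satisfies both simultaneously.
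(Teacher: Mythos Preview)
Your proof is correct and follows exactly the approach indicated in the paper, which simply states that the result is immediate from \cref{prop:green_application_dimW} and the exact sequence in \ref{lab:setup_green}. You have merely spelled out the dimension bookkeeping (the identity $\dim\ol W=k-\dim(W:l)$) that the paper leaves implicit.
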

\begin{proof}
This is immediate from \cref{prop:green_application_dimW} and the exact sequence in \cref{lab:setup_green}.
\end{proof}

This shows that condition (i) in \cref{rem:dual} is satisfied whenever $n>k$ and we go down by one variable. And it is in general not satisfied if $n\le k$ since we can take $W=FA_1$ for some $F\in A_{d-1}$.

Instead of using Green's Theorem we can also use generic initial ideals to show this.

\begin{prop}
\label{prop:gin_and_green}
Let $U\subset A_d$ be a subspace of codimension $k$ and $W:=U^\perp$. Let $V:=(\gin(U)_d)^\perp$. Then the number of monomials in $V$ divisible by $x_n$ is equal to $\dim (W:l)$ for generic $l\in A_1$.
\end{prop}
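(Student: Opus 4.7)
The plan is to convert $\dim(W:l)$ into a combinatorial count on $\gin(U)_d$ via an apolarity computation followed by a generic coordinate change. Write $k=\codim U$ and $V:=(\gin(U)_d)^\perp$, and recall that $V$ is spanned by the $k$ monomials of $A_d$ not lying in $\gin(U)_d$.

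First, I would exploit that the multiplication map $m_l\colon A_{d-1}\to A_d,\ q\mapsto lq$, and the contraction $\partial_l\colon A_d\to A_{d-1},\ u\mapsto l(\partial)u$, are adjoint up to a nonzero scalar under the apolarity pairing. This gives $(W:l)=(\partial_l U)^\perp$ and $\ker\partial_l=(lA_{d-1})^\perp$, so $\dim\ker\partial_l=\dim A(n-1)_d$ for every $l\ne 0$. Rank-nullity then yields
\[
\dim(W:l)=k-\dim A(n-1)_d+\dim\bigl(U\cap\ker\partial_l\bigr).
\]

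Next, for a nonzero $l\in A_1$ I would pick $g\in\gl_n(\C)$ whose $n$-th row records the coefficients of $l$. In the new coordinates $y=gx$ the form $l$ becomes $y_n$ and $\ker\partial_l$ becomes the ``$y_n$-free'' subspace, so $\dim(U\cap\ker\partial_l)=\dim(gU\cap A(n-1)_d)$. The map $\gl_n(\C)\to A_1\setminus\{0\}$ reading off the $n$-th row is dominant, so for generic $l$ the remaining rows can be chosen inside the Zariski-open set of \cref{prop:gin_existence}; then $\initial(gU)=\gin(U)_d$. Because the lex ordering puts $x_n$ at the top, the initial monomial of any $p\in A_d$ is $x_n$-free iff $p$ itself is $x_n$-free, which gives the equality $\initial(gU\cap A(n-1)_d)=\gin(U)_d\cap A(n-1)_d$. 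Since $\initial$ preserves dimension,
\[
\dim(U\cap\ker\partial_l)=\dim\bigl(\gin(U)_d\cap A(n-1)_d\bigr)=\#\{M\in\gin(U)_d:x_n\nmid M\}.
\]

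Finally, splitting the $\dim A(n-1)_d$ many $x_n$-free monomials of $A_d$ between $\gin(U)_d$ and $V$ gives $\#\{M\in V:x_n\nmid M\}=\dim A(n-1)_d-\#\{M\in\gin(U)_d:x_n\nmid M\}$. Since $V$ contains $k$ monomials in total,
\[
\#\{M\in V:x_n\mid M\}=k-\dim A(n-1)_d+\#\{M\in\gin(U)_d:x_n\nmid M\},
\]
which matches the expression for $\dim(W:l)$ obtained above. The only delicate point is the compatibility of ``$l$ generic in $A_1$'' and ``$g$ generic in $\gl_n(\C)$'' in the middle step, and this is taken care of by the dominance of $g\mapsto\sum_i g_{ni}x_i$ together with the density of the gin-locus.
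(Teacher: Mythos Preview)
Your apolarity setup and the final combinatorial count are both fine: the adjointness of $m_l$ and $\partial_l$ does give $\dim(W:l)=k-\dim A(n-1)_d+\dim(U\cap\ker\partial_l)$, and the identity $\initial(gU\cap A(n-1)_d)=\gin(U)_d\cap A(n-1)_d$ follows exactly as you say from $x_n$ being lex-largest.

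The gap is in your justification of $\dim(U\cap\ker\partial_l)=\dim(gU\cap A(n-1)_d)$. Under $y=gx$ with the $n$-th row of $g$ equal to the coefficient vector $a$ of $l$, the form $l$ does become $y_n$, but $\ker\partial_l$ does \emph{not} in general become the $y_n$-free subspace. Indeed $\ker\partial_l=\C[m_1,\dots,m_{n-1}]_d$ where the coefficient vectors of the $m_j$ are dot-orthogonal to $a$; this coincides with $\C[y_1,\dots,y_{n-1}]_d$ only when rows $1,\dots,n-1$ of $g$ themselves lie in $a^\perp$, a closed condition that a generic $g$ in the gin-locus will not satisfy. (Already for $n=2$, $l=x_1+x_2$ and $g$ with rows $(1,0)$, $(1,1)$ one has $\ker\partial_l=\spn(x_1-x_2)$ while the $y_2$-free line is $\spn(y_1)=\spn(x_1)$.) The apolarity pairing is not $\gl_n$-equivariant, so the contraction kernel cannot be transported through a non-orthogonal coordinate change in the way you claim.

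The paper avoids this by reversing the order of operations: it first applies a generic coordinate change so that one may assume $\initial(U)=\gin(U)$ and $l=x_n$, and only \emph{then} invokes duality in these coordinates, where $A(n-1)_d^\perp=x_nA_{d-1}$ holds on the nose; an explicit reduced basis of $U$ then shows $\codim_{A(n-1)_d}(U\cap A(n-1)_d)=k-s$, and the exact sequence of \cref{lab:setup_green} converts this into $\dim(W:x_n)=s$. This is legitimate because the generic value of $\dim(U^\perp:l)$ is unchanged when $U$ is replaced by $gU$. Your argument can be repaired the same way: move to generic coordinates first and then run your apolarity computation with $l=x_n$, rather than fixing the pairing before the coordinate change.
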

\begin{proof}
After a generic \cc\ we can assume that $\gin(U)=\initial(U)$, $V=\spn(T_1,\dots,T_k)$ for monomials $T_1,\dots,T_k$ and $l=x_n$. Let $s$ be the number of monomials in $V$ which are divisible by $x_n$ and assume that this is the case for $T_1,\dots,T_s$. Then $U$ has a basis given by
\[
\{M+\sum_{i=1}^k \lambda_M^i T_i\colon M\in \initial(U)_d \text{ a monomial},\ \lambda_M^i\in\C\}.
\]
We show that $U\cap A(n-1)_d$ has codimension $k-s$ in $A(n-1)_d$.

Let $M\in\initial(U)_d\cap A(n-1)_d$ be a monomial. Then $\lambda_M^i=0$ for $i\le s$ since $x_n>\dots>x_1$ and since we are working with the lex-ordering, hence $M+\sum_{i=1}^r \lambda_M^i T_i\in A(n-1)_d$. But the only monomials in $A(n-1)_d$ not contained in $\initial(U)_d$ are the ones in $V$ that are not divisible by $x_n$. These are $T_{s+1},\dots,T_k$. This shows that $\dim (U\cap A(n-1)_d)=\dim A(n-1)_d- (k-s)$.

But for $W$ this means that $\dim (W+\id{x_n}_d)/\id{x_n}_d = k-s$. Now the claim follows from the exact sequence in \cref{lab:setup_green}.
\end{proof}

\begin{rem}
Note that everything in \cref{prop:green_application_dimW} except for the form of $W$ in (ii) also instantly follows from \cref{prop:gin_and_green}: Since $\gin(U)$ is strongly stable, the maximal number of variables that can appear in $\gin(U)^\perp$ is achieved by $\gin(U)^\perp=x_1^{d-1}A(k)_1$. Hence if $n>k$, $x_n$ does not appear, if $n=k$ then either $x_n$ appears in exactly one monomial or it does not appear at all.
\end{rem}

Now we want to look at condition (ii) in \cref{rem:dual}. By \cref{lem:dual} asking whether $U':=U\cap A(n-1)$ is \bpf\ is the same as asking if the orthogonal complement contains no $d$-th power of a linear form. Thus assume that $U$ is \bpf\ and $W$ contains no $d$-th powers. Is it true that $\ol W\subset (A/\id{l})_d$ contains no $d$-th powers for generic $l\in A_1$ whenever $\dim W=\dim \ol W$? Sadly this is not true in general as the next example shows.

\begin{example}
Let $U:=(x_n^{d-1}A(n-1)_1)^\perp\subset A_d$ and let $l\in A_1$ be a generic linear form. After rescaling $l$ we can write $l=x_n+l'$ for some $l'\in A(n-1)_1$, hence $\ol W\cong (l')^{d-1}A(n-1)_1$. Then $(l')^d\in W$ and $\dim W=\dim \ol W$.
\end{example}

However it is true whenever the number of variables is large as the next theorem shows.

For convenience we use the following notation. For a subspace $W\subset A_d$ we say that $l\in A_1$ is $W$-generic if $l$ is generic in the sense of Green's \cref{thm:green_thm}.

\begin{prop}
\label{prop:stay_bpf}
Let $W\subset A_d$ be a subspace of dimension $k$ and $n\ge 3k+1$. If $W$ contains no $d$-th power of a linear form, then the same holds for $\ol W=W+\id{l}_d/\id{l}_d\subset (A/\id{l})_d$ where $l\in A_1$ is a generic linear form.
\end{prop}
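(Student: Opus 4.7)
The plan is to argue by contradiction, using an incidence variety dimension count. Suppose that for generic $l \in A_1$, the quotient $\overline W = (W + \langle l\rangle_d)/\langle l\rangle_d$ contains a $d$-th power of a nonzero linear form in $A_1/\C l$. Unwinding the definition, this says there exist $w \in W \setminus \{0\}$, $g \in A_{d-1}$, and $l' \in A_1$ with $[l'] \ne [l]$ in $\P(A_1)$ such that $w = (l')^d + lg$ (the case $[l'] = [l]$ would yield the zero class in $\overline W$, not a genuine $d$-th power). The strategy is then to package these data into an incidence variety and bound its dimension two ways to get a contradiction.

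First I would set up the incidence variety
\[
\mathcal{I} := \bigl\{([l],[l']) \in \P(A_1) \times \P(A_1) : [l] \ne [l'],\ (l')^d \in W + lA_{d-1}\bigr\}.
\]
Under the contrary hypothesis, the first projection $\pi_1 : \mathcal{I} \to \P(A_1)$ is dominant, so some irreducible component $\mathcal{I}_0$ satisfies $\dim \mathcal{I}_0 \ge n-1$. The goal is to bound $\dim \mathcal{I}_0$ from above by a quantity strictly less than $n-1$. To this end I would lift to the refined variety
\[
\widetilde{\mathcal{I}} := \bigl\{([l],[l'],[w]) \in \P(A_1) \times \P(A_1) \times \P(W) : [l] \ne [l'],\ (l')^d - w \in lA_{d-1}\bigr\}.
\]
For a generic $l$, the preceding \cref{prop:green_application_dimW} gives $W \cap lA_{d-1} = 0$ since $k < n$, so $\widetilde{\mathcal I} \to \mathcal I$ has single-point fibers on a dense open subset, yielding $\dim \widetilde{\mathcal{I}} \ge n-1$ as well.

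Now I would project $\widetilde{\mathcal{I}}$ to $\P(W)$: the image $Z$ consists of classes $[w]$ such that $w$ restricts to a nonzero $d$-th power on some hyperplane $H=\{l=0\}$ with $[l']\notin H$, and the fiber over $[w] \in Z$ is described by the pairs $([l],[l'])$ realizing this. For a generic $[w] \in Z$, the set of valid hyperplanes is finite, and the only continuous contribution to the fiber comes from the $\P^1$ of lifts of $[l'\bmod l]$ to $\P(A_1)$; since $\dim Z \le k - 1$ this would give $\dim \widetilde{\mathcal{I}} \le k$ on the generic stratum, which is strictly less than $n-1$ whenever $n \ge k+2$, contradicting the lower bound.

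The hard part will be controlling the special strata: for example $w = l_0^{d-1}l_1$ restricts to a $d$-th power on a positive-dimensional family of hyperplanes, and more generally one must stratify $Z = \bigsqcup_e Z^{(e)}$ by the dimension $e$ of the valid-hyperplane locus and show $\dim Z^{(e)} + e + 1 \le n-2$ for every $e$. The hypothesis $n \ge 3k+1$ is precisely what makes this stratification close: using that $\P(W)\cap V_d=\emptyset$, the locus $Z^{(e)}$ is forced to have codimension in $\P(W)$ growing with $e$ (since $e=n-1$ would force $w\in V_d$ and is excluded), and the numerical margin $n-3k-1\ge 0$ absorbs the worst strata. Combining the lower bound $\dim \widetilde{\mathcal I}\ge n-1$ with the stratified upper bound then yields the desired contradiction.
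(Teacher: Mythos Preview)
Your incidence-variety framework is reasonable, and the reduction to bounding $\max_e(\dim Z^{(e)}+e+1)$ is correct, but the argument stops precisely where the content lies. You assert that ``the locus $Z^{(e)}$ is forced to have codimension in $\P(W)$ growing with $e$'' and that ``the numerical margin $n-3k-1\ge 0$ absorbs the worst strata,'' yet neither claim is proved. In fact, under your contradiction hypothesis the picture is the opposite of what you suggest: from $\dim\widetilde{\mathcal I}\ge n-1$ and $\dim Z\le k-1$ it follows that some component $Z_0\subset Z$ has generic fibre of dimension at least $n-k$, so a generic $w_0\in Z_0$ restricts to a $d$-th power on a family of hyperplanes of dimension $e(w_0)\ge n-k-1\ge 2k$. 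The ``special strata'' are therefore not a side issue but the entire problem, and to close the count you would need a statement of the type ``if a form restricts to a $d$-th power on an $(n-k-1)$-dimensional family of hyperplanes then it is itself a $d$-th power.'' That is a nontrivial assertion about hyperplane sections which your sketch does not supply; the easy observation that $e=n-1$ forces $w\in V_d$ handles only the single top stratum and gives no control for intermediate $e$.

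The paper's proof is completely different and avoids incidence varieties. It fixes a \emph{maximal} system of $W$-generic $l_1,\dots,l_s$ with witnesses $L_i^d+l_ig_i\in W$ such that the $2s$ forms $l_1,\dots,l_s,L_1,\dots,L_s$ are linearly independent. If $s=k+1$, the $d$-th-power parts $L_1^d,\dots,L_{k+1}^d$ are visibly independent in suitable coordinates, contradicting $\dim W=k$. If $s\le k$, one takes $k+1$ further generic $h_1,\dots,h_{k+1}$ with $H_i^d+h_ip_i\in W$; maximality of $s$ forces each $H_i$ (after absorbing a multiple of $h_i$ into $p_i$) into the fixed span $\spn(l_1,\dots,l_s,L_1,\dots,L_s)$. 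A nontrivial relation $\sum\lambda_i(H_i^d+h_ip_i)=0$ then splits into two pieces because $n\ge 3k+1$ guarantees that the at most $2s+(k+1)\le 3k+1$ linear forms $l_i,L_i,h_j$ are independent; the resulting identity $\sum\lambda_ih_ip_i=0$ is dispatched via \cref{cor:green_dim(W:l)}. So the hypothesis $n\ge 3k+1$ enters only to make these linear forms independent, not as numerical slack in a dimension count.
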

\begin{proof}
Assume this is wrong and let $l\in A_1$ be $W$-generic. Since $\ol W$ contains a $d$-th power, there exist $L\in A_1$ and $g\in A_{d-1}$ such that $L^d+lg\in W$.

Let $s\le k+1$ be the largest integer such that there exist linearly independent linear forms $l_1,\dots,l_s$ that are $W$-generic and such that the $2s$ linear forms $l_1,\dots,l_s,L_1,\dots,L_s$ are linearly independent with $L_i^d+l_ig_i\in W$ ($L_1,\dots,L_s\in A_1$, $g_1,\dots,g_s\in A_{d-1}$).

If $s=k+1$ then after a \cc\ the elements $x_1^d+x_{k+2}g_1,\dots,x_{k+1}^d+x_{2k+2}g_{k+1}$ are contained in $W$ and are linearly dependent since $\dim W=k$. Let $\lambda_1,\dots,\lambda_{k+1}\in\C$ not all zero such that
\[
\sum_{i=1}^{k+1} \lambda_ix_i^d+\sum_{i=1}^{k+1} \lambda_ix_{k+1+i}p_i=0.
\]
For $1\le i\le k+1$ the element $x_i^d$ cannot appear in the second sum, hence both sums are zero. But looking at the first sum, it immediately follows that all $\lambda_i$ are zero.

If $s<k+1$, let $h_1,\dots,h_{k+1}$ be generic linear forms with $H_i^d+h_ip_i\in W$. By assumption we have $H_i\in\spn(l_1,\dots,l_s,L_1,\dots,L_s,h_i)$. By changing $p_i$ we can further assume that $H_i\in\spn(l_1,\dots,l_s,L_1,\dots,L_s)$ for every $i$. Since $\dim W=k$ these $k+1$ forms are linearly dependent and there exist $\lambda_1,\dots,\lambda_{k+1}\in\C$ not all zero such that
\[
\sum_{i=1}^{k+1} \lambda_iH_i^d+\sum_{i=1}^{k+1} \lambda_ih_ip_i=0.
\]
Since $h_1,\dots,h_{k+1}$ are generic and $n\ge 3k+1$, the linear forms $l_1,\dots,l_s,L_1,\dots,L_s,$ $h_1,\dots,h_{k+1}$ are linearly independent. Again after a \cc\ mapping $l_1,\dots,l_s,L_1,\dots,L_s$ to $x_1,\dots,x_{2s}$ and $h_1,\dots,h_{k+1}$ to $x_{2s+1},\dots,s_{2s+k+1}$, the first sum is contained in $\C[x_1,\dots,x_{2s}]$ and the second sum contains no monomial in the first $2s$ variables. It follows that both sums separately have to be zero. Especially
\[
\sum_{i=1}^{k+1} \lambda_ih_ip_i=0.
\]
By assumption not all $\lambda_i$ are zero. If only one was non-zero, say $\lambda_j\neq 0$, then it follows that $h_jp_j=0$, hence $H_j^d\in W$ which is a contradiction. Therefore we can assume that at least two of the $\lambda_i$ are non-zero. Let $j:=\max\{i\colon \lambda_i\neq 0\}$, then $h_jp_j\in\spn(h_1p_1,\dots,h_{j-1}p_{j-1}):= V\neq \{0\}$. This space $V$ does not depend on $h_j$, hence we can assume that $h_j$ is $V$-generic. We have $\dim V\le j-1\le k<n$ which means that $\dim (V:l)=0$ for a generic linear form $l$ by \cref{cor:green_dim(W:l)}. Especially this holds for $h_j$, a contradiction.
\end{proof}

\begin{thm}
\label{thm:independent_bound}
Let $k\le d-1$. Then for every $n\ge 2$ and every \bpf\ subspace $U\subset A(n)_d$ of codimension $k$ we have 
\[
\codim U^2\le m(3k,k,k).
\]
This constant is independent of $n$ and $d$.
\end{thm}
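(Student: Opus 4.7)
The plan is to combine \cref{cor:reduction_number_of_variables} with \cref{cor:ss_bound_independent_of_d}, using the dual viewpoint of \cref{rem:dual} to check the hypotheses. I would split into two cases according to whether $n\ge 3k+1$ or $n\le 3k$.

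Suppose first that $n\ge 3k+1$. I would apply a generic \cc\ to $U$ and set $U':=U\cap A(n-1)_d$, then verify the hypotheses of \cref{cor:reduction_number_of_variables}. Writing $W=U^\perp$ and $\ol W:=(W+\id{x_n}_d)/\id{x_n}_d$, \cref{lem:dual} translates $\codim_{A(n-1)_d}U'=k$ into $\dim\ol W=\dim W$, which follows from \cref{prop:green_application_dimW} since $k<n$. The same lemma translates base-point freeness of $U'$ into the absence of $d$-th powers of linear forms in $\ol W$; via the apolarity pairing, $U$ being \bpf\ is equivalent to $W$ containing no such $d$-th power, and \cref{prop:stay_bpf} (which applies since $n\ge 3k+1$) then yields the corresponding fact for $\ol W$. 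Hence $U'\subset A(n-1)_d$ is \bpf\ of codimension $k$, and \cref{cor:reduction_number_of_variables} gives $\codim U^2\le \codim (U')^2$. Iterating this one-variable step as long as the current ambient dimension is at least $3k+1$, after finitely many iterations I reach a \bpf\ subspace $\widetilde U\subset A(3k)_d$ of codimension $k$ with $\codim U^2\le \codim \widetilde U^2\le m(3k,d,k)$, and \cref{cor:ss_bound_independent_of_d} (applicable since $k\le d$) yields $\codim \widetilde U^2\le m(3k,k,k)$.

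In the second case, $n\le 3k$, I would apply \cref{cor:ss_bound_independent_of_d} directly to obtain $\codim U^2\le m(n,d,k)\le m(n,k,k)$, and then use \cref{prop:liftingfaces} to pass from a subspace $V\subset A(n)_k$ realizing $m(n,k,k)$ to the lift $V^{(3k-n)}\subset A(3k)_k$; part (ii) of that proposition gives $\codim V^2\le \codim (V^{(3k-n)})^2\le m(3k,k,k)$, which combines to the desired inequality.

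The main obstacle is preserving the \bpf\ property through the one-variable reduction in the first case: intersecting a \bpf\ subspace with a hyperplane can in general create a base point, as the example just before \cref{prop:stay_bpf} shows. The bound $n\ge 3k+1$ in \cref{prop:stay_bpf} is exactly what rules out this pathology, which is why the constant $3k$ appears in the statement.
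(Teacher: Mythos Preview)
Your proof is correct and follows essentially the paper's approach: reduce $n$ to $3k$ via \cref{prop:stay_bpf} and \cref{cor:reduction_number_of_variables}, then reduce $d$ to $k$ via \cref{cor:ss_bound_independent_of_d}. The only minor variation is in the case $n\le 3k$: the paper lifts $U$ itself to $3k+1$ variables (using $h_{2d-1}=0$ from \cref{cor:degree2d-1} so that \cref{prop:liftingfaces}(ii) gives an equality) and thereby reduces to the large-$n$ case, whereas you bound $\codim U^2\le m(n,k,k)$ directly and then establish $m(n,k,k)\le m(3k,k,k)$ by lifting an extremal subspace---both arguments are short and equivalent.
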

\begin{proof}
If $n\le 3k$ and $U\subset A_d$ is a \bpf\ subspace of codimension $k$ then by \cref{prop:liftingfaces} we have $\codim U^2=\codim (U^{(3k-n+1)})^2$. It is therefore enough to only consider the case $n > 3k$.

Apply a generic \cc\ to $U$, then by \cref{cor:dim_intersection} if follows that $V:=U\cap A(3k)_d$ has codimension $k$ in $A(3k)_d$ and by \cref{prop:stay_bpf} the subspace $V$ is still \bpf. 

It follows from \cref{cor:reduction_number_of_variables} that 
\[
\codim U^2\le \codim V^2 \le m(3k,d,k).
\]
By \cref{cor:ss_bound_independent_of_d} we have $m(3k,d,k)\le m(3k,k,k)$.
\end{proof}

\begin{rem}
\label{rem:growth_bound}
As we have noted earlier, we do not know the value or any reasonable bound for $m(3k,k,k)$. However it seems very likely that we have $m(3k,k,k)=2k^2+\frac{1}{6}(k^3+3k^2+2k)$ for all $k$: If $U$ is a (strongly stable) subspace such that $\codim U^2=m(3k,k,k)$, then we intersect with $A(k)_d$. With $U':=U\cap A(k)_d$ we then have $\codim U^2\le 2k^2+\codim (U')^2$ by \cref{cor:reduction_number_of_variables} and $\codim (U')^2\le m(k,k,k)$.
The difficulty is to find a bound for $m(k,k,k)$. It seems like $m(k,k,k)$ is always equal to $\frac{1}{6}(k^3+3k^2+2k)$ which is the value of $\codim U^2$ for $U=\spn(x_1^d,x_1^{d-1}x_2,\dots,x_1^{d-1}x_n)^\perp$. However, we can not prove that this holds.
\end{rem}

\begin{rem}
Assuming that $d$ is big enough is essential. Consider the following example: Let $R=A(4), \mm=\id{x_5,\dots,x_n}\subset A(n), n\ge 5$ and
\[
U=\spn(x_1^3,x_2^3,x_3^3,x_4^3,x_1^2x_2+x_3^2x_4)\subset A(4)_3.
\]
This subspace is generated by an almost complete intersection and is of codimension 15. The Hilbert function of $\id{U}$ is $(1,4,10,15,15,7,1)$. Define the subspace
\[
V=U^{(n-4)}=\bigoplus_{i=1}^3 \mm^i R_{3-i}\oplus U\subset A(n).
\]
This subspace also has codimension 15 in $A(n)$ and also has the "same" Hilbert function by \cref{prop:liftingfaces}. Again by \cref{prop:liftingfaces} we know that $\codim V^2=\codim U^2+7\cdot (n-4)$.

This shows that we cannot have a uniform bound for this combination of codimension and degree.
\end{rem}

It seems likely that one cannot only reduce to $3k$ variables in \cref{prop:stay_bpf} but actually to $k+1$ variables. This is at least the only counterexample we know of (for $k\le n-1$).

\begin{conj}
\label{conj:bpf_intersection}
Let $k\le d-1,n-1$ and $n\ge 3$. Let $W\subset A_d$ be a subspace of dimension $k$ and suppose that $W$ contains no $d$-th power of a linear form. Then for a generic linear form $l\in A_1$ it holds that either
\begin{enumerate}
\item $\ol W$ contains no power of a linear form, or
\item $n=k+1$ and $W=L_1^{d-1}\C[L_2,\dots,L_{k+1}]_1$ for some basis $L_1,\dots,L_{k+1}$ of $A(k+1)_1$.
\end{enumerate}
\end{conj}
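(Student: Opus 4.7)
The plan is to sharpen the argument of \cref{prop:stay_bpf} from the regime $n\ge 3k+1$ down to the tight regime $n\ge k+1$, with case (ii) emerging as the unique configuration where the naive contradiction fails. Proceed by contradiction: assume that $\ol W$ contains a $d$-th power for a generic $l\in A_1$, and that $W$ is not of the form in (ii).

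As in \cref{prop:stay_bpf}, let $s$ be maximal such that there exist $W$-generic forms $l_1,\dots,l_s\in A_1$ and $L_1,\dots,L_s\in A_1$, with $l_1,\dots,l_s,L_1,\dots,L_s$ linearly independent and $L_i^d+l_ig_i\in W$ for some $g_i\in A_{d-1}$. Note that in the tight regime $n\le 2k+1$ the case $s=k+1$ is automatically excluded (since $2s\le n$), so we may assume $s\le k$. Pick $k+1$ further generic forms $h_1,\dots,h_{k+1}\in A_1$ with corresponding elements $H_i^d+h_ip_i\in W$. Maximality of $s$ forces $H_i\in\spn(l_1,\dots,l_s,L_1,\dots,L_s,h_i)$, and the $k+1$ elements $H_i^d+h_ip_i$ in the $k$-dimensional space $W$ satisfy a non-trivial relation $\sum_{i=1}^{k+1}\lambda_i(H_i^d+h_ip_i)=0$. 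When $n\ge 3k+1$, the $h_i$ can be chosen linearly independent from all $l_j,L_j$, which cleanly separates the relation into a pure $d$-th-power part and an $h_i$-multiple part; in the tight regime the $h_i$ must overlap with $\spn(l_j,L_j)$ and this separation breaks down.

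The new input is a careful analysis of this overlap. Write $H_i=\alpha_ih_i+H_i'$ with $H_i'\in\spn(l_j,L_j)$, and decompose $h_i=h_i^*+h_i^\perp$ with $h_i^*\in\spn(l_j,L_j)$ and $h_i^\perp$ in a fixed complementary subspace of $A_1$. Expanding $H_i^d$ via the multinomial theorem and absorbing all $h_i^\perp$-divisible terms on the right-hand side, the relation reduces, modulo the ideal $\langle h_1^\perp,\dots,h_{k+1}^\perp\rangle$, to a polynomial identity in $\C[l_1,\dots,l_s,L_1,\dots,L_s]_d$ of the form
\[
\sum_{i=1}^{k+1}\lambda_i\bigl(\alpha_ih_i^*+H_i'\bigr)^d=0.
\]
This is a length-$(k+1)$ Waring relation in at most $2(k+1)$ variables in degree $d\ge k+1$. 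A non-defectivity argument for the secant variety of the Veronese $\nu_d(\P(\spn(l_j,L_j)))$ forces either all $\lambda_i=0$, contradicting non-triviality, or a degenerate configuration in which the summands $\alpha_ih_i^*+H_i'$ are (up to scalars) concentrated on a single linear form $L$. Tracing this back through $H_i^d+h_ip_i\in W$ and $\dim W=k$, one obtains $W=L^{d-1}V$ for some $k$-dimensional $V\subset A_1$ with $L\notin V$; the $\bpf$-type hypothesis that $W$ contains no $d$-th power, combined with the analysis of how $\ol W$ acquires a $d$-th power modulo $l$, then forces $n=k+1$ and pins down the structure of case (ii).

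The main obstacle is the Waring-rigidity step: one needs a sharp statement that a length-$(k+1)$ Waring relation in a generic configuration of at most $2(k+1)$ linear forms and degree $d\ge k+1$, with summands constrained by the overlap decomposition $\alpha_ih_i^*+H_i'$, admits only the trivial solution or the rigid degeneration above. A possibly cleaner alternative is to set up the incidence variety $Z=\{(l,[L])\in\P(A_1)\times\P(A_1)\colon \overline{L^d}\in\ol W\}$: the hypothesis forces its first projection to be dominant, and a careful dimension count combined with the no-$d$-th-power constraint on $W$ should single out the vertical-fiber configuration $Z\supset\{[L_1]\}\times\P(A_1)$, which is precisely case (ii).
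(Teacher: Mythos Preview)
This statement is a \emph{conjecture} in the paper, not a theorem; the paper gives no proof and records only that it holds for $n\ge 3k+1$ (via \cref{prop:stay_bpf}) and for $k=1$. So there is nothing in the paper to compare against, and your proposal would, if correct, settle an open problem. It does not.

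The reduction modulo $\langle h_1^\perp,\dots,h_{k+1}^\perp\rangle$ does not yield what you claim. With $h_i=h_i^*+h_i^\perp$ and $h_i^*\in\spn(l_j,L_j)$, the term $h_ip_i$ reduces to $h_i^*\,\bar p_i$, not to zero; since $h_i^*\neq 0$ for generic $h_i$ whenever $s\ge 1$, an uncontrolled remainder $\sum_i\lambda_ih_i^*\bar p_i$ survives and you do not obtain the pure Waring relation $\sum_i\lambda_i(\alpha_ih_i^*+H_i')^d=0$ in $\C[l_1,\dots,l_s,L_1,\dots,L_s]_d$. Even granting that relation, the ``non-defectivity argument'' you invoke is precisely the missing ingredient and is asserted rather than proved: a nontrivial length-$(k+1)$ relation among $d$-th powers in at most $2k$ variables need not force all summands to be proportional to a single $L^d$, and you supply no identifiability statement tailored to the specific genericity of the $h_i$. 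The further passage from ``all $H_i$ proportional to one $L$'' to ``$W=L^{d-1}V$'' is likewise unjustified, since the elements $c_i^dL^d+h_ip_i$ constrain only $k+1$ particular members of $W$, not all of $W$. The incidence-variety alternative at the end is a heuristic (``should single out'') with no dimension count actually carried out. In short, you have correctly located where the argument of \cref{prop:stay_bpf} breaks below $n=3k+1$, but the proposal does not close that gap.
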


This would allow us to show $\codim U^2\le m(k,k,k)$ in \cref{thm:independent_bound} with an additional argument.

\begin{rem}
\begin{enumerate}[leftmargin=0.6cm]
\item[]
\item Note that the conjecture is certainly false if $n=k$ is allowed: For $n=k$ let $W=x_1^{d-1}\C[x_2,\dots,x_n]_1\oplus \spn(p)$ for some generic $p\in A_d$. Since $p$ is generic $W$ contains no $d$-th powers, but $\ol W\cong x_1^{d-1}\C[x_1,\dots,x_{n-1}]_1\oplus\spn(\ol p)$ hence does contain one.
\item The conjecture is true for $n\ge 3k+1$ by \cref{prop:stay_bpf}. For $k=1$ it also follows from a simple geometric observation: If $W=\spn(p)$ and $p$ is not a power of a linear form, then $\sz(p)$ is non-degenerate. Hence the same holds for a generic hyperplane section and thus it cannot be a power of a linear form.
\end{enumerate}
\end{rem}

\begin{rem}
For faces of \gsa\ this can also be interpreted in the following way, comparing singular to \nsingular\ sos forms: Let $k\le d-1$. For any $n$ let $f\in\Sigma_{n,2d}$ be a \nsingular\ form with $F\subset\gram(f)$ a face of rank $\dim A_d-k$, and let $g\in\Sigma_{n,2d}$ be a (singular) form such that $\gram(g)$ has a face $F'$ with corresponding subspace $U=\mathcal{U}(F')$ that realizes $m(n,d,k)$.

If $n$ tends to infinity the same holds for $\dim F'-\dim F\ge m(n,d,k)-m(3k,d,k)$. So the dimensional differences between faces of a fixed rank between singular and \nsingular\ forms are arbitrarily large.

Thus when trying to understand generic \gsa\ it is essential to exclude singular forms, which in our situation means requiring subspaces to be \bpf.
\end{rem}

\bibliographystyle{plain}
\bibliography{mybib.bib}
\end{document}